\definecolor{lightgrey}{rgb}{0.9,0.9,0.9}
\definecolor{mygray}{rgb}{0.6,0.6,0.6}
\newtheorem{theorem}{Theorem}
\newtheorem{lemma}[theorem]{Lemma}
\newtheorem{proposition}[theorem]{Proposition}
\newtheorem{remark}[theorem]{Remark}
\newtheorem{corollary}[theorem]{Corollary}
\newtheorem{definition}[theorem]{Definition}
\newtheorem{fact}[theorem]{Fact}
\newtheorem{assumption}[theorem]{Assumption}
\title[Accelerated Riemannian Optimization: Handle Constraints \& Bound Geometric Penalties]{Accelerated Riemannian Optimization: Handling Constraints with a Prox to Bound Geometric Penalties}
\newif\ifTODO 
\newcommand{\pa}[1]{\left( #1\right)} 
\newcommand{\norm}[1]{\| #1 \|} 
\newcommand{\abs}[1]{\lvert #1 \rvert}
\newcommand*\circledaux[1]{\tikz[baseline=(char.base)]{
    \node[shape=circle,draw,inner sep=0.8pt] (char) {#1};}}
\NewDocumentCommand{\circled}{m o }{%
    \IfNoValueTF{#2}{\circledaux{#1}}{\stackrel{\circledaux{#1}}{#2}}%
}
\newcommand{\defi}{\stackrel{\mathrm{\scriptscriptstyle def}}{=}}
\renewcommand*\R{\mathbb{R}}
\let\epsilon\varepsilon
\newcommand{\cmark}{\ding{51}}
\newcommand{\xmark}{\ding{55}}%
\newcommand{\yesmark}{{\color{green} \cmark}}
\newcommand{\nomark}{{\color{red} \xmark}}
\DeclareMathOperator*{\argmin}{arg\,min}
\renewcommand\paragraph{\@startsection{paragraph}{4}{\z@}%
                                    {0ex \@plus0.5ex \@minus.2ex}%
                                    {-1em}%
                                    {\normalfont\normalsize\bfseries}}
\newcommand{\innp}[1]{\langle #1 \rangle}
\newcommand{\bigo}[1]{O( #1 )}
\newcommand{\bigol}[1]{O\left( #1 \right)}
\newcommand\blfootnote[1]{%
  \begingroup
  \renewcommand\thefootnote{}\footnote{#1}%
  \addtocounter{footnote}{-1}%
  \endgroup
}
\DeclareCiteCommand{\parencite}[\mkbibparens]
  {\usebibmacro{prenote}}
  {\usebibmacro{citeindex}%
    \printtext[bibhyperref]{\usebibmacro{cite}}}
  {\multicitedelim}
  {\usebibmacro{postnote}}
\DeclareCiteCommand*{\parencite}[\mkbibparens]
  {\usebibmacro{prenote}}
  {\usebibmacro{citeindex}%
    \printtext[bibhyperref]{\usebibmacro{citeyear}}}
  {\multicitedelim}
  {\usebibmacro{postnote}}
\DeclareCiteCommand{\footcite}[\mkbibfootnote]
  {\usebibmacro{prenote}}
  {\usebibmacro{citeindex}%
  \printtext[bibhyperref]{ \usebibmacro{cite}}}
  {\multicitedelim}
  {\usebibmacro{postnote}}
\DeclareCiteCommand{\footcitetext}[\mkbibfootnotetext]
  {\usebibmacro{prenote}}
  {\usebibmacro{citeindex}%
   \printtext[bibhyperref]{\usebibmacro{cite}}}
  {\multicitedelim}
  {\usebibmacro{postnote}}
\global\boolfalse{cbx:parens}}
\algnewcommand{\lst}{\texttt{lst}}
\algnewcommand{\slst}{\texttt{slst}}
\algnewcommand{\SEND}{\textbf{send}}
\newsavebox{\algleft}
\newsavebox{\algright}
\newcounter{algorithmicH}
\let\oldalgorithmic\algorithmic
\renewcommand{\algorithmic}{%
  \stepcounter{algorithmicH}
  \oldalgorithmic}
\renewcommand{\theHALG@line}{ALG@line.\thealgorithmicH.\arabic{ALG@line}}
\begin{document}

\maketitle

\blfootnote{\href{https://openreview.net/forum?id=UPZCt9perOn}{\normalcolor First circulated in May 2022.}}
\blfootnote{Most of the notations in this work have a link to their definitions. For example, if you click or tap on any instance of $\xastg$, you will jump to the place where it is defined as the global minimizer of the function we consider in this work.}

\begin{abstract}
    We propose a globally-accelerated, first-order method for the optimization of smooth and (strongly or not) geodesically-convex functions in a wide class of Hadamard manifolds. We achieve the same convergence rates as Nesterov's accelerated gradient descent, up to a multiplicative geometric penalty and log factors.  
    Crucially, we can enforce our method to stay within a compact set we define. Prior fully accelerated works \textit{resort to assuming} that the iterates of their algorithms stay in some pre-specified compact set, except for two previous methods of limited applicability. For our manifolds, this solves the open question in \citep{kim2022accelerated} about obtaining global general acceleration without iterates assumptively staying in the feasible set.

In our solution, we design an accelerated Riemannian inexact proximal point algorithm, which is a result that was unknown even with exact access to the proximal operator, and is of independent interest. For smooth functions, we show we can implement the prox step inexactly with first-order methods in Riemannian balls of certain diameter that is enough for global accelerated optimization.
\end{abstract}

\clearpage
\tableofcontents

\section{Introduction}\label{sec:introduction}

Riemannian optimization concerns the optimization of a function defined over a Riemannian manifold. It is motivated by constrained problems that can be naturally expressed on Riemannian manifolds allowing to exploit the geometric structure of the problem and effectively transforming it into an unconstrained one. Moreover, there are problems that are not convex in the Euclidean setting, but that when posed as problems over a manifold with the right metric, are convex when restricted to every geodesic, and this allows for fast optimization \citep{neto2006convex, bento2012subgradient, bento2015proximal, allen2018operator}. That is, they are geodesically convex (g-convex) problems, cf. \cref{def:g-convex_smooth}. Some applications of Riemannian optimization in machine learning robust covariance estimation in Gaussian distributions \citep{wiesel2012geodesic}, Gaussian mixture models \citep{hosseini2015matrix}, operator scaling \citep{allen2018operator}, computation of Brascamp-Lieb constants \citep{bennett2008brascamp}, Karcher mean \citep{zhang2016fast}, Wasserstein Barycenters \citep{weber2017frank}, include dictionary learning \citep{cherian2016riemannian,sun2016complete}, low-rank matrix completion \citep{DBLP:journals/siamsc/CambierA16,heidel2018riemannian,mishra2014r3mc,tan2014riemannian,vandereycken2013low}, optimization under orthogonality constraints \citep{edelman1998geometry,DBLP:conf/icml/CasadoM19}, and sparse principal component analysis \citep{genicot2015weakly,huang2019riemannian,jolliffe2003modified}. The first seven problems are defined  over Hadamard manifolds, which we consider in this work, and the first six are g-convex problems to which our results can be applied. In fact, the optimization in these cases is over symmetric spaces, which satisfy a property that one instance of our algorithm requires, cf. \cref{thm:instantiation}.

Riemannian optimization, whether under g-convexity or not, is an extensive and active area of research, for which one aspires to develop Riemannian optimization algorithms that share analogous properties to the more broadly studied Euclidean methods, such as the following kinds of Riemannian first-order methods: deterministic \citep{bento2017iteration,wei2016guarantees,zhang2016first}, adaptive \citep{kasai2019riemannian}, projection-free \citep{weber2017frank,weber2019nonconvex}, saddle-point-escaping \citep{criscitiello2019efficiently,sun2019escaping,zhou2019faster, criscitiello2020accelerated}, stochastic \citep{hosseini2019alternative,khuzani2017stochastic,tripuraneni2018averaging}, variance-reduced \citep{sato2017riemannian,kasai2018riemannian,zhang2016fast}, and min-max methods \citep{zhang2022minimax, jordan2022first}, among others. 

Riemannian generalizations to accelerated convex optimization are appealing due to their better convergence rates with respect to unaccelerated methods, specially in ill-conditioned problems. Acceleration in Euclidean convex optimization is a concept that has been broadly explored and has provided many different fast algorithms. A paradigmatic example is Nesterov's Accelerated Gradient Descent (\newtarget{def:acronym_accelerated_gradient_descent}{\AGD{}}), cf. \citep{nesterov1983method}, which is considered the first general accelerated method, where the conjugate gradients method can be seen as an accelerated predecessor in a more limited scope \citep{martinez2021acceleration}. There have been recent efforts to better understand this phenomenon in the Euclidean case \citep{allen2014linear,su2014differential,drori2014performance,wibisono2016variational, diakonikolas2017approximate,joulani2020simpler}, which have yielded some fruitful techniques for the general development of methods and analyses. These techniques have allowed for a considerable number of new results going beyond the standard oracle model, convexity, or beyond first-order, in a wide variety of settings \citep{tseng2008accelerated, beck2009fista,wang2015unified,DBLP:conf/stoc/ZhuO15,allen2016katyusha, allen2017natasha, carmon2017convex, diakonikolas2017accelerated, hinder2019near, DBLP:conf/colt/GasnikovDGVSU0W19, ivanova2021adaptive, kamzolov2020near, criado2021fast}, among many others. There have been some efforts to achieve acceleration for Riemannian algorithms as generalizations of \AGD{}, cf. \cref{sec:related_work}. These works try to answer the following fundamental question:

\begin{center}
\textit{Can a Riemannian first-order method enjoy the same rates of convergence as Euclidean \AGD{}?}
\end{center}

The question is posed under (possibly strongly) geodesic convexity and smoothness of the function to be optimized. And due to the lower bound in \citep{criscitiello2022negative}, we know the optimization must be under bounded geodesic curvature of the Riemannian manifold, and we might have to optimize over a bounded domain.

\paragraph{Main results} In this work, we study the question above in the case of finite-dimensional Hadamard manifolds $\M$ of bounded sectional curvature and provide an instance of our framework for a wide class of Hadamard manifolds. For a function $f:\M\to\R$ with a global minimizer at $\xastg$, let $\xInit \in \M$ be an initial point and $\RR$ be an upper bound on the distance $\dist(\xInit, \xastg)$. If $f$ is differentiable, $L$-smooth, and (possibly $\mu$-strongly) g-convex in a closed ball of center $\xastg$ and radius $\bigo{\RR}$, our algorithms obtain the same rates of convergence as \AGD{}, up to logarithmic factors and up to a geometric penalty factor, cf. \cref{thm:instantiation}. See \cref{table:comparisons:riemannian} for a succint comparison among accelerated algorithms and their rates. This algorithm is a consequence of the general framework we design:

\emph{Riemacon: A general accelerated Riemannian scheme.} We design a Riemannian accelerated inexact proximal point method that enjoys the same rates as the Euclidean accelerated proximal point method when approximating $\min_{x\in\X}\f(x)$, up to logarithmic factors and up to a geometric penalty factor, where $\f :\NN \subset \M  \to \R$ is a  g-convex (or strongly g-convex) function in $\X \subset\NN$, cf. \cref{thm:g_convex_acceleration}. Note $\f$ does not need to be smooth, provided access to the inexact prox step.

For differentiable and smooth functions, we show that with access to a (not necessarily accelerated) constrained linear subroutine for strongly g-convex and smooth problems, we can inexactly solve the proximal subproblem from a warm-start point to enough accuracy so it can be used in our accelerated outer loop, in the spirit of other Euclidean algorithms like Catalyst \citep{lin2017catalyst}. After building this machinery, we show that we are able to implement an inexact ball optimization oracle, cf. \citep{carmon2020acceleration}, as an instance of our solution. Crucially, the diameter $\D$ of this ball depends on $\RR$ and the geometry only, so in particular it is independent on the condition number of $\f$. We can use the linearly convergent algorithm in \citep{criscitiello2022negative} for the implementation of the prox subroutine and we show that iterating the application of the ball optimization oracle leads to global accelerated convergence. We also define a convex Euclidean projection oracle, and show that it allows to implement another subroutine that enjoys better geometric penalties.

Importantly, our algorithms obtain acceleration without an undesirable assumption that most previous works had to make: that the iterates of the algorithm stay inside of a \emph{pre-specified} compact set without any mechanism for enforcing or guarateeing this condition. This condition is not the same as assuming the iterates are bounded in \emph{some} compact set, see \hyperlink{sec:handling_constraints_to_bound_geometric_penalties}{this discussion}. All methods require some constraints to bound geometric penalties but to the best of our knowledge only two previous methods are able to enforce these constraints, and they apply to the limited settings of local optimization \citep{criscitiello2022negative} and constant sectional curvature manifolds \citep{martinez2021acceleration}, respectively. Techniques in the rest of papers resort to just assuming that the iterates of their algorithms are always feasible. Removing this condition in general, global, and fully accelerated methods was posed as an open question in \citep{kim2022accelerated}, that we solve for a wide class of Hadamard manifolds. The difficulty of constraining problems in order to bound geometric penalties as well as the necessity of achieving this goal in order to provide full optimization guarantees with bounded geometric penalties is something that has also been noted in other kinds of Riemannian algorithms, cf. \citep{hosseini2020recent}.

The question concerning whether there are Riemannian analogs to Nesterov's algorithm that enjoy similar rates is a question that, to the best of our knowledge, was first formulated in \citep{zhang2016first}. In particular, since Nesterov's \AGD{} uses a proximal operator of a function's linearization, they ask whether there is a Riemannian analog to this operation that could be used to obtain accelerated rates in the Riemannian case. We show that, instead, a proximal step with respect to the \textit{whole} function can be approximated efficiently in Hadamard manifolds and it can be used along with an accelerated outer loop.
Previously known Riemannian proximal methods either obtain asymptotic analyses, assume exact proximal computation, or work with approximate proximal operators by using different inexactness conditions as ours, and none of them show how to implement the proximal operators or obtain accelerated proximal point methods, cf. \cref{sec:related_work}. 

\begin{table}[h!]
    \centering
    \caption{Convergence rates of related works with provable guarantees for smooth problems over uniquely geodesic manifolds. Column \textbf{K?} $\rightarrow$ sectional curvature?, \textbf{G?} $\rightarrow$ global algorithm?: any initial distance to a minimizer is allowed. Here L and L$'$ mean they are local algorithms that require initial distance $\bigo{(\L /\mu)^{-3/4}}$ and $\bigo{(\L /\mu)^{-1/2}}$, respectively. Column \textbf{F?} $\rightarrow$ full acceleration?: dependence on $\L $, $\mu$, and $\epsilon$ like  AGD  up to possibly log factors. Column \textbf{C?} can enforce some constraints?: All methods require their iterates to be in some \textbf{pre-specified} compact set, but works with \nomark{} just assume the iterates will remain within the constraints. We use ${\protect\W} \defi \sqrt{\frac{\L }{\mu}}\log(\frac{\L \RR^2}{\epsilon})$. $^*$A mild condition on the covariant derivative of the metric tensor is required, cf. \cref{assump:bounded_curvature_tensor}, \cref{sec:proofs_ball_oracle}. $^{**}$With access to the convex projection oracle in \cref{sec:other_subroutine}. } 
    \label{table:comparisons:riemannian} 
\begin{tabular}{lllccccc} 
    \toprule
    \textbf{Method}   &  \textbf{g-convex} & \textbf{$\mu$-st. g-cvx} & \textbf{K?} & \textbf{G?} & \textbf{F?}& \textbf{C?}  \\
    \midrule
    \midrule
    \citep[\AGD{}]{nesterov2005smooth}                 & $O(\sqrt{\frac{\L\RR^2}{\epsilon}})$  & $\bigo{\W}$ & $0$ & \yesmark & \yesmark & \yesmark \\
    \midrule
    \citep{zhang2018towards} & -  & $\bigo{\W}$ &  {\footnotesize bounded} & L  & \yesmark & \nomark   \\ 
    \citep{ahn2020nesterov}                          & - & $\bigotilde{\frac{\L}{\mu} + \W}$ &  {\footnotesize bounded} & \yesmark & \nomark & \nomark  \\ 
    
    \citep{martinez2020global}  & $\bigotilde{\zetar^{\frac{3}{2}}\sqrt{\frac{\zetar}{\deltar}+\frac{\L\RR^2}{\deltar\epsilon}}}$ & $\bigotilde{\zetar^{\frac{3}{2}}\cdot\W}$ & {\footnotesize ctant.$\neq 0$} & \yesmark & \yesmark & \yesmark  \\
    \citep{criscitiello2022negative}  & -  & $\bigo{\W}$ &  {\footnotesize bounded$^*$} & L$'$ & \yesmark & \yesmark   \\ 
    \citep{kim2022accelerated}  & $\bigo{\zetar \sqrt{\frac{\L \RR^2}{\epsilon}}}$ & $\bigo{\zetar \cdot \W}$ & {\footnotesize bounded} & \yesmark & \yesmark & \nomark  \\
    
    \textbf{Theorem \ref{thm:instantiation}}  & ${\bigotilde{\zetar^2 \sqrt{\zetar+\frac{\L\RR^2}{\epsilon}}}}$ & $\bigotilde{\zetar^2\cdot \W}$ & {\footnotesize Hadamard$^*$  } & \yesmark & \yesmark & \yesmark \\
    \textbf{Appendix \ref{sec:other_subroutine}}$^{**}$  & ${\bigotilde{\zetar \sqrt{\zetar+\frac{\L\RR^2}{\epsilon}}}}$ & $\bigotilde{\zetar\cdot \W}$ & {\footnotesize Hadamard$^*$  } & \yesmark & \yesmark & \yesmark \\
    \bottomrule
\end{tabular}
\end{table}

\subsection{Preliminaries} \label{sec:preliminaries}

We provide definitions of Riemannian geometry concepts that we use in this work. The interested reader can refer to \citep{petersen2006riemannian, bacak2014convex} for an in-depth review of this topic, but for this work the following notions will be enough. A Riemannian manifold $(\MOnly,\mathfrak{g})$ is a real $C^{\infty}$ manifold $\MOnly$ equipped with a metric $\mathfrak{g}$, which is a smoothly varying, i.e., $C^{\infty}$, inner product. For $x \in \MOnly$, denote by $\newtarget{def:tangent_space}{\Tansp{x}}\MOnly$ the tangent space of $\MOnly$ at $x$. For vectors $v,w  \in \Tansp{x}\MOnly$, we denote the inner product of the metric by $\innp{v,w}_x$ and the norm it induces by $\norm{v}_x \defi \sqrt{\innp{v,v}_x}$. Most of the time, the point $x$ is known from context, in which case we write $\innp{v,w}$ or $\norm{v}$.  

A geodesic of length $\ell$ is a curve $\gamma : [0,\ell] \to \MOnly$ of unit speed that is locally distance minimizing. A uniquely geodesic space is a space such that for every two points there is one and only one geodesic that joins them. In such a case the exponential map $\newtarget{def:riemannian_exponential_map}{\expon{x}} : \Tansp{x}\MOnly\to \MOnly$ and the inverse exponential map $\exponinv{x}:\MOnly\to \Tansp{x}\MOnly$ are well defined for every pair of points, and are as follows. Given $x, y\in\MOnly$, $v\in \Tansp{x}\MOnly$, and a geodesic $\gamma$ of length $\norm{v}$ such that $\gamma(0) =x$, $\gamma(\norm{v})=y$, $\gamma'(0)=v/\norm{v}$, we have that $\expon{x}(v) = y$ and $\exponinv{x}(y) = v$. We denote by $\newtarget{def:distance}{\dist}(x,y)$ the distance between $x$ and $y$, and note that it takes the same value as $\norm{\exponinv{x}(y)}$. The manifold $\MOnly$ comes with a natural parallel transport of vectors between tangent spaces, that formally is defined from a way of identifying nearby tangent spaces, known as the Levi-Civita connection $\nabla$ \citep{levi1977absolute}. We use this parallel transport throughout this work. As all previous accelerated related works do, discussed in \cref{sec:related_work}, we assume that we can compute the exponential and inverse exponential maps, and parallel transport of vectors for our manifold.

Given a $2$-dimensional subspace $V \subseteq \Tansp{x}\MOnly$ of the tangent space of a point $x$, the sectional curvature at $x$ with respect to $V$ is defined as the Gauss curvature, for the surface $\expon{x}(V)$ at $x$. The Gauss curvature at a point $x$ can be defined as the product of the maximum and minimum curvatures of the curves resulting from intersecting the surface with planes that are normal to the surface at $x$. A Hadamard manifold is a complete simply connected Riemannian manifold whose sectional curvature is non-positive, like the hyperbolic space or the space of $n\times n$ symmetric positive definite matrices with the metric $\innp{X, Y}_{A} \defi \operatorname{Tr}(A^{-1}XA^{-1}Y)$ where $X, Y$ are in the tangent space of $A$. Hadamard manifolds are uniquely geodesic. Note that in a general manifold $\expon{x}(\cdot)$ might not be defined for each $v\in \Tansp{x}\MOnly$, but in a Hadamard manifold of dimension $n$, the exponential map at any point is a global diffeomorphism between $\Tansp{x}\MOnly\cong\R^n$ and the manifold, and so the exponential map is defined everywhere. We now proceed to define the main properties that will be assumed on our model for the function to be minimized and on the feasible set $\X$.

\begin{definition}[Geodesic Convexity and Smoothness] \label{def:g-convex_smooth}
    Let $\f:\NN\subset \MOnly \to \R$ be a differentiable function defined on an open set $\NN$ contained in a Riemannian manifold $\MOnly$. Given $\newtarget{def:riemannian_smoothness_of_F}{\L}\geq \newtarget{def:strong_g_convexity_of_F}{\mu} > 0$, we say that $\f$ is \emph{$\L$-smooth} in a set $\X \subseteq \NN$ if for any two points $x, y \in \X$, $\f$ satisfies
    \[
        \f(y) \leq \f(x) + \innp{\nabla \f(x), \exponinv{x}(y)} + \frac{\L}{2}\dist(x,y)^2. 
    \]
    
    Analogously, we say that $\f$ is \emph{$\mu$-strongly g-convex} in $\X$, if for any two points $x, y \in \X$, we have
    \[
    \f(y) \geq \f(x) + \innp{\nabla \f(x), \exponinv{x}(y)} + \frac{\mu}{2}\dist(x,y)^2.
    \] 
    If the previous inequality is satisfied with $\mu=0$, we say the function is \emph{g-convex} in $\X$. If $\f$ is not differentiable, we say $\f$ is $\mu$-strongly g-convex in $\X$ if for all $x, y \in \X$, and $t\in[0,1]$:
\[
    f(\expon{x}(t\cdot\exponinv{x}(x) + (1-t)\cdot\exponinv{x}(y))) \leq tf(x) + (1-t)f(y) - \frac{t(1-t)\mu}{2}d(x,y)^2.
\] 
Again, if the inequality is satisfied for $\mu=0$, we have g-convexity in $\X$. This definition coincides with the previous one when $\f$ is differentiable.
\end{definition}

We present the following fact about the squared-distance function, when one of the arguments is fixed. The constants $\zeta{\D}$, $\delta{\D}$ below appear everywhere in Riemannian first-order optimization methods because, among other things, \cref{fact:hessian_of_riemannian_squared_distance} yields Riemannian inequalities that are analogous to the equality in the Euclidean cosine law of a triangle, cf. \cref{lemma:cosine_law_riemannian}, and these inequalities have wide applicability in the analyses of Riemannian methods. 

\begin{fact}[Local information of the squared-distance]\label{fact:hessian_of_riemannian_squared_distance}
    Let $\MOnly$ be a Riemannian manifold of sectional curvature bounded by $[\newtarget{def:minimum_sectional_curvature}{\kmin}, \newtarget{def:maximum_sectional_curvature}{\kmax}]$ that contains a uniquely g-convex set $\X\subset \MOnly$ of diameter $\D<\infty$. Then, given $x,y\in\X$ we have the following for the function $\newtarget{def:squared_distance_function_div_by_2}{\Phi_x}:\M \to\R$, $y \mapsto \frac{1}{2}\dist(x, y)^2$:
\[
    \nabla \Phi_x(y) = -\exponinv{y}(x)\text{\quad \quad and \quad \quad} \delta{\D}\norm{v}^2 \leq \Hess  \Phi_x(y)[v, v] \leq \zeta{\D} \norm{v}^2.
\] 
These bounds are tight for spaces of constant sectional curvature. The geometric constants are
\begin{equation*}\label{eq:defi_zeta_and_delta}
    \newtarget{def:zeta}{\zeta{\D}} \defi 
        \begin{cases} 
            \D\sqrt{\abs{\kmin}}\coth(\D\sqrt{\abs{\kmin}})& \text{ if } \kmin \leq 0 \\
            1 &  \text{ if } \kmin > 0\\
        \end{cases}%
        ,
\end{equation*}
and 
\begin{equation*}\label{eq:defi_delta}
        \newtarget{def:delta}{\delta{\D}} \defi 
        \begin{cases} 
            1 & \text{ if }\kmax \leq 0\\
            \D\sqrt{\kmax}\cot(\D\sqrt{\kmax})& \text{ if } \kmax > 0 \\
        \end{cases}.
\end{equation*}
    Consequently, $\Phi_x$ is $\delta{\D}$-strongly g-convex and $\zeta{\D}$-smooth in $\X$. See \citep{kim2022accelerated}, for instance.  
In particular, for Hadamard manifolds, $\Phi_x$ is $1$-strongly g-convex and sublevel sets of g-convex functions are g-convex sets, so balls are g-convex in these manifolds \citep{bacak2014convex}.
\end{fact}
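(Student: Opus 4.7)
The plan is to prove the gradient formula via the first variation of arc length, and the Hessian bounds via Jacobi field comparison against model spaces of constant sectional curvature.

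First, for the gradient, fix $x \in \X$. For $y \neq x$ in $\X$, unique g-convexity gives a unit-speed geodesic $\gamma: [0, \dist(x,y)] \to \X$ with $\gamma(0)=y$ and $\gamma(\dist(x,y))=x$. The first variation formula of arc length yields $\nabla_y \dist(x,y) = -\gamma'(0)$, so the chain rule gives $\nabla \Phi_x(y) = \dist(x,y)\cdot(-\gamma'(0)) = -\exponinv{y}(x)$; the case $y = x$ follows by continuity since $\Phi_x$ attains its minimum there.

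Second, for the Hessian, I would use Jacobi fields. Given $v \in \Tansp{y}\MOnly$, decompose $v = v_r + v_\perp$ into radial and perpendicular components relative to $\exponinv{y}(x)$. Along the radial direction, direct computation in geodesic normal coordinates gives $\Hess \Phi_x(y)[v_r, v_r] = \|v_r\|^2$, and the Gauss lemma shows the mixed terms vanish, so the task reduces to bounding the perpendicular contribution. Extend $v_\perp$ through a variation of $\gamma$ by geodesics ending at $x$; the variation field is a Jacobi field $J$ along $\gamma$ with $J(0) = v_\perp$ and $J(\ell) = 0$, where $\ell = \dist(x,y)$. The second variation formula expresses $\Hess \Phi_x(y)[v_\perp, v_\perp]$ in terms of $\langle J'(0), v_\perp \rangle$, and evaluating this quantity in the space of constant curvature $K$ gives $\ell\sqrt{|K|}\coth(\ell\sqrt{|K|})\|v_\perp\|^2$ for $K<0$, $\|v_\perp\|^2$ for $K=0$, and $\ell\sqrt{K}\cot(\ell\sqrt{K})\|v_\perp\|^2$ for $K>0$; these are the tight, constant-curvature identities quoted in the statement.

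Third, to transfer these identities to bounds on $\MOnly$, I would apply the Rauch comparison theorem: Jacobi fields in a space whose sectional curvature is bounded above by $\kmax$ are no shorter than those in the model of constant curvature $\kmax$, yielding the lower Hessian bound; the symmetric argument with $\kmin$ gives the upper bound. Monotonicity in $\ell$ of the two expressions above, combined with $\ell \leq \D$, lets one plug in $\D$ to obtain bounds uniform over $\X$, producing $\delta{\D}$ and $\zeta{\D}$ as stated. The Hadamard specialization $\kmax \leq 0$ forces $\delta{\D} = 1$, so $\Phi_x$ is $1$-strongly g-convex, and g-convexity of sublevel sets of g-convex functions (in particular metric balls) is standard \citep{bacak2014convex}. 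The main obstacle is the careful bookkeeping of the Rauch comparison in both directions together with the radial/perpendicular decomposition; in the positive upper-curvature case one must also verify $\D\sqrt{\kmax} < \pi/2$ so that $\cot$ remains positive, which is implicit in the unique g-convexity of $\X$ of diameter $\D$.
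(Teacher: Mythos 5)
Your argument is correct and is the standard one: the paper states this as a \factrefname{} without proof, deferring to \citep{kim2022accelerated}, and the first-variation computation for the gradient together with the radial/perpendicular splitting and Rauch/Hessian comparison against the constant-curvature models is exactly the argument that reference (and the textbook literature) gives. The only caveat worth noting is that unique geodesicity of $\X$ guarantees the comparison is valid (no conjugate points) but does not by itself force $\D\sqrt{\kmax}<\pi/2$; the lower Hessian bound holds regardless, and positivity of $\delta{\D}$ is only needed for the strong-convexity conclusion, which is all that matters in the Hadamard case used in the paper.
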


\subsection{Notation}\label{sec:notation} Let $\newtarget{def:M_manifold}{\M}$ be a uniquely geodesic $\newtarget{def:dimension}{\n}$-dimensional Riemannian manifold. Given points $x, y, z\in \M$, we abuse the notation and write $y$ in non-ambiguous and well-defined contexts in which we should write $\exponinv{x}(y)$. For example, for $v\in \Tansp{x}\M$ we have $\innp{v, y - x} = -\innp{v, x - y} = \innp{v, \exponinv{x}(y)- \exponinv{x}(x)} = \innp{v, \exponinv{x}(y)}$; $\norm{v-y} = \norm{v-\exponinv{x}(y)}$; $\norm{z-y}_x = \norm{\exponinv{x}(z)-\exponinv{x}(y)}$; and $\norm{y-x}_x =\norm{\exponinv{x}(y)} =\dist(y, x)$. We denote by $\newtarget{def:geodesically_convex_feasible_region}{\X}$ a compact, uniquely geodesic g-convex set of diameter $\newtarget{def:diameter_of_geodesically_convex_feasible_region}{\D}$ contained in an open set $\newtarget{def:N_open_set_in_manifold}{\NN} \subset\M$ and we use $\newtarget{def:indicator_function}{\indicator{\X}}$ for the indicator function of $\X$, which is $0$ at points in $\X$ and $+\infty$ otherwise. For a vector $v\in\Tansp{y}\M$, we use $\newtarget{def:parallel_transport}{\Gamma{y}{x}}(v) \in \Tansp{x}\M$ to denote the parallel transport of $v$ from $\Tansp{y}\M$ to $\Tansp{x}\M$ along the unique geodesic that connects $y$ to $x$. We call $\newtarget{def:riemannian_function_f}{\f}:\NN \subset \M\to\R$ a g-convex function we want to optimize. We use $\newtarget{def:accuracy_epsilon}{\epsilon}$ to denote the approximation accuracy parameter, $\xk[0] \in \X$ for the initial point of our algorithms, and $\newtarget{def:initial_distance_for_X}{\RInit} \defi \dist(\xk[0], \xast)$ for the initial distance to an arbitrary constrained minimizer $\newtarget{def:optimizer_at_X}{\xast} \in \argmin_{x\in\X}\f(x)$. We use $\newtarget{def:distance_or_bound_to_global_minimizer}{\RR}$ for an upper bound on the initial distance $\dist(\xInit,\xastg)$ to an unconstrained minimizer $\newtarget{def:global_optimizer}{\xastg}$, if it exists. The big-$O$ notation $\newtarget{def:big_o_tilde}{\bigotilde{\cdot}}$ omits $\log$ factors. 
Note that in the setting of Hadamard manifolds, the bounds on the sectional curvature are $\kmin \leq \kmax \leq 0$. Hence for notational convenience, we define $\newtarget{def:zeta_without_subindex_which_means_zeta_D}{\zetad} \defi \zeta{\D} = \D\sqrt{\abs{\kmin}}\coth(\D\sqrt{\abs{\kmin}}) \geq 1$, $\newtarget{def:delta_without_subindex_which_means_zeta_D}{\deltad} \defi 1$, and similarly $\newtarget{def:zeta_without_subindex_which_means_zeta_RR}{\zetar}\defi \zeta{\RR}$ and $\newtarget{def:delta_without_subindex_which_means_zeta_RR}{\deltar}\defi\delta{\RR}=1$.  If $v\in \Tansp{x}\M$, we use $\newtarget{def:euclidean_projection}{\Pii}{\newtarget{def:closed_ball}{\ball}(0, r)}(v) \in \Tansp{x}\M$ for the projection of $v$ onto the closed ball with center at $0$ and radius $r$.

\section{Algorithmic framework and convergence results}\label{sec:algorithm}
In this section, we present our \textbf{Riema}nnian \textbf{ac}celerated algorithm for \textbf{con}strained g-convex optimization, or \newtarget{def:Riemacon}{\riemacon{}}\footnote{Riemacon rhymes with ``rima con'' in Spanish.}. This is a general framework that we later instantiate to provide a full algorithm. Recall our abuse of notation for points $p \in\M$ to mean $\exponinv{q}(p)$ in contexts in which one should place a vector in $\Tansp{q}\M$ and note that in our algorithm $\xk$ and $\yk$ are points in $\M$ whereas $\zx \in \Tansp{\xk}\M, \zy, \zybar  \in \Tansp{\yk}\M$.

We start with an interpretation of our algorithm that helps understanding its high-level ideas. The following intends to be a qualitative explanation, and we refer to the pseudocode and the appendix for the exact descriptions and analysis. Euclidean accelerated algorithms can be interpreted, cf. \citep{allen2014linear}, as a combination of a gradient descent (\newtarget{def:acronym_gradient_descent}{\GD{}}) algorithm and an online learning algorithm with losses being the affine lower bounds $\f(\xk) + \innp{\nabla \f(\xk), \cdot-\xk}$ we obtain on $\f(\cdot)$ by applying convexity at some points $\xk$. That is, the latter builds a lower bound estimation on $\f$. By selecting the next query to the gradient oracle as a cleverly picked convex combination of the predictions given by these two algorithms, one can show that the instantaneous regret of the online learning algorithm can be compensated by the local progress \GD{} makes, up to a difference of potential functions, which leads to accelerated convergence. In Riemannian optimization, there are two main obstacles. Firstly, the first-order approximations of $\f$ at points $x_k$ yield functions that are affine but only with respect to their respective $\Tansp{x_k}\M$, and so combining these lower bounds that are only simple in their tangent spaces makes obtaining good global estimations not simple. Secondly, when one obtains such global estimations, then one naturally incurs an instantaneous regret that is worse by a factor than is usual in Euclidean acceleration. This factor is a geometric constant depending on the diameter $\D$ of a set $\X$ where the iterates and a (possibly constrained) minimizer lie. As a consequence, the learning rate of \GD{} would need to be multiplicatively increased by such a constant with respect to the one of the online learning algorithm in order for the regret to still be compensated with the local progress of \GD{} (and the rates worsen by this constant). But if we fix some $\X$ of finite diameter, because \GD{}'s learning rate is now larger, it is not clear how to keep the iterates in $\X$. And if we do not have the iterates in one such set $\X$, then our geometric penalties could grow arbitrarily. 

\begin{algorithm}[h!]
    \caption{Riemacon: \textbf{Riema}nnian \textbf{Ac}celeration - \textbf{Con}strained g-Convex Optimization}
    \label{alg:accelerated_gconvex}

\begin{algorithmic}[1] 
    \REQUIRE Feasible set $\X$. Initial point $\newtarget{def:initial_point}{\xInit} \in\X\subset\NN$. Function $\f:\NN\subset\M\to\R$ that is g-convex in $\X$, for a Hadamard manifold $\M$. Parameter $\lambda>0$. Optionally: final iteration $T$ or accuracy $\epsilon$. If $\epsilon$ is provided, compute the corresponding $T$, cf. \cref{thm:g_convex_acceleration}.

    \paragraph{Parameters:} 
    \begin{itemize}
        \item Geometric penalty $\newtarget{def:extra_geom_penalty_xi}{\xi} \defi 4\zeta{2\D} -3 \leq 8\zetad -3 =O(\zetad)$. 
        \item Implicit Gradient Descent learning rate $\newtarget{def:implicit_GD_learning_rate_lambda}{\lambda}$.
        \item Mirror Descent learning rates $\newtarget{def:MD_step_length}{\etak} \defi \ak/\xi$.
        \item Proportionality constant in the proximal subproblem accuracies: $\newtarget{def:loss_factor_in_Lyapunov}{\Deltak} \defi \frac{1}{(k+1)^2}$.
    \end{itemize}
    \paragraph{Definition:} (computation of this value is not needed) 
    \begin{itemize}
        \item Prox. accuracies: $\newtarget{def:accuracy_of_prox_subproblems}{\sigmak} \defi \frac{\Deltak \dist(\xk, \ykast)^2}{78\lambda}$ where $\newtarget{def:optimal_proximal_operator}{\ykast} \defi \argmin_{y\in\X}\{\f(y) + \frac{1}{2\lambda}\dist(\xk, y)^2 \}$.
    \end{itemize}

    \vspace{0.1cm}
    \hrule
    \vspace{0.1cm}
    \State $\yk[0] \gets \sigmak[0]$-minimizer of the proximal problem $\min_{y\in\X}\{\f(y) + \frac{1}{2\lambda}\dist(\xInit, y)^2\}$
    
    \State $\zx[0][0] \gets 0 \in \Tansp{\xInit}\M$;\quad $\zybar[0][0] \gets \zy[0][0] \gets 0 \in \Tansp{\yk[0]}\M$; \quad$\Ak[0] \gets 200\lambda\xi$ 
    \FOR {$k = 1 \textbf{ to } T$}
        \State $\newtarget{def:discrete_step}{\ak[k]} \gets 2\lambda\frac{k+32\xi}{5}$
        \State $\newtarget{def:integral_of_steps}{\Ak[k]} \gets \ak[k]/\xi + \Ak[k-1] = \sum_{i=1}^k \ak[i]/\xi + \Ak[0] = \lambda \left(\frac{k(k+1+64\xi)}{5\xi} + 200\xi\right)$
        \State $\newtarget{def:iterate_x}{\xk} \gets \expon{\yk[k-1]}(\frac{\ak}{\Ak[k-1] + \ak} \zybar[k-1][k-1] + \frac{\Ak[k-1]}{\Ak[k-1] + \ak}\yk[k-1])=  \expon{\yk[k-1]}(\frac{\ak}{\Ak[k-1] + \ak}\zybar[k-1][k-1]) $\Comment{Coupling}  
        
        \State $\newtarget{def:iterate_y}{\yk} \gets$ $\sigmak$-minimizer of $y\mapsto\{\f(y) + \frac{1}{2\lambda}\dist(\xk, y)^2\}$ in $\X$ \label{line:subroutine}\Comment{Approximate implicit \RGD{}}
        \State $\newtarget{def:v_k_sup_x}{\vkx[k]} \gets -\exponinv{\xk}(\yk)/\lambda$  \Comment{Approximate subgradient}
        \State $\newtarget{def:iterate_z_x}{\zx[k]} \gets \exponinv{\xk}(\expon{\yk[k-1]}(\zybar[k-1][k-1])) - \etak \vkx[k]$ \Comment{Mirror Descent step} \label{line:MD_step}
        \State $\newtarget{def:iterate_z_y}{\zy} \gets \Gamma{\xk}{\yk}(\zx) + \exponinv{\yk}(\xk)$ \Comment{Moving the dual point to $\Tansp{\yk}\M$}
        \State $\newtarget{def:iterate_z_y_bar}{\zybar} \gets \Pii{\bar{B}(0, \D)}(\zy) \in \Tansp{\yk}\M$ \Comment{Easy projection done so the dual point is not very far} \label{line:dual_projecting_step}
    \ENDFOR
    \State \textbf{return} $\yk[T]$.
\end{algorithmic}
\end{algorithm}

We find the answer in implicit methods. An implicit Euclidean (sub)gradient descent step is one that computes, from a point $\xk \in \X$, another point $\ykast = \xk - \lambda v_k \in \X$, where $v_k \in \partial(\f + \indicator{\X})(\ykast)$, is a subgradient of $\f+\indicator{\X}$ at $\ykast$. Intuitively, if we could implement a Riemannian version of an implicit \GD{} step then it should be possible to still compensate the regret of the other algorithm and keep all the iterates in the set $\X$. Computing such an implicit step is computationally hard in general, but we show that approximating the proximal objective $\newtarget{def:proximal_objective}{\hk}(y) \defi \f(y) + \frac{1}{2\lambda} \dist(\xk, y)^2$ with enough accuracy yields an approximate subgradient that can be used to obtain an accelerated algorithm as well.
In particular, we provide an accelerated scheme for which we show that the error incurred by the approximation of the subgradient can be bounded by some terms we can control, cf. \cref{lemma:the_suitable_approximate_prox_has_the_property_needed_in_alg1}, namely a small term that appears in our Lyapunov function and also a term proportional to the squared norm of the approximated subgradient, which only increases the final convergence rates by a constant. For $\L$-smooth functions, we provide a warm start in \cref{lemma:warm_start_riemannian_criterion_2} and show that an unaccelerated linearly convergent subroutine initialized at the warm-started point achieves the desired accuracy of the subproblem fast, cf. \cref{remark:RGD_can_be_used_as_subroutine}.
This proximal approach works by exploiting the fact that the Riemannian Moreau envelop is g-convex in Hadamard manifolds \citep{azagra2005inf} and that the subproblem $\hk$, defined with $\lambda=\zeta{2\D}/\L $, is strongly g-convex and smooth with a condition number that only depends on the geometry. For this reason, a local algorithm like the one in \citep{criscitiello2022negative} can be implemented in balls whose radius is independent on the condition number of $\f$. Besides these steps, we use a coupling of the approximate implicit \RGD{} and of a mirror descent (\newtarget{def:acronym_mirror_descent}{\MD{}}) algorithm, along with a technique in \citep{kim2022accelerated} to move dual points to the right tangent spaces without incurring extra geometric penalties, that we adapt to work with dual projections, cf. \cref{lemma:compensated_geometric_penalty}. Importantly, the \MD{} algorithm keeps the dual point close to the set $\X$ by using the projection in Line \ref{line:dual_projecting_step}, which implies that the point $\xk$ is close to $\X$ as well, and this is crucial to keep low geometric penalties. This \MD{} approach is a mix between follow-the-regularized-leader algorithms, that do not project the dual variable, and pure mirror descent algorithms that always project the dual variable. In the analysis, we note that partial projection also works, meaning that defining a new dual point that is closer to all of the points in the feasible set but without being a full projection leads to the same guarantees. Because we use the mirror descent lemma over $\Tansp{\yk}\M$, what we described translates to: we can project the dual $\zy$ onto a ball defined on $\Tansp{\yk}\M$ that contains the pulled-back set $\exponinv{\yk}(\X)$ and by means of that trick we can keep the iterates $\xk$ close to $\X$. And at the same time, the point for which we prove guarantees, namely $\yk$, is always in $\X$.

Finally, under $\L$-smoothness, we instantiate our subroutine with the algorithm in \citep{criscitiello2022negative}, in balls of radius independent on the condition number of $\f$ and show in \cref{thm:instantiation} that if we iterate this approximate implementation of a ball optimization oracle, we obtain convergence at a globally accelerated rate. In \cref{sec:other_subroutine} we provide a warm start that allows any Riemannian constrained linearly convergent algorithm to serve as subroutine for our algorithm and we provide another subroutine that allows to reduce our geometric penalties, provided that one can implement the convex projection operator defined in this section. We note \citep[Thm. 15]{zhang2016first} provided a claimed linearly convergent algorithm for constrained strongly g-convex smooth problems, and thus in principle it could be used for our subroutine after the warm start. Unfortunately, we noticed that the proof is flawed when the optimization is constrained. The first inequality in their proof only holds in general for unconstrained problems and not for projected Riemannian gradient descent, not even for the Euclidean constrained case. Thus, to the best of our knowledge there is no convergence analysis for this metric-projected \RGD{} in this setting. In \cref{sec:RGD_and_proj_RGD} we provide an analysis for this algorithm when the diameter of the feasible set is smaller than a constant ($\zetad<2$) and the global minimizer is inside of the set. We show in \cref{prop:metric_projection_in_balls} that metric projections onto Riemannian balls are simple to compute for uniquely geodesic sets, which implies that both our warm start and projected \RGD{} in the aforementioned setting admit a simple implementation. Finally, we include in \cref{thm:rgd} a result on the convergence of unconstrained \RGD{} with curvature independent rates.

We leave the proofs of most of our results to the appendix and state our main theorems below. Using the insights explained above, we show the following inequality on $\psik$, defined below, that will be used as a Lyapunov function to prove the convergence rates of \cref{alg:accelerated_gconvex}.

\begin{proposition}\label{thm:psi_is_lyapunov}\linktoproof{thm:psi_is_lyapunov}
    By using the notation of \cref{alg:accelerated_gconvex}, let
\[
    \newtarget{def:function_for_Lyapunov_analysis}{\psik} \defi \Ak(\f(\yk)-\f(\xast)) + \frac{1}{2} \norm{\zy-\exponinv{\yk}(\xast)}_{\yk}^2 + \frac{\xi-1}{2} \norm{\zy}_{\yk}^2.
\] 
    Then, for all $k\geq 1$, we have $(1-\Deltak)\psik \leq \psik[k-1]$.
\end{proposition}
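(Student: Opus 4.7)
The plan is to adapt the standard accelerated-gradient-descent Lyapunov template to the Riemannian setting: I would decompose $\psik - \psik[k-1]$ into its function-value, dual-distance, and dual-norm contributions, and control each using the mirror descent three-point identity, the approximate implicit-gradient step, and g-convexity of $\f$ at $\xk$. The coupling rule defining $\xk$ is exactly what makes the linear gradient-like terms produced by these three ingredients cancel simultaneously, in direct analogy with Euclidean \AGD{}. First I would reduce to the projected dual: because $\ball(0,\D)\subset \Tansp{\yk[k-1]}\M$ is closed and convex and contains both $0$ and $\exponinv{\yk[k-1]}(\xast)$ (the latter because $\xast\in\X$ and the diameter of $\X$ is $\D$), the projection in Line~\ref{line:dual_projecting_step} is nonexpansive toward either point, so replacing $\zy[k-1][k-1]$ by $\zybar[k-1][k-1]$ inside $\psik[k-1]$ only decreases its value. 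Proving the inequality with this smaller right-hand side is therefore sufficient, and it is the version compatible with the next iteration, where $\zybar[k-1][k-1]$ is what enters the coupling for $\xk$.

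Next I would invoke the mirror descent three-point identity on the update $\zx = \exponinv{\xk}(\expon{\yk[k-1]}(\zybar[k-1][k-1])) - \etak\vkx[k]$ carried out inside $\Tansp{\xk}\M$; this yields the standard telescoping of the dual-distance term plus a quadratic residual $\tfrac{\etak^2}{2}\|\vkx[k]\|_{\xk}^2$. I would then port the squared-norm terms between $\Tansp{\xk}\M$ and $\Tansp{\yk}\M$ using \cref{lemma:compensated_geometric_penalty}, whose projected-dual variant generates exactly the extra $\tfrac{\xi-1}{2}\|\zy\|_{\yk}^2$ correction that the third summand of $\psik$ was designed to absorb. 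For the function-value part I would combine three ingredients: g-convexity of $\f$ at $\xk$ applied to $\yk[k-1]$ and to $\xast$, weighted by $\Ak[k-1]$ and $\ak/\xi$ respectively; the approximate-prox inequality $\f(\yk)-\f(\xast)\leq \innp{\vkx[k], \exponinv{\xk}(\yk) - \exponinv{\xk}(\xast)} + O(\sigmak)$ from \cref{lemma:the_suitable_approximate_prox_has_the_property_needed_in_alg1}; and the local progress $-\tfrac{\Ak[k-1]\L\lambda^2}{2}\|\vkx[k]\|^2_{\xk}$ produced by smoothness through the identity $\vkx[k]=-\exponinv{\xk}(\yk)/\lambda$. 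The choice $\lambda=\zeta{2\D}/\L$, together with $\ak=\tfrac{2\lambda(k+32\xi)}{5}$ and $\etak=\ak/\xi$, is calibrated so that the MD residual $\tfrac{\etak^2}{2}\|\vkx[k]\|_{\xk}^2$ is dominated by this local-progress term, while the coupling choice for $\xk$ forces all linear $\innp{\vkx[k],\cdot}$ terms to cancel. The inexactness residual, which scales like $\sigmak \propto \Deltak\dist(\xk,\ykast)^2/\lambda$, is precisely what the $\Deltak\psik$ slack on the left-hand side is designed to absorb, since $\dist(\xk,\ykast)^2/\lambda$ is comparable to a constant multiple of $\psik$.

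The main obstacle is the tangent-space bookkeeping around the projection step. One must verify that the projected-dual variant of \cref{lemma:compensated_geometric_penalty} still outputs a compensation of exactly $\tfrac{\xi-1}{2}\|\zy\|_{\yk}^2$ after the partial projection, that this compensation remains valid when a Riemannian cosine-law inequality is invoked in triangles whose vertices straddle $\Tansp{\xk}\M$ and $\Tansp{\yk}\M$, and that nonexpansiveness of the projection toward $\exponinv{\yk[k-1]}(\xast)$ is preserved throughout. Everything else is a faithful Riemannian translation of the Euclidean accelerated-proximal-point algebra, with all geometric penalties absorbed by the explicit factor $\xi=4\zeta{2\D}-3$.
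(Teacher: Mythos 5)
Your overall architecture is the paper's: reduce to the projected dual by nonexpansiveness of the projection onto $\ball(0,\D)\subset\Tansp{\yk[k-1]}\M$ (which contains both $0$ and $\exponinv{\yk[k-1]}(\xast)$), translate the dual potentials between tangent spaces with \cref{lemma:compensated_geometric_penalty} (whose extra $\frac{\xi-1}{2}\norm{\zy}_{\yk}^2$ term is indeed what the third summand of $\psik$ absorbs), run the mirror-descent three-point identity on $\zx$ inside $\Tansp{\xk}\M$, and let the coupling defining $\xk$ cancel the linear terms, with \cref{lemma:inequalities_on_ak} closing the resulting inequality between step sizes. The gap is in the function-value part. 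First, you derive the quadratic progress term from ``smoothness'' of $\f$, but $\f$ is not assumed smooth for this proposition (only the inexact prox is needed), and the step from $\xk$ to $\yk$ is implicit, not a gradient step, so no descent lemma for $\f$ is available. The progress term $\frac{\lambda}{2}\norm{\vkx[k]}^2$ actually comes from the $1/\lambda$-strong g-convexity of $\hk$ on a Hadamard manifold: $\lambda^{-1}\exponinv{\ykast}(\xk)$ is a subgradient of $\f+\indicator{\X}$ at the exact prox point $\ykast$, and transporting that affine lower bound from $\Tansp{\ykast}\M$ to $\Tansp{\xk}\M$ via \cref{lemma:moving_hyperplanes:exact_approachment_recession} (with $\delta{D}=1$) produces the quadratic gain. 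Second, ``g-convexity of $\f$ at $\xk$'' is not a usable ingredient: $\nabla\f(\xk)$ is never computed, and a lower bound linear in $\nabla\f(\xk)$ would not match the loss $\vkx[k]$ fed to the mirror-descent step, so the linear terms would not cancel. The single inequality that must be applied at both $x=\yk[k-1]$ (weight $\Ak[k-1]$) and $x=\xast$ (weight $\ak/\xi$) is the first part of \cref{lemma:the_suitable_approximate_prox_has_the_property_needed_in_alg1}, which packages g-convexity at $\ykast$, the transport, and the quadratic gain together, at the price of the error $\errork(x)$.

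Third, your absorption argument for the inexactness does not hold as stated: $\dist(\xk,\ykast)^2/\lambda$ is not comparable to a constant multiple of $\psik$ (the potential can be arbitrarily small relative to it), so the $\Deltak\psik$ slack alone cannot absorb a term of that size. The paper splits $\errork(x)$ into two pieces. The piece $\hk(\yk)-\hk(\ykast)\leq\sigmak$ is of order $\Deltak\lambda\norm{\vkx[k]}^2$ (using $\dist(\xk,\ykast)\leq\sqrt{2}\,\dist(\xk,\yk)=\sqrt{2}\lambda\norm{\vkx[k]}$) and is absorbed by degrading the quadratic progress from $\frac{\lambda}{2}$ to $\frac{4\lambda}{9}$. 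The cross term $-\frac{1}{\lambda}\innp{\yk-\ykast,\,x-\xk}$ is handled by Cauchy--Schwarz, an exact similar-triangles computation in $\Tansp{\xk}\M$ identifying the relevant weighted combination of $\exponinv{\xk}(\xast)$ and $\exponinv{\xk}(\yk[k-1])$ with $\xi\zx[k-1]$, and Young's inequality, yielding precisely $\frac{\Deltak}{2}\left(\norm{\xast-\zx[k-1]}_{\xk}^2+(\xi-1)\norm{\xk-\zx[k-1]}_{\xk}^2\right)$, i.e., $\Deltak$ times the dual potentials written in $\Tansp{\xk}\M$, which is exactly what the $(1-\Deltak)$ prefactors built into \cref{lemma:compensated_geometric_penalty} are there to cancel. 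Without these three corrections the function-value side of your ledger does not close.
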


With this proposition, we can show the convergence of \riemacon{} for g-convex functions.

\begin{theorem}\label{thm:g_convex_acceleration}\linktoproof{thm:g_convex_acceleration}
    Let $\M$ be a finite-dimensional Hadamard manifold of bounded sectional curvature, and consider $f:\NN\subset\M\to\R$ be a g-convex function in a compact g-convex set $\X \subset\NN$ of diameter $\D$, $\lambda \in \R_{>0}$, $\xast \in \argmin_{x\in\X} \f(x)$, and $\RInit \defi \dist(\xInit, \xast)$. For any $\epsilon > 0$, \cref{alg:accelerated_gconvex} yields an $\epsilon$-minimizer $\yk[T] \in \X$ after $T=\bigo{\zetad\sqrt{\frac{\RInit[2]}{\lambda\epsilon}}}$ iterations. If the function is $\mu$-strongly g-convex then, via a sequence of restarts, we converge in $\bigo{(\zetad\sqrt{\frac{1}{\lambda\mu}}+1)\log(\frac{\mu\RInit[2]}{\epsilon})}$ iterations.
\end{theorem}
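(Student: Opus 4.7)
My plan is to turn the one-step Lyapunov contraction of \cref{thm:psi_is_lyapunov} into a closed-form bound on $\psik[T]$, extract the function-gap term $\Ak[T](\f(\yk[T])-\f(\xast))$ from the definition of the Lyapunov, and obtain the strongly g-convex rate via a standard restart scheme.

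\emph{Telescoping.} With $\Deltak=1/(k+1)^2$, the inequality $(1-\Deltak)\psik\leq \psik[k-1]$ rearranges to $\psik\leq \psik[k-1]\cdot(k+1)^2/(k(k+2))$. Multiplying from $k=1$ to $T$, the product telescopes to $2(T+1)/(T+2)\leq 2$, so $\psik[T]\leq 2\psik[0]$.

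\emph{Bounding $\psik[0]$.} The point $\yk[0]$ is a $\sigmak[0]$-approximate minimizer over $\X$ of $\hk[0](y)=\f(y)+\frac{1}{2\lambda}\dist(\xInit,y)^2$. Evaluating $\hk[0]$ at $\xast$ and using $\dist(\xInit,\xast)\leq\RInit$ yields $\f(\yk[0])-\f(\xast)\leq \RInit[2]/(2\lambda)+\sigmak[0]$, while the $1$-strong g-convexity of $\Phi_{\xInit}$ on Hadamard manifolds (\cref{fact:hessian_of_riemannian_squared_distance}) gives $\dist(\yk[0],\ykast[0])^2\leq 2\lambda\sigmak[0]$ and $\dist(\xInit,\ykast[0])\leq\RInit$, so $\dist(\yk[0],\xast)=O(\RInit)$. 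Combined with $\zy[0][0]=0\in\Tansp{\yk[0]}\M$, $\Ak[0]=200\lambda\xi$, and $\sigmak[0]=\dist(\xInit,\ykast[0])^2/(78\lambda)\leq \RInit[2]/(78\lambda)$, this yields $\psik[0]=O(\xi\RInit[2])=O(\zetad\RInit[2])$.

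\emph{Lower bound on $\Ak[T]$ and the g-convex rate.} Directly from the recursion, $\Ak[T]\geq \lambda T^2/(5\xi)=\Omega(T^2/\L)$ since $\lambda/\xi=\Theta(1/\L)$. Dropping the two nonnegative norm terms in $\psik[T]$ yields $\f(\yk[T])-\f(\xast)\leq 2\psik[0]/\Ak[T]=O(\zetad\L\RInit[2]/T^2)$. Setting this $\leq\epsilon$ is certainly satisfied by $T=\bigo{\zetad\sqrt{\L\RInit[2]/\epsilon}}$, as claimed.

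\emph{Restart for the strongly g-convex case.} At phase $i$, start from $x^{(i)}$ with $R_i=\dist(x^{(i)},\xast)$ and run the g-convex algorithm for $T_0=c\,\zetad\sqrt{\L/\mu}$ iterations, with $c$ large enough that the g-convex bound makes $\f(y^{(i)})-\f(\xast)\leq \mu R_i^2/4$. Then $\mu$-strong g-convexity $\f(y^{(i)})-\f(\xast)\geq \frac{\mu}{2}\dist(y^{(i)},\xast)^2$ forces $R_{i+1}^2\leq R_i^2/2$. After $K=O(\log(\mu\RInit[2]/\epsilon))$ phases the gap is at most $\mu\RInit[2]/(4\cdot 2^K)\leq \epsilon$, giving a total of $KT_0=\bigo{\zetad\sqrt{\L/\mu}\log(\mu\RInit[2]/\epsilon)}$ iterations.

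\emph{Main obstacle.} \cref{thm:psi_is_lyapunov} already performs the real analytical work, so the remaining subtle pieces are (i) carefully bookkeeping the inexactness of the initial prox step when bounding $\psik[0]$ (needed to rule out a hidden blow-up in the constants), and (ii) verifying during the restarts that each phase's initial point still lies in the ball of radius $\bigo{\RInit}$ on which $\f$ is assumed g-convex and $\L$-smooth---automatic here because the $R_i$ decrease monotonically along phases, so the same regularity hypotheses and geometric constants $\zetad$ carry over without modification.
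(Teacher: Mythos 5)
Your proposal is correct and follows essentially the same route as the paper: telescope the contraction from \cref{thm:psi_is_lyapunov} using $\prod_{k=1}^{T}(1-\Deltak)^{-1}\leq 2$, bound $\psik[0]=O(\xi\RInit[2])$, lower-bound $\Ak[T]=\Omega(T^2/\L)$, and handle strong g-convexity by the standard halving-the-distance restart (the paper invokes the reduction of \citep{martinez2020global} but the mechanics are identical to yours). The only (harmless) deviation is that you bound $\f(\yk[0])-\f(\xast)$ by evaluating the inexact prox objective at $\xast$ directly, whereas the paper routes through smoothness of the Riemannian Moreau envelope; your version is marginally tighter and leads to the same stated rate.
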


We note that a straightforward corollary from our results is that if we can compute the exact Riemannian proximal point operator and we use it as the implicit gradient descent step in Line \ref{line:subroutine} of \cref{alg:accelerated_gconvex}, then the method is an accelerated proximal point method. One such Riemannian algorithm was unknown in the literature as well. Note we do not require smoothness of $\f$. 

Finally, we instantiate \cref{alg:accelerated_gconvex} to implement approximate ball optimization oracles in an accelerated way. We show that applying these oracles sequentially leads to global accelerated convergence. Moreover, we show that the iterates do not get farther than $2\RR$ from $\xastg$, which ultimately leads to the geometric penalty being a function of $\zetar$ and not on the condition number of $\f$. For the subroutine in Line \ref{line:subroutine} of \cref{alg:accelerated_gconvex}, we use the algorithm in \citep[Section 6]{criscitiello2022negative}, and for that we require the following.

\begin{assumption}\label{assump:bounded_curvature_tensor}
    Let $\newtarget{def:curvature_tensor}{\curvtensor}$ be the curvature tensor of a Riemannian manifold $\M$. Its covariant derivative is $\nabla \curvtensor = 0$.
\end{assumption}

Locally symmetric manifolds, like the SPD matrix manifold, manifolds of constant sectional curvature, $\operatorname{SO}(n)$, the Grasmannian manifold, are all manifolds such that $\nabla \curvtensor = 0$. We argue that this assumption is mild, since in particular these manifolds cover all of the applications in \cref{sec:introduction}.

\begin{algorithm}
    \caption{Boosted Riemacon: ball optimization boosting of a \riemacon{} instance (\cref{alg:accelerated_gconvex})}
    \label{alg:instance_of_riemacon}

\begin{algorithmic}[1] 
    \REQUIRE Differentiable function $\f:\NN\subset\M\to\R$ that is $\L $-smooth and $\mu$-strongly g-convex in $\ball(\xastg, 3\RR)\subset \NN$; initial point $\xInit \in \NN$; bound $\RR \geq \dist(\xInit, \xastg)$; accuracy $\epsilon$.

       \riemaconsc{}: The strongly convex version of \cref{alg:accelerated_gconvex} in \cref{thm:g_convex_acceleration} (cf. {\hyperlink{proof:thm:g_convex_acceleration}{its proof}}).
    \vspace{0.1cm}
    \hrule
    \vspace{0.1cm}
    
    \State \textbf{if} $2\RR \leq (46\RR \abs{\kmin} \zeta{2\RR})^{-1}$ \textbf{then return} $\text{\riemaconsc}(\ball(\xInit, \RR), \xInit, \f, \zeta{4R}/\L, \epsilon)$ \label{line:if_D_equal_R_one_single_Riemacon} \label{line:first_if_in_instanced_alg}
    \State Compute $\D$ such that $\D = (46\RR \abs{\kmin} \zeta{\D})^{-1}$. Alternatively, make $\D\gets (70 \RR\abs{\kmin})^{-1}$.\label{line:computing_D_in_instanced_alg}
    
    \State $\newtarget{def:total_number_of_iterations_of_instanced_algorithm}{\TT} \gets \lceil \frac{4\RR}{\D} \ln(\frac{\L\RR^2}{\epsilon}) \rceil$; \ \  $\newtarget{def:accuracy_epsilon_prime_in_ball_subproblem}{\epsilonp} \gets \min\{\frac{\D\epsilon}{8\RR},\frac{\mu\RR^2}{2\TT^2}\}$; \ \ $\oldlambda \gets \zeta{2\D}/\L$
    \FOR {$k = 1 \textbf{ to } \TT$}
       \State $\X_k \gets \ball(x_{k-1}, \D/2)$
       \State $x_{k} \gets \text{\riemaconsc}(\X_k, x_{k-1}, \f, \oldlambda, \epsilonp)$ \Comment{\citep{criscitiello2022negative} as subroutine} \label{line:riemacon_w_subroutine}
    \ENDFOR
    \State \textbf{return} $x_{\TT}$.
\end{algorithmic}
\end{algorithm}

\begin{theorem}\label{thm:instantiation}\linktoproof{thm:instantiation}
    Let $\M$ be a finite-dimensional Hadamard manifold of bounded sectional curvature satisfying \cref{assump:bounded_curvature_tensor}. Consider $f:\NN\subset\M\to\R$ be an $\L$-smooth and $\mu$-strongly g-convex differentiable function in $\ball(\xastg, 3\RR)$, where $\xastg$ is its global minimizer and where $\RR \geq \dist(\xInit, \xastg)$ for an initial point $\xInit$. For any $\epsilon > 0$, \cref{alg:instance_of_riemacon} yields an $\epsilon$-minimizer after $\bigotilde{\zetar^2\sqrt{\L/\mu}\log(\L\RR^2/\epsilon)}$ calls to the gradient oracle of $\f$. By using regularization, this algorithm $\epsilon$-minimizes the g-convex case ($\mu=0$) after $\bigotilde{\zetar^2\sqrt{\zetar+\L \RR^2/\epsilon}}$ gradient oracle calls.
\end{theorem}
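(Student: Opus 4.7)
The plan is to view Algorithm~\ref{alg:instance_of_riemacon} as an iterated approximate ball-optimization oracle and invoke Theorem~\ref{thm:g_convex_acceleration} inside each ball. First I would dispatch the trivial branch on Line~\ref{line:first_if_in_instanced_alg}: there $\zeta{2\RR} = O(1)$ by design, so a single call $\text{RiemaconSC}(\ball(\xInit,\RR), \xInit, \f, \epsilon)$ satisfies the claim directly via Theorem~\ref{thm:g_convex_acceleration} with $\D \leq 2\RR$, giving the $\bigotildel{\sqrt{\L/\mu}\log(\L\RR^2/\epsilon)}$ complexity. For the non-trivial branch, I would verify that the implicit equation $\D = (46\RR\abs{\kmin}\zeta{\D})^{-1}$ admits a unique positive solution (the left side is increasing, the right side decreasing in $\D$) and that the explicit alternative $\D = (70\RR\abs{\kmin})^{-1}$ is admissible up to constants. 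The key consequence is that $\D\sqrt{\abs{\kmin}}$ is small, hence $\zeta{\D} = \Theta(1)$ and $\zeta{2\D} = \Theta(1)$. Therefore each inner RiemaconSC call sees only a constant geometric penalty, and by Theorem~\ref{thm:g_convex_acceleration} it finds an $\epsilonp$-minimizer of $\f$ over $\X_k = \ball(x_{k-1}, \D/2)$ in $\bigotildel{\sqrt{\L/\mu}\log(\L\RR^2/\epsilon)}$ outer iterations; each such iteration invokes the prox subroutine, whose condition number equals $\zeta{2\D} = O(1)$ thanks to the choice $\lambda = \zeta{2\D}/\L$, so the locally linearly convergent method of \citep{criscitiello2022negative} (whose ball radius can be taken independent of the condition number of $\f$, under Assumption~\ref{assump:bounded_curvature_tensor}) solves it in $O(\log(1/\sigmak))$ iterations.

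Next I would analyze the outer contraction. For each $k$, let $\alpha_k = \min(1,\, \D/(2\dist(x_{k-1}, \xastg)))$ and let $y_k$ be the point on the geodesic from $x_{k-1}$ to $\xastg$ at arclength $\alpha_k \dist(x_{k-1}, \xastg)$; by construction $y_k \in \X_k$. Strong g-convexity along this geodesic gives $\f(y_k) - \f(\xastg) \leq (1-\alpha_k)(\f(x_{k-1}) - \f(\xastg))$, and since $x_k$ is an $\epsilonp$-minimizer of $\f$ over $\X_k$,
\begin{equation*}
    \f(x_k) - \f(\xastg) \leq (1 - \alpha_k)(\f(x_{k-1}) - \f(\xastg)) + \epsilonp.
\end{equation*}
By $\mu$-strong g-convexity, $\dist(x_k, \xastg)^2 \leq 2(\f(x_k) - \f(\xastg))/\mu$, and I would show by induction that $\dist(x_k, \xastg) \leq 2\RR$ throughout, which both keeps all iterates inside $\ball(\xastg, 3\RR)$ (where $\f$ is assumed smooth and strongly g-convex) and enforces $\alpha_k \geq \D/(4\RR)$ whenever $\dist(x_{k-1}, \xastg) \geq \D/2$. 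Telescoping the recursion over $\TT$ rounds with $\epsilonp \leq \mu\RR^2/(2\TT^2)$ as set in the algorithm yields $\f(x_\TT) - \f(\xastg) \leq \epsilon$.

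It remains to count oracle calls. The outer count is $\TT = O((\RR/\D)\log(\L\RR^2/\epsilon))$; in the non-trivial branch the failure of the condition on Line~\ref{line:first_if_in_instanced_alg} implies $\RR^2\abs{\kmin} = \Omega(1)$, which combined with the identity $\zetar^2 = \Theta(1 + \RR^2\abs{\kmin})$ (from $\zeta{\RR} = \RR\sqrt{\abs{\kmin}}\coth(\RR\sqrt{\abs{\kmin}})$) yields $\RR/\D = \Theta(\RR^2\abs{\kmin}\zeta{\D}) = \Theta(\zetar^2)$. Multiplying by the inner complexity $\bigotildel{\sqrt{\L/\mu}\log(\L\RR^2/\epsilon)}$ and absorbing the constant-factor overhead of each prox solve produces the announced $\bigotildel{\zetar^2\sqrt{\L/\mu}\log(\L\RR^2/\epsilon)}$ bound. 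For the g-convex case ($\mu = 0$), I would regularize $\f$ with $(\epsilon/(4\RR^2))\Phi_{\xInit}$, which by Fact~\ref{fact:hessian_of_riemannian_squared_distance} is $\mu'$-strongly g-convex with $\mu' = \epsilon/(2\RR^2)$ and has smoothness $\L' \leq \L + \zetar\epsilon/(2\RR^2)$. An $\epsilon/2$-minimizer of the regularized problem is an $\epsilon$-minimizer of $\f$ within $\ball(\xastg, \RR)$, and plugging $\sqrt{\L'/\mu'} = \sqrt{\L\RR^2/\epsilon + \zetar}$ into the strongly convex bound delivers the $\bigotildel{\zetar^2\sqrt{\zetar + \L\RR^2/\epsilon}}$ estimate. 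The main obstacle is establishing the invariant $\dist(x_k, \xastg) \leq 2\RR$ throughout, because the per-step error $\epsilonp$ enters additively and must be shown not to accumulate enough to break the invariant; the algorithm's setting $\epsilonp = \min\{\D\epsilon/(8\RR), \mu\RR^2/(2\TT^2)\}$ is precisely calibrated to make this induction go through.
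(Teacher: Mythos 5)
Your overall architecture matches the paper's: dispatch the trivial branch, fix $\D$ so that $\zeta{\D}=\Theta(1)$, prove a per-ball contraction $\f(x_k)-\f(\xastg)\leq(1-\Theta(\D/\RR))(\f(x_{k-1})-\f(\xastg))+\epsilonp$ via a g-convex combination landing in $\X_k$, telescope, count $\TT\cdot\bigotilde{\sqrt{\L/\mu}}$ oracle calls, and regularize for the $\mu=0$ case. All of that is sound.

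However, there is a genuine gap exactly at the point you identify as ``the main obstacle'': your proposed mechanism for the invariant $\dist(x_k,\xastg)\leq 2\RR$ does not work. You plan to derive the distance bound from the function-value recursion via $\dist(x_k,\xastg)^2\leq \frac{2}{\mu}(\f(x_k)-\f(\xastg))$. But the function-value recursion only guarantees $\f(x_k)-\f(\xastg)\leq \f(\xInit)-\f(\xastg)+k\epsilonp\leq \frac{\L\RR^2}{2}+\frac{\mu\RR^2}{2\TT}$ at intermediate steps, so strong convexity yields only $\dist(x_k,\xastg)\lesssim \RR\sqrt{\L/\mu}$ --- a bound that blows up with the condition number and does not keep the iterates inside $\ball(\xastg,3\RR)$, where smoothness and strong g-convexity are assumed. (The trivial bound $\dist(x_k,\xastg)\leq\dist(\xInit,\xastg)+k\D/2$ also fails, since $\TT\D=\Theta(\RR\log(\L\RR^2/\epsilon))$.) The induction as you state it therefore cannot close. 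The missing ingredient is a purely geometric non-expansiveness property of the \emph{exact} constrained minimizer: if $x_k^\ast=\argmin_{x\in\X_k}\f(x)$, then $\dist(x_k^\ast,\xastg)\leq\dist(x_{k-1},\xastg)$, which the paper proves in \cref{prop:we_go_no_farther_than_2R} by observing that first-order optimality makes the angle at $x_k^\ast$ between $\exponinv{x_k^\ast}(\xastg)$ and $\exponinv{x_k^\ast}(x_{k-1})$ obtuse and then applying the Riemannian cosine law (\cref{lemma:cosine_law_riemannian}). Only the \emph{residual} $\dist(x_k,x_k^\ast)\leq\sqrt{2\epsilonp/\mu}\leq\RR/\TT$ is controlled via strong convexity, and these residuals accumulate to at most $\RR$ over $\TT$ steps --- this is the actual role of the constraint $\epsilonp\leq\mu\RR^2/(2\TT^2)$. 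Without this step your argument does not establish that the iterates stay where $\f$ has the assumed regularity, which is the central point of the theorem. The remaining deviations (e.g., your regularization constant giving $\mu'=\epsilon/(4\RR^2)$ rather than the claimed $\epsilon/(2\RR^2)$, and the need to verify that the regularized minimizer stays within $\RR$ of $\xInit$, which the paper gets from \citep[Lemma 21]{martinez2020global}) are minor.
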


In sum, the algorithm enjoys the same rates as \AGD{} in the Euclidean space up to a factor of $\zetar^2 = \RR^2\abs{\kmin}\coth^2(\RR\sqrt{\abs{\kmin}}) \leq (1+\RR\cdot\abs{\kmin})^2$ (our geometric penalty) and up to universal constants and $\log$ factors. Note that as the minimum curvature $\kmin$ approaches $0$ we have $\zetar \to 1$. 

Also, we emphasize that our \cref{alg:accelerated_gconvex} only needs to query the gradient of $\f$ at points in $\X$ and the $\L $-smoothness and $\mu$-strong g-convexity of $\f$ only need to hold in $\X$. This is relevant because in Riemannian manifolds the condition number $\L /\mu$ can have a lower bound depending on the size of the set, cf. \citep[Proposition 28]{martinez2020global}. Intuitively, although there are twice differentiable functions defined over the Euclidean space whose Hessian is constant everywhere, in other Riemannian cases the metric may preclude having such global condition and the larger the set is the larger the minimum possible condition number becomes. Compare this, for instance, with the bounds on the Hessian's eigenvalues of the squared-distance function in \cref{fact:hessian_of_riemannian_squared_distance}. We also note that we can reduce the geometric penalties with acces to a convex projection oracle, cf. \cref{sec:other_subroutine}.

\section{Related work and comparisons}\label{sec:related_work}

We compare our results with previous works. We have summarized most of the following discussion in \cref{table:comparisons:riemannian}. We include Nesterov's \AGD{} in the table for comparison purposes\footnote{Note that the original method in \citep{nesterov1983method} needed to query the gradient of the function outside of the feasible set, and this was later improved to only require queries at feasible points \citep{nesterov2005smooth} as in our work, hence our choice of citation in the table.}.   
There are some works on Riemannian acceleration that focus on empirical evaluation or that work under strong assumptions \citep{liu2017accelerated, alimisis2019continuous, huang2019extending, alimisis2020practical, lin2020accelerated}, see \citep{martinez2020global} for instance for a discussion on these works. We focus the discussion on the most related work with guarantees. \citep{zhang2018towards} obtain an algorithm that, up to constants, achieves the same rates as \AGD{} in the Euclidean space, for $\L$-smooth and $\mu$-strongly g-convex functions but only \textit{locally}, namely when the initial point starts in a small neighborhood $N$ of the minimizer $\xastg$: a ball of radius $O((\mu/\L)^{3/4})$ around it. \citep{ahn2020nesterov} generalize the previous algorithm and, by using similar ideas as in \citep{zhang2018towards} for estimating a lower bound on $\f$, they adapt the algorithm to work globally, proving that it eventually decreases the objective as fast as \AGD{}. However, as \citep{martinez2020global} noted, it takes as many iterations as the ones needed by Riemannian gradient descent (\newtarget{def:acronym_riemannian_gradient_descent}{\RGD{}}) to reach the neighborhood of the previous algorithm. The latter work also noted that in fact \RGD{} and the algorithm in \citep{zhang2018towards} can be run in parallel and combined to obtain the same convergence rates as in \citep{ahn2020nesterov}, which suggested that for this technique, full acceleration with the rates of \AGD{} only happens over the small neighborhood $N$ in \citep{zhang2018towards}. Note however that \citep{ahn2020nesterov} show that their algorithm will decrease the function value faster than \RGD{}, but this is not quantified.  \citep{jin2021riemannian} developed a different framework, arising from \citep{ahn2020nesterov} but with the same guarantees for accelerated first-order methods. We do not feature it in the table. \citep{criscitiello2022negative} showed, under mild assumptions, that in a ball of center $x\in\M$ and radius $O((\mu/\L)^{1/2})$ containing $\xastg$, the pullback function $\f\circ\expon{x}:\Tansp{x}\M\to\R$ is Euclidean, strongly convex, and smooth with condition number $O(\L/\mu)$, so \AGD{} yields local acceleration as well. In short, acceleration is possible in a small neighborhood because there the manifold is almost Euclidean and the geometric deformations are small in comparison to the curvature of the objective. These techniques fail for the g-convex case since the neighborhood becomes a point ($\mu/\L = 0$).

Finding fully accelerated algorithms that are \textit{global} presents a harder challenge. By a fully accelerated algorithm we mean one with rates with same dependence as \AGD{} on $\L $, $\epsilon$, and if it applies, on $\mu$. \citet{martinez2020global} provided such algorithms for g-convex functions, strongly or not, defined over manifolds of constant sectional curvature and constrained to a ball of radius $\RR$. The convergence rates initially had large constants with respect to $\RR$ but were later improved, cf. \cref{table:comparisons:riemannian}. \citet{kim2022accelerated} designed global algorithms with the same rates as \AGD{} up to universal constants and a factor of $\zetad$, their geometric penalty. However, they need to assume that the iterates of their algorithm remain in their feasible set $\X$ and they point out on the necessity of removing such an assumption, which they leave as an open question. Our work solves this question for a wide class of Hadamard manifolds. In their technique, they show they can use the structure of the accelerated scheme to \textit{move} lower bound estimations on $\f(\xastg)$ from one particular tangent space to another without incurring extra errors, when the right Lyapunov function is used. By \textit{moving} lower bounds here we mean finding suitable lower bounds that are simple (a quadratic in their case), when pulled-back to one tangent space, if we start with a similar bound that is simple when pulled-back to another tangent space.

\paragraph{Lower bounds.}
In this paragraph, we omit constants depending on the curvature bounds in the {$\text{big-}O$} notations for simplicity. \citep{hamilton2021no} proved an optimization lower bound showing that acceleration in Riemannian manifolds is harder than in the Euclidean space. \citep{criscitiello2022negative} largely generalized their results. They essentially show that for a large family of Hadamard manifolds, there is a function that is smooth and strongly g-convex in a ball of radius $\RR$ that contains the minimizer $\xastg$, and for which finding a point that is $\RR/5$ close to $\xastg$ requires $\bigomegatilde{\RR}$ calls to the gradient oracle. Note that these results do not preclude the existence of a fully accelerated algorithm with rates $\bigotilde{\RR} +$\AGD{} rates, for instance. A similar hardness statement is provided for smooth and only g-convex functions. Also, reductions as in \citep{martinez2020global} evince this hardness is also present in this case. 

\paragraph{Handling constraints to bound geometric penalties.}\hypertarget{sec:handling_constraints_to_bound_geometric_penalties}{} In our algorithm and in all other known fully accelerated algorithms, learning rates depend on the diameter of the feasible set. This is natural: estimation errors due to geometric deformations depend on the diameter via the constants $\zeta{\D}$, $\delta{\D}$, the cosine-law Riemannian inequalities \cref{lemma:cosine_law_riemannian}, or other analogous inequalities, and the algorithms take these errors into account. All other previous works are not able to deal with any constraints and hence they simply assume that the iterates of their algorithms stay within one such pre-specified set, except for \citep{martinez2020global} and \citep{criscitiello2022negative} that enforce a ball constraint, as we explained above. However, these two works have their applicability limited to spaces of constant curvature and to local optimization, respectively. Note that even if one could show that given a choice of learning rate, convergence implies that the iterates will remain in some compact set, then because the learning rates depend on the diameter of the set, and the diameter of the set would depend on the learning rates, \emph{one cannot conclude from this argument that the assumption these works make is going to be satisfied}. In contrast, in this work, we design a general accelerated framework and an instance of it that keep the iterates bounded in a set we \emph{pre-specify}, effectively bounding geometric penalties while we do not need to resort to any other extra assumptions, solving the open question in \citep{kim2022accelerated}.

Some other works study and use Riemannian metric projections, see \citep{walter1974metric, hosseini2013metric, barani2013metric, bacak2014convex, zhang2016first} and references therein. Among them, \citep{zhang2016first} introduced several deterministic and stochastic first-order methods that use metric-projection oracles.

\paragraph{Riemannian proximal methods.} There are some works that study proximal methods in Riemannian manifolds, but most of them focus on asymptotic results or assume the proximal operator can be computed exactly \citep{wang2015convergence, bento2017iteration, bento2016new, khammahawong2021tseng, chang2021inertial}. The rest of these works study proximal point methods under different inexact versions of the proximal operator as ours and they do not show how to implement their inexact version in applications, like in our case of smooth and g-convex optimization. In contrast, we implement the inexact proximal operator with a first-order method. \citep{ahmadi2014convergence} provide a convergence analysis of an inexact proximal point method but when applied to optimization they assume the computation of the proximal operator is exact. \citep{tang2014rate} uses a different inexact condition and proves linear convergence, under a growth condition on $\f$. \citep{wang2016proximal} obtains linear convergence of an inexact proximal point method under a different growth assumption on $\f$ and under an absolute error condition on the proximal function. Most importantly, none of these methods presented acceleration.

\section{Conclusion and future directions}\label{sec:future_work}
In this work, we pursued an approach that, by designing and making use of inexact Riemannian proximal methods, yielded accelerated optimization algorithms. Consequently we were able to work without an undesirable assumption that most previous methods required, whose potential satisfiability is not clear: that the iterates stay in certain specified geodesically-convex set without enforcing them to be in the set. A future direction of research is the study of whether there are algorithms like ours that incur even lower geometric penalties or that do not incur $\log(1/\epsilon)$ factors. Determining whether the convergence rates of fully accelerated algorithms necessarily incur a geometric factor is an interesting open problem: current lower bounds only require an additive geometric penalty and the rate of unaccelerated unconstrained \RGD{} in \citep[Thm. 15]{zhang2016first} does present an additive geometric constant only, while all known accelerated methods have a multiplicative geometric constant in their rates. Note that for the local algorithms in \cref{table:comparisons:riemannian}, this factor is a constant. Another interesting direction consists of studying generalizations of our approach to more general manifolds, namely the full Hadamard case, and manifolds of non-negative or even of bounded sectional cuvature.

\clearpage

\acks{
We thank Christophe Roux for proofreading and bringing an error to our attention in an early version of this draft. David Martínez-Rubio was partially funded by the DFG Cluster of Excellence MATH+ (EXC-2046/1, project id 390685689) funded by the Deutsche Forschungsgemeinschaft (DFG).
}

\printbibliography[heading=bibintoc] 

\clearpage

\appendix

\section[Convergence of Riemacon (Algorithm \ref{alg:accelerated_gconvex})]{Convergence of \riemacon{} (\texorpdfstring{\cref{alg:accelerated_gconvex}}{Algorithm \ref{alg:accelerated_gconvex}})}
We start the analysis by noting a property that our parameters satisfy.
\begin{lemma}\label{lemma:inequalities_on_ak}
    For the parameter choices of $\ak$ and $\Ak[k-1]$ in \cref{alg:accelerated_gconvex} we have, for all $k\geq 1$:
    \[
        \frac{8\lambda}{9} (\xi \Ak[k-1] + \ak)  \geq \ak[][2] \geq  \frac{3\lambda}{4}(\xi\Ak[k-1]+\xi\ak).
    \] 
\end{lemma}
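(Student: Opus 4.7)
The plan is to reduce both inequalities to polynomial inequalities in $k$ and $\xi$ with all nonnegative coefficients and verify them directly, using only the explicit formulas stated in the algorithm and the basic fact $\xi \geq 1$.

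First I would record the key algebraic identity behind the statement. Since by definition $A_k = A_{k-1} + a_k/\xi$, multiplying by $\xi$ gives $\xi A_k = \xi A_{k-1} + a_k$. Hence the upper bound becomes the clean inequality
\[
\frac{8\lambda}{9}\,\xi A_k \;\geq\; a_k^2,
\]
while the lower bound, after writing $\xi A_{k-1} + \xi a_k = \xi A_k + (\xi-1) a_k$, becomes
\[
a_k^2 \;\geq\; \frac{3\lambda}{4}\bigl(\xi A_k + (\xi-1) a_k\bigr).
\]
This reformulation is what makes the algebra go through smoothly, because now both sides involve the compact closed form $\xi A_k = \lambda\bigl(\tfrac{k(k+1+64\xi)}{5} + 200\xi^2\bigr)$ (which one can verify by induction from $A_0 = 200\lambda\xi$ and the recursion for $A_k$, matching the formula stated in the algorithm).

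Next I would substitute $a_k = \tfrac{2\lambda(k+32\xi)}{5}$ and the closed form for $\xi A_k$ into each of the two reduced inequalities, divide through by $\lambda^2$, and clear denominators. After expansion, the upper bound reduces to showing
\[
4k^2 + 40k + 256 k\xi + 3136 \xi^2 \;\geq\; 0,
\]
and the lower bound reduces to
\[
k^2 + 15k + 34 k\xi + 424\xi^2 + 960\xi \;\geq\; 0.
\]
Both are manifestly true since $k \geq 1$ and $\xi = 4\zeta_{2D} - 3 \geq 1$ (using $\zeta_{2D}\geq 1$ in Hadamard manifolds from \cref{fact:hessian_of_riemannian_squared_distance}); in fact every coefficient is strictly positive, so the inequalities hold with slack.

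The only ``obstacle'' is bookkeeping in the expansion, so the main care is in the arithmetic of the cross terms: one should expand $(k+32\xi)^2 = k^2 + 64k\xi + 1024\xi^2$ once and reuse it, and keep track of the common factor $\lambda^2/25$ on the left-hand side versus $\lambda^2/5$ on the right-hand side when clearing denominators. A clean way to present the proof is to first record the identities $\xi A_k = \xi A_{k-1}+a_k$ and the closed form for $\xi A_k$, then display both polynomial inequalities side by side and observe that all coefficients are nonnegative.
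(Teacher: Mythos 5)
Your proof is correct and follows essentially the same route as the paper: both substitute the closed forms of $\ak$ and $\Ak[k-1]$ and reduce the two inequalities to polynomial inequalities in $k$ and $\xi$ with nonnegative coefficients (your reduced forms $4k^2+40k+256k\xi+3136\xi^2\geq 0$ and $k^2+15k+34k\xi+424\xi^2+960\xi\geq 0$ match what one gets from the paper's displayed expansion). Your reformulation via $\xi \Ak = \xi\Ak[k-1]+\ak$ just makes the bookkeeping slightly cleaner than the paper's, which leaves the verification as ``trivially holds.''
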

\begin{proof}
    It is a simple computation to check that $\ak$ and $\Ak[k-1]$ satisfy such inequality. The inequalities are equivalent to the following, which trivially holds:
\begin{align*}
 \begin{aligned}
     \frac{8}{9}( (k^2-k+64k\xi &-64\xi +1000\xi^2 ) + (2k + 64\xi) ) \geq \frac{4}{5}(k^2+64k\xi + 1024\xi^2) \\
        &\geq \frac{3}{4}((k^2-k+64k\xi -64\xi +1000\xi^2 ) + (2k\xi + 64\xi^2)).
   \end{aligned}
\end{align*}
\end{proof}

We now prove \cref{thm:psi_is_lyapunov}, which will allow us to use $\psik$ as a Lyapunov function to show the final convergence rates. The proof will use \cref{lemma:the_suitable_approximate_prox_has_the_property_needed_in_alg1} and \cref{lemma:compensated_geometric_penalty}, that we state and prove afterwards. 

\begin{proof}\textbf{of \cref{thm:psi_is_lyapunov}}. \linkofproof{thm:psi_is_lyapunov}
    Inequality $(1-\Deltak)\psik \leq \psik[k-1]$ is equivalent to 
\begin{align*}
 \begin{aligned}
     (1-\Deltak)&\left(\Ak[k](\f(\yk)-\f(\xast))+ \frac{1}{2}\norm{\zy-\xast}_{\yk}^2 +\frac{\xi-1}{2}\norm{\yk-\zy}_{\yk}^2 \right)  \\
     \leq \Ak[k-1]&(\f(\yk[k-1])-\f(\xast))+ \left(\frac{1}{2}\norm{\zy[k-1][k-1]-\xast}_{\yk[k-1]}^2 +\frac{\xi-1}{2}\norm{\yk[k-1]-\zy[k-1][k-1]}_{\yk[k-1]}^2 \right)  \\
   \end{aligned}
\end{align*}
    which, by bounding $(1-\Deltak)(\f(\yk)-\f(\xast)) \leq \f(\yk)-\f(\xast)$ and reorganizing, is implied by the following:
\begin{align*}
 \begin{aligned}
     \Ak[k-1]&(\f(\yk)-\f(\yk[k-1]))+\frac{\ak}{\xi}(\f(\yk[k])-\f(\xast)) \leq \frac{1}{2} \norm{\zy[k-1][k-1]-\xast}_{\yk[k-1]}^2 - \frac{1-\Deltak}{2} \norm{\zy-\xast}_{\yk}^2 \\
     &+ \frac{\xi-1}{2}\left( \norm{\yk[k-1]-\zy[k-1][k-1]}_{\yk[k-1]}^2-(1-\Deltak)\norm{\yk-\zy}_{\yk}^2 \right). 
   \end{aligned}
\end{align*}
    We have that due to the projection in Line \ref{line:dual_projecting_step}, then $\xk$ is not very far from any $p\in\X$:
    \begin{equation}\label{eq:x_k_is_not_far_from_X}
    \dist(\xk, p) \leq \norm{\xk-\yk[k-1]}_{\yk[k-1]}+\dist(\yk[k-1], p) \circled{1}[<] \norm{\zybar[k-1][k-1]- \yk[k-1]}_{\yk[k-1]} + \D \circled{2}[\leq] 2\D,
    \end{equation}
    where $\circled{1}$ holds by the definition of $\xk$ and the fact $\yk[k-1], p \in \X$, and $\circled{2}$ is due to the projection defining $\zybar[k-1][k-1]$. Now we use the first part of \cref{lemma:the_suitable_approximate_prox_has_the_property_needed_in_alg1} with both $x\gets \yk[k-1]$ and $x\gets \xast$ and we bound the resulting errors $\errork(\cdot)$ by using the second part of \cref{lemma:the_suitable_approximate_prox_has_the_property_needed_in_alg1}. We also use \cref{lemma:compensated_geometric_penalty}, so it is enough to prove the following:
\begin{align*}
 \begin{aligned}
     \Ak[k-1]&\innp{\vkx[k], \xk-\yk[k-1]}+(\ak/\xi)\innp{\vkx[k], \xk -\zx[k-1]+\zx[k-1] -\xast} - \frac{4\lambda}{9}(\Ak[k-1]+\ak/\xi)\norm{\vkx[k]}^2 \\
     &\leq \frac{1}{2} \norm{\zx[k-1]-\xast}_{\xk}^2 - \frac{1}{2} \norm{\zx-\xast}_{\xk}^2 + \frac{\xi-1}{2}\left( \norm{\xk[k]-\zx[k-1]}_{\xk}^2-\norm{\xk-\zx}_{\xk}^2 \right) ,
   \end{aligned}
\end{align*}
    Note that thanks to \cref{lemma:compensated_geometric_penalty} now we have the potentials on the right hand side as expressions in the tangent space of $\xk$. Also, note that we have canceled some potentials proportional to $\Deltak$ coming from the bound on the error $\errork(\cdot)$. Now we use that by definition of $\xk$ we have, for all $v \in \Tansp{\xk}\M$, $\Ak[k-1]\innp{v, \xk-\yk[k-1]} = -\ak\innp{v, \xk-\zx[k-1]}$, so we use this fact for $v = \vkx[k]$ and use the following identity, that holds by the definion of $\zx\defi\zx[k-1]-\etak  \vkx[k]$: 
\[
    \frac{\ak/\xi}{\etak }\innp{\etak  \vkx[k], \zx[k-1] -\xast} = \frac{\ak/\xi}{2\etak }\left(\etak[][2]\norm{\vkx[k]}_{\xk}^2 + \norm{\zx[k-1]-\xast}_{\xk}^2 - \norm{\zx[k] -\xast}_{\xk}^2\right).
\] 
    so that, after canceling terms, it is enough to prove:
\begin{align}\label{ineq:aux_asdf}
 \begin{aligned}
     \ak(1-1/\xi)&\innp{-\vkx[k], \xk-\zx[k-1]} -\frac{\ak(1-1/\xi)}{2\etak }\etak[][2]\norm{\vkx[k]}^2  \\ 
     & + \norm{\vkx[k]}^2(-\frac{4}{9}(\Ak[k-1]\lambda+\ak\lambda/\xi) +\frac{\ak\etak }{2}) \\
     &\leq \frac{\xi-1}{2}\left( \norm{\xk[k]-\zx[k-1]}_{\xk}^2-\norm{\xk-\zx}_{\xk}^2 \right) ,
   \end{aligned}
\end{align}
    Now we show that in the previous inequality \eqref{ineq:aux_asdf}, the first line cancels with the last line. Note that $(\ak(1-1/\xi))/\etak  = (1-1/\xi)/(1/\xi) = \xi-1$. Thus, by using again the definition of $\zx[k]$, we have:
\[
    \frac{\ak(1-1/\xi)}{\etak }\innp{-\etak \vkx[k],\xk-\zx[k-1]} = \frac{\ak(1-1/\xi)}{2\etak }\left(\etak[][2]\norm{\vkx[k]}_{\xk}^2+\norm{\xk[k]-\zx[k-1]}_{\xk}^2-\norm{\xk-\zx}_{\xk}^2\right).
\] 
    Finally, it only remains to prove:
\begin{equation}\label{ineq:rate_of_as}
    \frac{\norm{\vkx[k]}^2}{2\xi}\cdot\left(-\frac{8}{9}(\xi\Ak[k-1]\lambda+\ak\lambda) + \ak[k][2]\right) \leq 0,
\end{equation}
    which holds by \cref{lemma:inequalities_on_ak}.
\end{proof}

We now show the two auxiliary lemmas that we used in the previous proof.

\begin{lemma}\label{lemma:the_suitable_approximate_prox_has_the_property_needed_in_alg1}
    Let $\hk(x) \defi \f(x) + \frac{1}{2\lambda}\dist(\xk, x)^2$ be the strongly g-convex function used at step $k$, and let $\ykast = \argmin_{y\in\X} \hk(y)$. Then, for $\yk\in\X$, if we let $\vkx[k] \defi -\exponinv{\xk}(\yk)/\lambda$, then the following holds, for all $x\in\X$:
    \[
        \f(x) \geq \f(\yk) + \innp{\vkx[k], x-\xk}_{\xk} + \frac{\lambda}{2}\norm{\vkx[k]}^2 - \errork(x) 
    \] 
     where $\newtarget{def:error_k_from_approximating_grad}{\errork}(x) \defi -\frac{1}{\lambda}\innp{\yk - \ykast, x-\xk}_{\xk} + (\hk(\yk)-\hk(\ykast))$. Moreover, if $\yk$ satisfies
     \[
         \hk(\yk) - \hk(\ykast) \leq \frac{\Deltak \dist(\xk,\ykast)^2}{78\lambda},
     \] 
     then we have 
\begin{align*}
 \begin{aligned}
     -\frac{\lambda}{2}&\norm{\vkx[k]}^2(\Ak[k-1] + \ak/\xi) + \ak\errork(\xast)/\xi + \Ak[k-1]\errork(\yk[k-1]) \\
     &\leq -\frac{4\lambda\norm{\vkx[k]}^2}{9}(\Ak[k-1] + \ak/\xi) + \frac{\Deltak}{2}\left(\norm{\xast-\zx[k-1]}_{\xk}^2 +(\xi-1)\norm{\xk-\zx[k-1]}_{\xk}^2 \right). \\
   \end{aligned}
\end{align*}

\end{lemma}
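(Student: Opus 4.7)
I would start from strong g-convexity of $\hk(y)=\f(y)+\tfrac{1}{2\lambda}\dist(\xk,y)^2$. Since $\f$ is g-convex and $\Phi_{\xk}$ is $1$-strongly g-convex on any Hadamard manifold by \cref{fact:hessian_of_riemannian_squared_distance}, the function $\hk$ is $\tfrac{1}{\lambda}$-strongly g-convex in $\X$. The constrained minimizer $\ykast$ satisfies $\innp{\nabla\hk(\ykast),\exponinv{\ykast}(x)}_{\ykast}\geq 0$ for every $x\in\X$, hence
\[
\hk(x)\;\geq\;\hk(\ykast)+\tfrac{1}{2\lambda}\dist(x,\ykast)^2.
\]
Expanding $\hk$ on both sides and using the identity $\f(\ykast)=\f(\yk)-(\hk(\yk)-\hk(\ykast))+\tfrac{1}{2\lambda}(\dist(\xk,\yk)^2-\dist(\xk,\ykast)^2)$, the $\dist(\xk,\ykast)^2$ terms cancel and $\tfrac{1}{2\lambda}\dist(\xk,\yk)^2$ becomes exactly $\tfrac{\lambda}{2}\norm{\vkx[k]}^2$ by the definition $\vkx[k]=-\exponinv{\xk}(\yk)/\lambda$. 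I am left with a quantity of the form $\tfrac{1}{2\lambda}(\dist(\ykast,x)^2-\dist(\xk,x)^2)$, which I lower bound by the Hadamard cosine-law inequality (\cref{lemma:cosine_law_riemannian}) applied at vertex $\xk$ of the triangle $(\xk,\ykast,x)$, giving
\[
\dist(\ykast,x)^2-\dist(\xk,x)^2\;\geq\;-2\innp{\exponinv{\xk}(\ykast),\exponinv{\xk}(x)}_{\xk}.
\]
Adding and subtracting $\exponinv{\xk}(\yk)$ inside the inner product directly produces $\innp{\vkx[k],x-\xk}_{\xk}+\tfrac{1}{\lambda}\innp{\yk-\ykast,x-\xk}_{\xk}$, matching the desired inequality with the stated $\errork(x)$.

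\textbf{Plan for the second inequality.} The goal is to show that the two $\errork$ terms, aggregated with weights $\ak/\xi$ and $\Ak[k-1]$, are absorbed into $\tfrac{\lambda}{18}\norm{\vkx[k]}^2(\Ak[k-1]+\ak/\xi)$ (the slack between $\tfrac{\lambda}{2}$ on the LHS and $\tfrac{4\lambda}{9}$ on the RHS) plus the stated $\Deltak/2$ multiples of the two potential terms in the tangent space at $\xk$. The strategy is to convert the accuracy condition into a quantitative bound on $\norm{\yk-\ykast}_{\xk}$ and on $\hk(\yk)-\hk(\ykast)$, both in units of $\lambda\norm{\vkx[k]}$. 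Strong g-convexity of $\hk$ at $\ykast$ gives $\dist(\yk,\ykast)^2\leq 2\lambda(\hk(\yk)-\hk(\ykast))\leq \Deltak\dist(\xk,\ykast)^2/39$. The triangle inequality then yields $\dist(\xk,\ykast)\leq \lambda\norm{\vkx[k]}/(1-\sqrt{\Deltak/39})$, which is at most a small constant times $\lambda\norm{\vkx[k]}$ since $\Deltak\leq 1/4$. Because Hadamard manifolds make $\exponinv{\xk}$ distance non-increasing (Rauch comparison/CAT(0)), I also get $\norm{\yk-\ykast}_{\xk}\leq \dist(\yk,\ykast)\leq \sqrt{\Deltak/39}\,\dist(\xk,\ykast)=O(\sqrt{\Deltak}\,\lambda\norm{\vkx[k]})$.

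\textbf{Finishing the bound.} For the constant part of $\errork$, the weighted sum is $(\Ak[k-1]+\ak/\xi)(\hk(\yk)-\hk(\ykast))\leq (\Ak[k-1]+\ak/\xi)\cdot \Deltak\cdot O(\lambda\norm{\vkx[k]}^2)/78$, which is well below $\tfrac{\lambda}{18}\norm{\vkx[k]}^2(\Ak[k-1]+\ak/\xi)$. For the inner-product parts, I decompose in $\Tansp{\xk}\M$ as $\xast-\xk=(\xast-\zx[k-1])+(\zx[k-1]-\xk)$, and similarly relate $\yk[k-1]-\xk$ to $\zx[k-1]-\xk$ (up to the parallel-transport compensation handled by \cref{lemma:compensated_geometric_penalty} and the coupling identity $\exponinv{\yk[k-1]}(\xk)=\tfrac{\ak}{\Ak[k-1]+\ak}\zybar[k-1][k-1]$ from Line~6 of \cref{alg:accelerated_gconvex}). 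Then for every resulting inner product $\innp{\yk-\ykast,\,u}_{\xk}$, I apply Cauchy–Schwarz and Young's inequality $ab\leq \tfrac{a^2}{2\beta}+\tfrac{\beta b^2}{2}$ with Young parameters $\beta$ chosen so that the $\norm{\yk-\ykast}_{\xk}^2$ contribution — already of size $O(\Deltak\lambda^2\norm{\vkx[k]}^2)$ — combines with the weight $\ak/(\lambda\xi)$ or $\Ak[k-1]/\lambda$ to fit inside the $\tfrac{\lambda}{18}\norm{\vkx[k]}^2(\Ak[k-1]+\ak/\xi)$ budget, while the positional $\norm{\xast-\zx[k-1]}_{\xk}^2$ and $\norm{\xk-\zx[k-1]}_{\xk}^2$ contributions get coefficient exactly $\Deltak/2$ and $\Deltak(\xi-1)/2$ respectively.

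\textbf{Main obstacle.} The technical heart is the constant-tracking: I must pick the Young parameters so that three competing constraints are met at once — enough slack in $\norm{\vkx[k]}^2$ (bounded by the factor $\tfrac{1}{18}$), coefficient $\Deltak/2$ in front of the isotropic potential, and coefficient $\Deltak(\xi-1)/2$ in front of the anisotropic one — while using only the accuracy constant $1/78$ and the ratios $\ak/(\xi\Ak[k-1]+\ak)$ produced by \cref{lemma:inequalities_on_ak}. The specific prefactor $78$ in the accuracy threshold is engineered precisely to make all these inequalities simultaneously slack by a safe margin.
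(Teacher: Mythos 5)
Your proposal is correct and follows essentially the same route as the paper; I verified that your constants close (the aggregated error is bounded by $(\Ak[k-1]+\ak/\xi+\tfrac{\ak[][2]}{\Deltak\lambda\xi})(\hk(\yk)-\hk(\ykast))$ plus the two $\Deltak/2$-weighted potentials, and with $\dist(\xk,\ykast)^2\le 2\dist(\xk,\yk)^2$ and \cref{lemma:inequalities_on_ak} this fits inside the $\lambda/18$ slack). The one genuine difference is in the first inequality: the paper extracts the subgradient $\vtky=\lambda^{-1}\exponinv{\ykast}(\xk)\in\partial(\f+\indicator{\X})(\ykast)$ from first-order optimality and then transports the resulting affine lower bound from $\Tansp{\ykast}\M$ to $\Tansp{\xk}\M$ via \cref{lemma:moving_hyperplanes:exact_approachment_recession}, whereas you invoke quadratic growth of $\hk$ above its constrained minimizer and then apply the Hadamard cosine law once at the vertex $\xk$; since \cref{lemma:moving_hyperplanes:exact_approachment_recession} is itself proved from two applications of the cosine law, the two derivations are equivalent in content (your dropped term $\tfrac{1}{2\lambda}\dist(\xk,\ykast)^2$ is exactly the paper's dropped $\tfrac{\lambda}{2}\norm{\vtkx}^2$). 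For the second inequality your per-term Young decomposition is algebraically identical to the paper's single Cauchy--Schwarz followed by the splitting $\norm{b+\omega c}\le\sqrt{(1+\omega)(\norm{b}^2+\omega\norm{c}^2)}$, and produces the same $\tfrac{\ak[][2]}{\Deltak\lambda\xi}(\hk(\yk)-\hk(\ykast))$ overhead. One small imprecision: relating $\exponinv{\xk}(\yk[k-1])$ to $\zx[k-1]$ is not handled by \cref{lemma:compensated_geometric_penalty} (that lemma transports the potential terms in the Lyapunov argument); what you actually need is the collinearity $\exponinv{\xk}(\yk[k-1])=-\tfrac{\Ak[k-1]}{\ak}^{-1}\,$-scaled... more precisely $\zx[k-1]=-\tfrac{\Ak[k-1]}{\ak}\exponinv{\xk}(\yk[k-1])$, which the paper derives from the coupling step by a similar-triangles argument in $\Tansp{\xk}\M$; you do cite the coupling identity, so the ingredient is present, just misattributed.
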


\begin{proof}
    The function $\hk$ is $\frac{1}{\lambda}$-strongly g-convex because by \cref{fact:hessian_of_riemannian_squared_distance} the function $\frac{1}{2}\dist(\xk, x)^2$ is $1$-strongly g-convex in a Hadamard manifold.  
    
    By the first-order optimality condition of $\hk$ at $\ykast$ we have that $\newtarget{def:v_tilde_k_sup_y}{\vtky} \defi \lambda^{-1}\exponinv{\ykast}(\xk)\in\partial (\f+\indicator{\X})(\ykast)$ is a subgradient of $\f+\indicator{\X}$ at $\ykast$. Thus, we have, for all $x\in\X$ and for $\newtarget{def:v_tilde_k_sup_x}{\vtkx} \defi \Gamma{\ykast}{\xk}(\vtky)$:
\begin{align*}
 \begin{aligned}
     \f(x) &\circled{1}[\geq] \f(\ykast) + \innp{\vtky, x-y_{k}^\ast}_{\ykast} \\
     & \circled{2}[\geq] \f(\ykast) + \innp{\vtkx, x-\xk}_{\xk} + \lambda\norm{\vtkx}^2 \\
     & \circled{3}[=] \f(\yk) + \innp{\vkx[k], x-\xk}_{\xk} + \frac{\lambda}{2}\norm{\vkx[k]}^2 + \frac{\lambda}{2}\norm{\vtkx}^2 \\
     & \ \  + \innp{\vtkx - \vkx[k], x-\xk}_{\xk}+ \left((\f(\ykast) + \frac{\lambda}{2}\norm{\vtkx}^2 ) - (\f(\yk) + \frac{\lambda}{2}\norm{\vkx[k]}^2) \right) \\
     & \circled{4}[\geq] \f(\yk) + \innp{\vkx[k], x-\xk}_{\xk} + \frac{\lambda}{2}\norm{\vkx[k]}^2  + \frac{1}{\lambda}\innp{\yk - \ykast, x-\xk}_{\xk} -(\hk(\yk)-\hk(\ykast)) \\
   \end{aligned}
\end{align*}
    where $\circled{1}$ holds because $\vtky \in \partial (\f+\indicator{\X})(y_{k}^\ast)$ and $x, \ykast \in\X$. In $\circled{2}$, we used the first part of \cref{lemma:moving_hyperplanes:exact_approachment_recession} along with $\deltad=1$. We just added and subtracted some terms in $\circled{3}$, and in $\circled{4}$, we dropped $\frac{\lambda}{2}\norm{\vtkx}^2$, and we used the definitions of $\hk$, $\vtkx$, and $\vkx[k] = -\exponinv{\xk}(\yk)/\lambda$.

    Now we proceed to prove the second part. The following holds:
\begin{align}\label{eq:bounding_part_of_the_not_really_subgradient_error}
 \begin{aligned}
     -\frac{\ak}{\lambda\xi}&\innp{\yk - \ykast, \xast-\xk}_{\xk} - \Ak[k-1]\frac{1}{\lambda}\innp{\yk - \ykast, \yk[k-1]-\xk}_{\xk} \\
     & \circled{1}[\leq] \frac{1}{\lambda}\norm{\yk - \ykast}_{\xk}\cdot \norm{\frac{\ak}{\xi} \xast + \Ak[k-1]\yk[k-1] - (\frac{\ak}{\xi} + \Ak[k-1])\xk}_{\xk} \\
     &\circled{2}[\leq]  \frac{1}{\lambda}\dist(\yk, \ykast)\cdot \frac{\ak}{\xi}\norm{\xast-\zx[k-1]  + (\xi-1)(\xk-\zx[k-1])}_{\xk} \\
     &\circled{3}[\leq]  \frac{1}{\lambda}\sqrt{2\lambda(\hk(\yk)-\hk(\ykast))}\cdot \frac{\ak}{\xi}\sqrt{\xi}\sqrt{\norm{\xast-\zx[k-1]}_{\xk}^2 +(\xi-1)\norm{(\xk-\zx[k-1])}_{\xk}^2} \\
     &=  \sqrt{\frac{2\ak[][2](\hk(\yk)-\hk(\ykast))}{\Deltak\lambda\xi}} \cdot \sqrt{\Deltak}\sqrt{\norm{\xast-\zx[k-1]}_{\xk}^2 +(\xi-1)\norm{(\xk-\zx[k-1])}_{\xk}^2} \\
     &\circled{4}[\leq]  \frac{\ak[][2](\hk(\yk)-\hk(\ykast))}{\Deltak\lambda\xi} + \frac{\Deltak}{2}(\norm{\xast-\zx[k-1]}_{\xk}^2 +(\xi-1)\norm{(\xk-\zx[k-1])}_{\xk}^2), \\
   \end{aligned}
\end{align}
    where $\circled{1}$ groups some terms and uses Cauchy-Schwartz. In inequality $\circled{2}$, for the first term we bounded the distance between $y_{k}^\ast$ and $\yk$ estimated from $\Tansp{\xk}\M$ by the actual distance, which is a property that holds in Hadamard manifolds and it holds by the first part of \cref{corol:moving_quadratics:exact_approachment_recession} with $\deltad=1$, $p\gets \ykast$, $y\gets \yk$, $x\gets \xk$, $z^y \gets 0$. The second term is substituted by a term of equal value by using Euclidean trigonometry in $\Tansp{\xk}\M$, as in the following. Let $w \defi \frac{1}{\ak/\xi + \Ak[k-1]} (\frac{\ak}{\xi}\exponinv{\xk}(\xast) + \Ak[k-1]\exponinv{\xk}(\yk[k-1]))$ and let $u \in \Tansp{\xk}$ be the point in the line containing $\exponinv{\xk}(\yk[k-1])$ and $0 = \exponinv{\xk}(\xk)\in \Tansp{\xk}$ such that the triangle with vertices $0$, $\exponinv{\xk}(\yk[k-1])$ and $w$ and the triangle with vertices $u$, $\exponinv{\xk}(\yk[k-1])$ and $\exponinv{\xk}(\xast)$ are similar triangles, and so 
\begin{equation}\label{eq:trigonometry_in_Tx}
    \frac{\norm{\exponinv{\xk}(\xast)-u}}{\norm{w-\exponinv{\xk}(\xk)}} \circled{5}[=] \frac{\norm{\exponinv{\xk}(\xast)-\exponinv{\xk}(\yk[k-1])}}{\norm{w-\exponinv{\xk}(\yk[k-1])}} \circled{6}[=] \frac{\Ak[k-1]+\ak/\xi}{\ak/\xi}.
\end{equation}
    We used the triangle similarity in $\circled{5}$ and in $\circled{6}$ we used the definition of $w$ as a convex combination of $\exponinv{\xk}(\xast)$ and $\exponinv{\xk}(\yk[k-1])$. It is enough to show $u = \xi \zx[k-1]$ as in such a case what we applied in $\circled{2}$ is equivalent to the equality \eqref{eq:trigonometry_in_Tx} above. By the definition of $\xk$, we have $\circled{7}$ below and by triangle similarity we have $\circled{8}$ below: 
\[
    \zx[k-1] \circled{7}[=] -\frac{\Ak[k-1]}{\ak}\exponinv{\xk}(\yk[k-1]) \circled{8}[=]  \frac{\Ak[k-1]}{\ak}\cdot \frac{\ak/\xi}{\Ak[k-1]}u = \frac{1}{\xi} u,
\] 
as desired. In the next inequality $\circled{3}$, we used that by $(1/\lambda)$-strong g-convexity of $\hk$ and by optimality of $\ykast$, we have $\frac{1}{2\lambda}\dist(\cdot, \ykast)^2 \leq \hk(\cdot) - \hk(\ykast)$. For the second term, we used that for vectors $b, c \in \R^n$ and $\omega \in \R_{\geq 0}$, we have, by Young's inequality, $\norm{b+wc} = \sqrt{\norm{b}^2 + \omega^2 \norm{c}^2 + 2\innp{\sqrt{\omega}b, \sqrt{\omega}c}} \leq \sqrt{(1+\omega)(\norm{b}^2 + \omega \norm{c}^2)}$. In $\circled{4}$ we used Young's inequality.

    Before we conclude, we note that 
    \begin{equation}\label{eq:aux_bounding_distances_from_x_to_both_prox_and_aprox_prox}
        \dist(\xk, \ykast) \leq \sqrt{2} \dist(\xk, \yk),
    \end{equation}
    which is implied by the following, where we use the same as in $\circled{3}$ above, the assumption on $\yk$ and $\Deltak \leq 1$:
\begin{align*}
 \begin{aligned}
     \dist(\xk, \ykast) &\leq \dist(\xk, \yk) + \dist(\yk, \ykast) \leq \dist(\xk, \yk) + \sqrt{2\lambda(\hk(\yk)-\hk(\ykast))} \\ 
     &\leq \dist(\xk, \yk) + \sqrt{\Deltak/34}\cdot \dist(\xk, \ykast) \leq  \dist(\xk, \yk) + \dist(\xk, \ykast)/4.
   \end{aligned}
\end{align*}

    Finally, we can make use of \eqref{eq:bounding_part_of_the_not_really_subgradient_error} and \eqref{eq:aux_bounding_distances_from_x_to_both_prox_and_aprox_prox} to obtain the claim in the second part of the lemma:
\begin{align*}
 \begin{aligned}
     -\frac{\lambda}{2}&\norm{\vkx[k]}^2(\Ak[k-1] + \ak/\xi) + \ak\errork(\xast)/\xi + \Ak[k-1]\errork(\yk[k-1]) - \frac{\Deltak}{2}\norm{\xast-\zx[k-1]}_{\xk}^2 \\
     & \quad  - \Deltak\frac{\xi-1}{2}\norm{(\xk-\zx[k-1])}_{\xk}^2  \\
     &\leq -\frac{\lambda}{2}\norm{\vkx[k]}^2(\Ak[k-1] + \ak/\xi) + \left(\Ak[k-1] + \ak/\xi + \frac{\ak[][2]}{\Deltak\lambda\xi} \right) (\hk(\yk)-\hk(\ykast)) \\
     &\circled{1}[\leq] -\frac{\lambda}{2}\norm{\vkx[k]}^2(\Ak[k-1] + \ak/\xi) + (\Ak[k-1] + \ak/\xi)\left( 1 + \frac{\ak[][2]}{(\xi\Ak[k-1] + \ak)\lambda} \right) \frac{\dist(\xk,\yk)^2}{34\lambda}\\ 
     &\circled{2}[\leq] -\frac{\lambda}{2}\norm{\vkx[k]}^2(\Ak[k-1] + \ak/\xi) + \frac{\dist(\xk,\yk)^2}{18\lambda}(\Ak[k-1] + \ak/\xi)\\ 
     & \circled{3}[=] -\frac{4\lambda\norm{\vkx[k]}^2}{9}(\Ak[k-1] + \ak/\xi),
   \end{aligned}
\end{align*}
    where $\circled{1}$ holds by the assumption on $\yk$, $\Deltak \leq 1$, and \eqref{eq:aux_bounding_distances_from_x_to_both_prox_and_aprox_prox}. Inequality $\circled{2}$ uses the upper bound on $\ak[][2]$ in \cref{lemma:inequalities_on_ak}, and $\circled{3}$ uses the definition $\vkx[k] \defi -\exponinv{\xk}(\yk)/\lambda$.

\end{proof}

The following lemma allows to \textit{move} the regularized lower bounds on the objective without incurring extra geometric penalties.

\begin{lemma}[Translating Potentials with no Geometric Penalty]\label{lemma:compensated_geometric_penalty}
    Using the variables in \cref{alg:accelerated_gconvex}, for any $\Deltak \in [0, 1)$, we have  
\begin{align*}
 \begin{aligned}
     \norm{\zx[k-1]&-\xast}_{\xk}^2 - (1-\Deltak)\norm{\zx-\xast}_{\xk}^2 + (\xi-1)\left( \norm{\xk[k]-\zx[k-1]}_{\xk}^2-(1-\Deltak)\norm{\xk-\zx}_{\xk}^2 \right) \\
     &\leq \norm{\zy[k-1][k-1]-\xast}_{\yk[k-1]}^2 - (1-\Deltak)\norm{\zy-\xast}_{\yk}^2 \\
     & \ \ + (\xi-1)\left( \norm{\yk[k-1]-\zy[k-1][k-1]}_{\yk[k-1]}^2-(1-\Deltak)\norm{\yk-\zy}_{\yk}^2 \right).
   \end{aligned}
\end{align*}
    
\end{lemma}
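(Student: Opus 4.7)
}

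The strategy is to bring everything onto the common tangent space $\Tansp{\xk}\M$ and then use the Hessian bounds from \cref{fact:hessian_of_riemannian_squared_distance} so that the geometric penalties produced by moving between tangent spaces are absorbed by the $(\xi-1)$-weighted dual-bias terms, where $\xi-1 = 4(\zeta{2\D}-1)$ is chosen precisely for this purpose. The argument proceeds in three stages.

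\emph{Stage 1 (Projection reduction).} Since $\zybar[k-1][k-1] = \Pii{\ball(0,\D)}(\zy[k-1][k-1])$ and both $\exponinv{\yk[k-1]}(\xast)$ and $0$ lie in $\ball(0,\D) \subset \Tansp{\yk[k-1]}\M$ (the former because $\yk[k-1],\xast \in \X$ which has diameter $\D$), non-expansiveness of Euclidean projection onto a convex set gives $\norm{\zybar[k-1][k-1]-\exponinv{\yk[k-1]}(\xast)}^2 \leq \norm{\zy[k-1][k-1]-\exponinv{\yk[k-1]}(\xast)}^2$ and $\norm{\zybar[k-1][k-1]}^2 \leq \norm{\zy[k-1][k-1]}^2$. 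Hence the RHS of the lemma only decreases when $\zy[k-1][k-1]$ is replaced by $\zybar[k-1][k-1]$, so it suffices to establish the inequality in this reduced form.

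\emph{Stage 2 (Transfer to $\Tansp{\xk}\M$ via parallel transport).} For the terms at $\yk$, the definition $\zy = \Gamma{\xk}{\yk}(\zx) + \exponinv{\yk}(\xk)$ together with $\Gamma{\yk}{\xk}(\exponinv{\yk}(\xk)) = -\exponinv{\xk}(\yk)$ (parallel transport along a geodesic preserves its own tangent field) and the isometry of parallel transport imply $\norm{\zy-\exponinv{\yk}(\xast)}_{\yk}^2 = \norm{\zx - r}_{\xk}^2$ and $\norm{\zy}_{\yk}^2 = \norm{\zx - \exponinv{\xk}(\yk)}_{\xk}^2$, where $r \defi \Gamma{\yk}{\xk}(\exponinv{\yk}(\xast)) + \exponinv{\xk}(\yk) \in \Tansp{\xk}\M$. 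For the terms at $\yk[k-1]$, the coupling step places $\xk$ on the geodesic from $\yk[k-1]$ to $q \defi \expon{\yk[k-1]}(\zybar[k-1][k-1])$ at fraction $t \defi \ak/(\Ak[k-1]+\ak) \in (0,1)$, so $\zx[k-1] = \exponinv{\xk}(q)$ with $\norm{\zx[k-1]}_{\xk} = (1-t)\norm{\zybar[k-1][k-1]}_{\yk[k-1]}$, and the vectors $\exponinv{\xk}(\yk[k-1])$ and $\zx[k-1]$ are collinear in $\Tansp{\xk}\M$.

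\emph{Stage 3 (Hadamard comparison).} All relevant points lie within distance at most $2\D$ from $\xast$, so \cref{fact:hessian_of_riemannian_squared_distance} gives $\Hess\Phi_\xast \preceq \zeta{2\D}\cdot\Id$ on the relevant region. Taylor expansion of $\Phi_\xast$ along the geodesic from $\yk[k-1]$ to $q$, combined with the CAT(0) inequality $\norm{\exponinv{\xk}(q) - \exponinv{\xk}(\xast)}_{\xk}^2 \leq \dist(q,\xast)^2$, produces the key comparison
\[
    \norm{\zx[k-1] - \exponinv{\xk}(\xast)}_{\xk}^2 \leq \norm{\zybar[k-1][k-1]-\exponinv{\yk[k-1]}(\xast)}_{\yk[k-1]}^2 + (\zeta{2\D}-1)\norm{\zybar[k-1][k-1]}_{\yk[k-1]}^2.
\]
The analogous Taylor argument along the geodesic from $\xk$ to $\yk$ controls the comparison between $\norm{\zx - \exponinv{\xk}(\xast)}_{\xk}^2$ and $\norm{\zy - \exponinv{\yk}(\xast)}_{\yk}^2$ up to a penalty of order $(\zeta{2\D}-1)\norm{\exponinv{\xk}(\yk)}^2$. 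Substituting these inequalities and the identity $\norm{\zx[k-1]}_{\xk}^2 = (1-t)^2 \norm{\zybar[k-1][k-1]}_{\yk[k-1]}^2$ into the lemma's target inequality, the $(\xi-1)$-weighted non-$\xast$ terms absorb exactly the two geometric penalties, reducing the claim to an elementary algebraic inequality in $t$, $\Deltak$, and the $\Tansp{\xk}\M$ norms.

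\emph{Main obstacle.} The delicate point is that both transitions (from $\yk[k-1]$ and from $\yk$) independently produce $(\zeta{2\D}-1)$-proportional penalties that must be simultaneously absorbed by the single $(\xi-1)$ coefficient, and the factor of $4$ in $\xi = 4\zeta{2\D} - 3$ is the minimal value for which this absorption succeeds after accounting for the cross-terms arising from $B_1-B_2$ and $D_1-D_2$ under parallel transport. Crucially, the coupling relation (forcing $\zx[k-1]$ and $\exponinv{\xk}(\yk[k-1])$ to be collinear) and the algorithmic definition $\zx = \zx[k-1] - \etak\vkx[k]$ with $\vkx[k] = -\exponinv{\xk}(\yk)/\lambda$ are what make the cross-terms combine favorably; without the projection in Line~\ref{line:dual_projecting_step} of \cref{alg:accelerated_gconvex} (which bounds $\norm{\zybar[k-1][k-1]} \leq \D$ and keeps $q$ within a $2\D$-ball of $\xast$), the constant $\zeta{2\D}$ could not be used and the absorption would fail.
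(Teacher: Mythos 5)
Your overall architecture (project the dual onto the ball, transfer both potentials to $\Tansp{\xk}\M$, compare via the Hessian bounds of \cref{fact:hessian_of_riemannian_squared_distance}, absorb the distortion into the $(\xi-1)$-weighted terms) is the same as the paper's, and Stages 1 and 2 and the first comparison of Stage 3 are correct — the first comparison is exactly the second part of \cref{corol:moving_quadratics:exact_approachment_recession} applied to the coupling geodesic from $\yk[k-1]$ to $\expon{\yk[k-1]}(\zybar[k-1][k-1])$. The gap is in the second transition, from $\xk$ to $\yk$. You claim its penalty is of order $(\zeta{2\D}-1)\norm{\exponinv{\xk}(\yk)}^2$. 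A penalty proportional to the prox displacement $\norm{\exponinv{\xk}(\yk)}^2=\lambda^2\norm{\vkx[k]}^2$ cannot be absorbed anywhere inside this lemma: none of the remaining terms controls $\norm{\exponinv{\xk}(\yk)}$ (in particular $\norm{\zx}^2=\norm{\zx[k-1]-\etak\vkx[k]}^2$ can be arbitrarily small while $\norm{\vkx[k]}$ is large), and the only slack proportional to $\norm{\vkx[k]}^2$ lives in \cref{lemma:the_suitable_approximate_prox_has_the_property_needed_in_alg1} and \cref{thm:psi_is_lyapunov}, outside this statement. So as written your Stage 3 would fail at the absorption step.

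The missing idea is the specific form of the comparison lemma for the second transition (\cref{lemma:moving_quadratics:inexact_approachment_recession} in the paper): one decomposes the dual vector as $\zx = a^x + z^x$ with $z^x=-\etak\vkx[k]$ \emph{collinear} with $\exponinv{\xk}(\yk)$ (indeed $\yk=\expon{\xk}(r z^x)$ with $r=\xi\lambda/\ak<5/6$) and $a^x=\zx[k-1]$, and integrates the Hessian bound along that geodesic. The collinear part contributes no penalty; the distortion is charged entirely to the transverse part, giving a penalty $\frac{\xi-1}{2}\cdot\frac{r}{1-r}\norm{\zx[k-1]}^2$ — proportional to $\norm{\zx[k-1]}^2$, not to $\norm{\exponinv{\xk}(\yk)}^2$. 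This is then cancelled against the surplus $\frac{3(\xi-1)}{2}\left(\frac{1}{1-\tauk}-1\right)\norm{\zx[k-1]}^2$ produced by the first transition (where $\tauk=\ak/(\ak+\Ak[k-1])$ and one uses $\xi-\zeta{2\D}\ge\frac34(\xi-1)$), and the cancellation condition $\tauk\ge\frac34 r$ is exactly the step-size inequality $\ak[k][2]\ge\frac{3\lambda}{4}\xi(\ak+\Ak[k-1])$ of \cref{lemma:inequalities_on_ak}. Your "main obstacle" paragraph correctly senses that the collinearity and the choice of $\xi$ are what make the cross-terms work, but the proposal does not supply this matching of the two $\norm{\zx[k-1]}^2$-proportional quantities, which is the actual content of the proof.
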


\begin{proof}
    Firstly, by the projection step in Line \ref{line:dual_projecting_step}, we have 
    \begin{equation}\label{eq:aux_translating_w_o_penalty_1}
        \norm{\zy[k-1][k-1] - \xast}_{\yk}^2 \geq \norm{\zybar[k-1][k-1] - \xast}_{\yk}^2 \quad \quad  \text{ and } \quad \quad  (\xi-1)\norm{\zy[k-1][k-1]}_{\yk}^2 \geq (\xi-1)\norm{\zybar[k-1][k-1]}_{\yk}^2 
    \end{equation}
since the operation is a simple Euclidean projection onto the closed ball $\ball(0,\D)$ in $\Tansp{\yk}\M$ .
    By the second part of \cref{corol:moving_quadratics:exact_approachment_recession}, $y=\xk$ and $x=\yk[k-1]$ and by \eqref{eq:x_k_is_not_far_from_X}, we have $\circled{1}$ below
\begin{align}\label{eq:aux_translating_w_o_penalty_2}
 \begin{aligned}
     \norm{\zybar[k-1][k-1]&-\xast}_{\yk[k-1]}^2 + (\xi-1)\norm{\zybar[k-1][k-1]}_{\yk[k-1]}^2 \circled{1}[\geq] 
     \norm{\zx[k-1]-\xast}_{\xk}^2  + (\zeta{2\D}-1)\norm{\zx[k-1]}_{\xk}^2 + (\xi-\zeta{2\D})\norm{\zybar[k-1][k-1]}_{\yk[k-1]}^2 \\
     &\circled{2}[\geq] \norm{\zx[k-1]-\xast}_{\xk}^2  + (\xi-1)\norm{\zx[k-1]}_{\xk}^2 + (\xi-\zeta{2\D})\left(\left( \frac{\Ak[k-1]+\ak}{\Ak[k-1]}\right)^2-1\right)\norm{\zx[k-1]}_{\xk}^2 \\
     &\circled{3}[\geq] \norm{\zx[k-1]-\xast}_{\xk}^2  + (\xi-1)\norm{\zx[k-1]}_{\xk}^2 + \frac{3(\xi-1)}{2}\left(\frac{1}{1-\tauk}-1\right)\norm{\zx[k-1]}_{\xk}^2,
 \end{aligned}
\end{align}
    and $\circled{2}$ uses the definition of $\xk$. In $\circled{3}$, we used the definition of $\xi = 4\zeta{2\D} - 3$ that implies $\xi-\zeta{2\D} \geq \frac{3}{4}(\xi-1)$ and for $\newtarget{def:aux_tau_k}{\tauk}\defi\ak/(\ak+\Ak[k-1])$ we have  that $(1+\frac{\ak}{\Ak[k-1]})^2-1\geq \frac{2\ak}{\Ak[k-1]} = 2(\frac{1}{1-\tauk}-1)$. Now, using the second part of \cref{lemma:moving_quadratics:inexact_approachment_recession} with $y=\yk$, $x=\xk$ $z^x=-\etak \vkx$, $a^x = \zx[k-1]$, so that $z^x+a^x = \zx$ and $z^y + a^y = \zy$ and 
    \begin{equation}\label{ineq:r_is_less_than_1}
        r=\frac{\norm{\exponinv{\xk}(\yk)}}{\norm{z^x}} = \frac{\lambda\norm{\vkx[k]}}{\etak \norm{\vkx[k]}}  = \frac{\xi\lambda}{\ak} = \frac{5\xi}{2k+64\xi} < 5/6 < 1. 
    \end{equation}
    Note that by the choice of parameters and the fact that $r<1$, the assumptions in \cref{lemma:moving_quadratics:inexact_approachment_recession} are satisfied. Thus, the following holds
\begin{align}\label{eq:aux_translating_w_o_penalty_3}
 \begin{aligned}
     \norm{\zx-\xast}_{\xk}^2 + (\xi-1)\norm{\zx}_{\xk}^2 + \frac{\xi-1}{2}\left(\frac{r}{1-r}\right)\norm{\zx[k-1]}^2 \geq \norm{\zy-\xast}_{\yk}^2 + (\xi-1)\norm{\zy}_{\yk}^2.
 \end{aligned}
\end{align}
    Hence, combining \eqref{eq:aux_translating_w_o_penalty_1}, \eqref{eq:aux_translating_w_o_penalty_2} and \eqref{eq:aux_translating_w_o_penalty_3} we obtain that it is enough to prove
\[
    -(1-\Deltak)\left(\frac{r}{1-r}\right) + 3\left(\frac{1}{1-\tauk}-1\right) \geq 0,
\] 
    The proof will be finished if we prove the result for $\Deltak=0$. If we use this last inequality, and the fact that for $r\leq 5/6$, we have $\frac{r}{1-r} \leq 3\left(\frac{1}{1-3r/4}-1\right)$, we deduce that it suffices to show $\tauk \geq \frac{3}{4}r$ to conclude
    \[
 \frac{r}{1-r}  \leq  3\left(\frac{1}{1-3r/4}-1\right) \leq 3\left(\frac{1}{1-\tauk}-1\right).
    \] 
    Such inequality, namely $\tauk \geq \frac{3}{4}r$, is equivalent to $\frac{\ak[k][2]}{\lambda} \geq \frac{3\xi}{4}(\ak+ \Ak[k-1])$ and it holds by \cref{lemma:inequalities_on_ak}.
    
\end{proof}

Finally, we use \cref{thm:psi_is_lyapunov} to show the final convergence rates. The proof will also use \cref{lemma:where_prox_goes} that is stated and proved after the proof of the theorem.

\begin{proof}\textbf{of \cref{thm:g_convex_acceleration}}. \linkofproof{thm:g_convex_acceleration}
        Given the inequality $(1-\Deltak)\psik \leq \psik[k-1]$, proven in \cref{thm:psi_is_lyapunov}, we can use $\psik$ as a Lyapunov function in order to prove convergence rates of \cref{alg:accelerated_gconvex}. It follows straightforwardly by definition of $\psik$, in the following way
    \begin{align*}
     \begin{aligned}
         \f(\yk)-\f(\xast) &\leq \frac{\psik}{\Ak} \leq \prod_{i=1}^{k}(1-\Deltak[i])^{-1}\frac{\psik[0]}{\Ak} \circled{1}[\leq] \frac{2\psik[0]}{\Ak} \circled{2}[=] \bigol{\frac{\RInit[2]}{\lambda}\left(\frac{\Ak[0]}{\Ak} + \frac{\lambda}{\Ak}\right)} \\
         &= \bigol{\frac{\RInit[2]}{\lambda}\left(\frac{\xi}{\frac{k^2 + \xi k}{\xi} + \xi} + \frac{1}{\frac{k^2 + \xi k}{\xi} + \xi}\right)} \\
         &= \bigol{\frac{\RInit[2]}{\lambda}\left(\frac{\xi^2}{k^2 + \xi k + \xi^2} \right)} \circled{3}[=] \bigol{\frac{\RInit[2]}{\lambda k^2}\cdot\zetad^2}.
   \end{aligned}
\end{align*}
        In $\circled{1}$, we used $\prod_{i=1}^{k} (1-\Deltak) = \prod_{i=1}^{k} \frac{i(i+2)}{(i+1)^2} = \frac{k+2}{2(k+1)} \geq \frac{1}{2}$. Now for $\circled{2}$, we note the following, which is analogous to applying smoothness of the Riemannian Moreau envelope to show that an inexact prox step serves as a warm start:
\begin{align*}
     \begin{aligned}
         \f(\yk[0]) - \f(\xast) &\leq \f(\yk[0]) + \frac{1}{2\lambda} \dist(\xInit, \yk[0])^2 - \f(\xast)   \circled{4}[\leq]  \min_{y\in\X}\{\f(y) + \frac{1}{2\lambda}\dist(\xInit, y)^2\}  + \sigmak[0]  - \f(\xast) \\
         &\circled{5}[\leq] \frac{1}{2\lambda}\dist(\xInit, \xast)^2 + \frac{1}{78\lambda} \dist(x_0, \ykast[0])^2  \circled{6}[=] \bigo{\RInit^2/\lambda}.
   \end{aligned}
\end{align*}
    We used $\sigmak[0]$-optimality of $\yk[0]$ in $\circled{4}$. In $\circled{5}$ we substituted the value of $\sigmak[0]$ and we set $y\gets \xast$ in the minimum. 
    We used \cref{lemma:where_prox_goes} for $\circled{6}$. In $\circled{2}$, we also used $\frac{\xi-1}{2}\norm{\zy[0][0]}_{\yk[0]}^2=0$ and 
\begin{align*}
     \begin{aligned}
         \norm{\zy[0][0]-\xast}_{\yk[0]} = \dist(y_0, x^\ast) \leq\dist(\yk[0], \ykast[0]) +  \dist(\ykast[0], \xast) \circled{7}[\leq] \sqrt{2\lambda(\hk[0](\yk[0])-\hk[0](\ykast[0]))} + \RInit \circled{8}[=] \bigo{\RInit},
   \end{aligned}
\end{align*}
 where $\circled{7}$ holds by   $1$-strong convexity of $\lambda\hk[0]$ and \cref{lemma:where_prox_goes}, while in $\circled{8}$ we used the $\sigmak[0]$-optimality of $\yk[0]$ and \cref{lemma:where_prox_goes}.

     In $\circled{3}$, we used $\xi = \bigo{\zetad}$ and we dropped some terms in the denominator. 
        This means that the number of iterations is $\bigo{\zetad \sqrt{\frac{\RInit[2]}{\lambda\epsilon}}}$ if we want the right hand side to be bounded by $\epsilon$.

    The algorithm and analysis for strongly g-convex and smooth functions follows directly by applying the reduction in \citep[Theorem 7]{martinez2020global} to \cref{alg:accelerated_gconvex}. We denote this algorithm by \newtarget{def:Riemacon_SC}{\riemaconsc{}}$(\X, \xInit, \f, \lambda, \epsilon)$, where $\X$ is the feasible set, $\xInit$ is the initial point, $\f$ is the function to optimize, $\lambda$ is the implicit gradient descent learning rate, and $\epsilon$ is an optional parameter specifying the desired accuracy. Although the statement of the reduction in this paper assumes an $\L$-smooth function $f:\M\to \R$ to be optimized has a global minimizer in an unconstrained problem, the same proof of this theorem works if we have a $\mu$-strongly g-convex function $\f$ defined over an open set containing a closed g-convex set $\X$ and a minimizer $\xast$ of this function restricted to $\X$. The algorithm runs the algorithm for g-convex smooth minimization for $\newtarget{def:time_alg_of_just_convex_for_reduction}{\timens}(\mu, R)$, where this is defined as the number of iterations needed by the non-strongly g-convex algorithm to reach accuracy $\mu R^2/8$ if the initial distance is upper bounded by $R$. In such a case it guaranteed that the distance to the minimizer is reduced by half, and we restart the algorithm and run it again with the initial distance parameter equal to $R/2$, and so on. This happens $\bigo{\log(\mu \RInit[2]\epsilon)}$ times if we want to achieve accuracy $\epsilon$ from an initial distance $\RInit$. Thus, the total complexity in number of iterations can be bounded by $O(\timens(\mu, \RInit)\log(\mu \RInit[2]/\epsilon))$, since all initial distances are $\leq \RInit$. In our case, since we optimize over the set $\X$ with diameter $\D$, so it is $\timens(\mu,R) = \bigo{\zetad(\lambda\mu)^{-1/2} + 1}$, and the total number of iterations is $\bigo{(\zetad (\lambda\mu)^{-1/2} + 1)\log(\mu \RInit[2]/\epsilon)}$. We note that the reverse reduction in \citep{martinez2020global} yields extra geometric penalties but this one does not. 
\end{proof}

We now state and prove the lemma we used for \cref{thm:g_convex_acceleration}.

\begin{lemma}\label{lemma:where_prox_goes}
    For all $k \geq 0$, we have $\dist(\xk,\ykast) \leq \dist(\xk, \xast)$ and $\dist(\ykast,\xast) \leq \dist(\xk, \xast)$.
\end{lemma}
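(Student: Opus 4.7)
The plan is to prove the two inequalities separately, using the defining property of $\ykast$ as an optimizer in two different ways: value optimality for the first, and first-order optimality for the second.

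For the first inequality, I would just use that $\ykast \in \X$ is a minimizer of $\hk$ over $\X$, while $\xast \in \X$. So $\hk(\ykast) \leq \hk(\xast)$, which written out gives
\[
    \f(\ykast) + \tfrac{1}{2\lambda}\dist(\xk, \ykast)^2 \le \f(\xast) + \tfrac{1}{2\lambda}\dist(\xk, \xast)^2.
\]
Since $\xast$ minimizes $\f$ over $\X$ and $\ykast \in \X$, we have $\f(\ykast) \ge \f(\xast)$, so the $\f$-terms can be dropped in favor of enlarging the right-hand side, yielding $\dist(\xk, \ykast)^2 \leq \dist(\xk, \xast)^2$.

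For the second inequality, I would combine first-order optimality of $\ykast$ on $\X$ with the Hadamard (CAT$(0)$) comparison/cosine inequality. Since $\X$ is g-convex and $\hk$ is g-convex, optimality of $\ykast$ gives, for every $y \in \X$, the variational inequality $\innp{\grad \hk(\ykast), \exponinv{\ykast}(y)}_{\ykast} \geq 0$. Using $\grad \Phi_{\xk}(\ykast) = -\exponinv{\ykast}(\xk)$ from \cref{fact:hessian_of_riemannian_squared_distance}, so that $\grad \hk(\ykast) = \grad \f(\ykast) - \lambda^{-1}\exponinv{\ykast}(\xk)$, and applying this with $y = \xast$, I get
\[
    \innp{\exponinv{\ykast}(\xk), \exponinv{\ykast}(\xast)}_{\ykast}
    \le \lambda\,\innp{\grad \f(\ykast), \exponinv{\ykast}(\xast)}_{\ykast}
    \le \lambda (\f(\xast) - \f(\ykast)) \le 0,
\]
where the middle step uses g-convexity of $\f$ and the last step uses $\xast = \argmin_{y \in \X}\f(y)$ and $\ykast \in \X$.

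Finally, I would apply the Riemannian cosine-law inequality (\cref{lemma:cosine_law_riemannian}) at the vertex $\ykast$ of the geodesic triangle $(\ykast, \xk, \xast)$. In the Hadamard setting one has
\[
    \dist(\xk,\xast)^2 \ge \dist(\ykast,\xk)^2 + \dist(\ykast,\xast)^2 - 2\innp{\exponinv{\ykast}(\xk),\exponinv{\ykast}(\xast)}_{\ykast},
\]
and combining this with the previous non-positivity bound yields $\dist(\xk,\xast)^2 \ge \dist(\ykast,\xast)^2$. The main point to be careful about is the direction of the comparison inequality in non-positive curvature; since Hadamard triangles are thinner than their Euclidean models, the $\ge$ direction is the right one here, which is exactly what makes the non-positive inner-product estimate transfer into a distance bound. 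Both inequalities then hold for every $k \ge 0$, with no use of smoothness of $\f$ or of the specific value of $\lambda$.
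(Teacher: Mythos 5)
Your proof is correct. The first inequality is handled exactly as in the paper: both arguments combine $\hk(\ykast)\leq\hk(\xast)$ with $\f(\ykast)\geq\f(\xast)$ and cancel the $\f$-terms. For the second inequality you take a genuinely different route. The paper argues through function values: $\lambda\hk$ is $1$-strongly g-convex (since $\Phi_{\xk}$ is, on a Hadamard manifold), so quadratic growth at the constrained minimizer gives $\dist(\ykast,\xast)^2\leq 2\lambda(\hk(\xast)-\hk(\ykast))$, and expanding $\hk$, dropping $-\dist(\xk,\ykast)^2\leq 0$, and using $\f(\xast)\leq\f(\ykast)$ finishes. You instead work at first order: the variational inequality at $\ykast$ plus g-convexity of $\f$ and optimality of $\xast$ yield $\innp{\exponinv{\ykast}(\xk),\exponinv{\ykast}(\xast)}\leq 0$, and the Hadamard cosine law (\cref{lemma:cosine_law_riemannian} with $\delta{\D}=1$) converts this into the distance bound. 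The ingredients are ultimately the same (the paper's quadratic-growth step hides the same variational inequality, and the strong g-convexity of $\Phi_{\xk}$ is the cosine law in disguise), but your decomposition has two small payoffs: it makes explicit the obtuse-angle picture at $\ykast$, and it actually proves the stronger Pythagorean-type bound $\dist(\xk,\xast)^2\geq\dist(\xk,\ykast)^2+\dist(\ykast,\xast)^2$, which delivers both claimed inequalities simultaneously. The paper's version is marginally more economical and, phrased with subgradients as in \cref{lemma:the_suitable_approximate_prox_has_the_property_needed_in_alg1}, extends verbatim to nondifferentiable g-convex $\f$, whereas your variational inequality as written uses differentiability of $\f$.
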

\begin{proof}
    Using $\xast\in\argmin_{x\in\X}\f(x)$ and $\ykast = \argmin_{y\in\X} \hk(y)$, we have: 
    \[
        \dist(\xk, \ykast)^2 -  \dist(\xk, \xast)^2 \leq 2\lambda \f(\ykast) + \dist(\xk, \ykast)^2 - 2\lambda\f(\xast) -  \dist(\xk, \xast)^2= 2\lambda (\hk(\ykast)- \hk(\xast)) \leq 0.
    \] 
    The second inequality is deduced from $1$-strong convexity of $\lambda \hk$, which holds by \cref{fact:hessian_of_riemannian_squared_distance} since we are in a Hadamard manifold, as well as the definition of $\hk$, and the fact $\xast \in \argmin_{x\in\X}\f(x)$:
    \[
    \dist(\ykast, \xast)^2 \leq 2\lambda(\hk(\xast) - \hk(\ykast)) \leq 2\lambda\f(\xast) + \dist(\xk, \xast)^2 - 2\lambda \f(\ykast) \leq  \dist(\xk, \xast)^2.
    \] 
\end{proof}

\section[Convergence of boosted Riemacon (Algorithm \ref{alg:instance_of_riemacon})]{Convergence of boosted Riemacon (\texorpdfstring{\cref{alg:instance_of_riemacon}}{Algorithm \ref{alg:instance_of_riemacon}})} \label{sec:proofs_ball_oracle}

We start by showing that the iterates of \cref{alg:instance_of_riemacon} stay reasonably bounded, which is crucial in order to bound geometric penalties.

\begin{proposition}\label{prop:we_go_no_farther_than_2R}
    The iterates $x_k$ of \cref{alg:instance_of_riemacon} satisfy $\dist(x_k, \xastg) \leq 2\RR$.
\end{proposition}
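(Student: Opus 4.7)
The plan is to prove the claim by induction on $k$. The base case $\dist(x_0, \xastg) = \dist(\xInit, \xastg) \leq \RR$ is immediate from the assumption. If Line \ref{line:first_if_in_instanced_alg} triggers, the algorithm produces only $x_1 \in \ball(\xInit, \RR)$, giving $\dist(x_1, \xastg) \leq 2\RR$ by the triangle inequality. Otherwise the if-clause fails, so the chosen $\D$ satisfies $\D < 2\RR$ (if $\D \geq 2\RR$ then by monotonicity of $\zeta{\cdot}$ we would get $\D = (46\RR\abs{\kmin}\zeta{2\D})^{-1} \leq (46\RR\abs{\kmin}\zeta{2\RR})^{-1} < 2\RR$, a contradiction), and we proceed inductively.

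Assume $\dist(x_{k-1}, \xastg) \leq 2\RR$, so that $\X_k = \ball(x_{k-1}, \D/2) \subset \ball(\xastg, 3\RR)$ and the hypotheses on $\f$ hold on $\X_k$. Let $\bar{x}_k := \argmin_{x \in \X_k} \f(x)$ denote the exact constrained minimizer. The first step bounds the approximation error: since $\riemaconsc$ returns $x_k$ with $\f(x_k) - \f(\bar{x}_k) \leq \epsilonp$ and $\f$ is $\mu$-strongly g-convex on $\X_k$, strong convexity gives $\dist(x_k, \bar{x}_k) \leq \sqrt{2\epsilonp/\mu} \leq \RR/\TT$, using $\epsilonp \leq \mu\RR^2/(2\TT^2)$.

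The key step, and the main obstacle, is to show $\dist(\bar{x}_k, \xastg) \leq \dist(x_{k-1}, \xastg)$. If $\xastg \in \X_k$, this is trivial since then $\bar{x}_k = \xastg$. Otherwise $\bar{x}_k$ must lie on the boundary of $\X_k$, that is $\dist(\bar{x}_k, x_{k-1}) = \D/2$ (an interior minimum would yield $\nabla \f(\bar{x}_k) = 0$ and hence $\bar{x}_k = \xastg$ by strong g-convexity). The first-order optimality condition for the constrained problem, combined with the gradient formula $\nabla \Phi_{x_{k-1}}(\bar{x}_k) = -\exponinv{\bar{x}_k}(x_{k-1})$ from \cref{fact:hessian_of_riemannian_squared_distance}, gives $\nabla \f(\bar{x}_k) = \nu\, \exponinv{\bar{x}_k}(x_{k-1})$ for some $\nu > 0$ (with $\nu > 0$ since $\nabla \f(\bar{x}_k) \neq 0$). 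Applying g-convexity of $\f$ at $\bar{x}_k$ evaluated at $\xastg$ yields $\innp{\nabla \f(\bar{x}_k), \exponinv{\bar{x}_k}(\xastg)} \leq \f(\xastg) - \f(\bar{x}_k) \leq 0$, whence $\innp{\exponinv{\bar{x}_k}(x_{k-1}), \exponinv{\bar{x}_k}(\xastg)} \leq 0$. The Hadamard cosine inequality applied to the geodesic triangle with vertex $\bar{x}_k$ then gives
\[
    \dist(x_{k-1}, \xastg)^2 \geq \dist(\bar{x}_k, x_{k-1})^2 + \dist(\bar{x}_k, \xastg)^2 - 2\innp{\exponinv{\bar{x}_k}(x_{k-1}), \exponinv{\bar{x}_k}(\xastg)} \geq \frac{\D^2}{4} + \dist(\bar{x}_k, \xastg)^2,
\]
from which the claim follows (in fact strictly). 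This step crucially relies on the non-positive curvature of $\M$ to get the cosine inequality with the correct sign.

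Combining the two bounds by the triangle inequality gives $\dist(x_k, \xastg) \leq \RR/\TT + \dist(x_{k-1}, \xastg)$, and unrolling this recursion from $k=0$ yields $\dist(x_k, \xastg) \leq \RR + k\RR/\TT \leq 2\RR$ for all $k \leq \TT$, closing the induction.
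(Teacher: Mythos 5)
Your proof is correct and follows essentially the same route as the paper's: non-expansiveness of the exact ball minimizer toward $\xastg$ via the cosine law in non-positive curvature, plus the $\sqrt{2\epsilonp/\mu} \leq \RR/\TT$ error bound from strong g-convexity, telescoped over the $\TT$ iterations. The only (minor) difference is that you derive the key inequality $\innp{\exponinv{\bar{x}_k}(x_{k-1}), \exponinv{\bar{x}_k}(\xastg)} \leq 0$ from the first-order optimality condition on the boundary combined with g-convexity, whereas the paper argues it geometrically from the fact that the geodesic from the constrained minimizer to $\xastg$ immediately leaves the ball; both yield the same obtuse-angle condition and the rest of the argument is identical.
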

\begin{proof}
    We first show that the optimizer $x^\ast_k$ in the ball $\X_k$ is no farther than the center of $\X_k$ to $\xastg$, that is, $\dist(x^\ast_k, \xastg) \leq \dist(x_{k-1}, \xastg)$. We assume $\xastg$ is not in the ball because otherwise the property holds trivially. The geodesic segment joining $x^\ast_k$ and $\xastg$ does not contain any other point of the ball, since otherwise by strong convexity we would have that the function value of one such point would be lower than $\f(x^\ast_k)$. This fact implies that the angle between $\exponinv{x^\ast_k}(\xastg)$ and $\exponinv{x^\ast_k}(x_{k-1})$ is obtuse, and so $\circled{1}$ holds below and by using \cref{lemma:cosine_law_riemannian} we conclude $\dist(x^\ast_k, \xastg) \leq \dist(x_{k-1}, \xastg)$:
\begin{align*}
 \begin{aligned}
     0 &\circled{1}[\geq] 2\innp{\exponinv{x_{k}^\ast}(\xastg), \exponinv{x_{k}^\ast}(x_{k-1})} \geq \dist(x_{k}^\ast, \xastg)^2 + \deltar \cdot \dist(x_k^\ast, x_{k-1})^2 - \dist(x_{k-1}, \xastg)^2 \\
     &\geq \dist(x_{k}^\ast, \xastg)^2  - \dist(x_{k-1}, \xastg)^2.
   \end{aligned}
\end{align*}

    If instead of optimizing exactly in the ball we obtain a close approximation, the iterates will not get very far from $\xastg$. Indeed, by $\mu$-strong convexity, if $x_k$ is an $\epsilonp$-minimizer of $\f$ in $\X_k$, we have that $\dist(x^\ast_k, x_k) \leq \sqrt{\frac{2\epsilonp}{\mu}} \leq \frac{\RR}{\TT}$, where we used the definition of $\epsilonp = \min\{\frac{\D\epsilon}{8\RR},\frac{\mu\RR^2}{2\TT^2}\}$ in the last inequality. Consequently, applying the non-expansiveness and this last inequality recursively, we obtain
    \[
        \dist(x_{\TT}, \xastg) \leq \dist(x_{\TT}^\ast, \xastg) + \dist(x_{\TT}^\ast, x_{\TT}) \leq \dist(x_{\TT-1}, \xastg) + \frac{\RR}{\TT} \leq \dots \leq \dist(x_{0}, \xastg) + \RR \leq 2\RR.
    \] 
\end{proof}

Before we prove \cref{thm:instantiation}, let's discuss about the initialization of $\D$. As we explain in \cref{sec:subroutine}, we can apply the subroutine in \citep[Section 6]{criscitiello2022negative} for any value of $\D$ that satisfies (notice $\D$ is twice the radius of the ball): 
\begin{equation}\label{eq:aux_D_ineq}
    \D \leq (46\RR\abs{\kmin}\zeta{D})^{-1},
\end{equation}
If $\D=2\RR$ satisfies the inequality, then the algorithm uses this value. If it is not satisfied, then for any value $\D\geq 0$ that satisfies the inequality it must be $\D < 2\RR$, so we assume that this inequality holds for the rest of the argument. Indeed, it is a consequence of the function $x^2\coth(x)$ being monotonously increasing for $x\geq 0$ and that given the definition of $\zeta{\D}=\D\sqrt{\abs{\kmin}}\coth(\D\sqrt{\abs{\kmin}})$, we have that inequality \eqref{eq:aux_D_ineq} is equivalent to $\D^2\abs{\kmin}\coth(\D\sqrt{\abs{\kmin}}) \leq (46\RR\sqrt{\abs{\kmin}})^{-1}$.  In this case, the larger $\D$ is, the faster the algorithm runs. So one could solve the $1$-dimensional problem $\D = (46\RR\abs{\kmin}\zeta{\D})^{-1}$ on $\D$ in order to obtain the best guarantee. On the other hand, we can provide the simple bound on this $1$-dimensional problem $\D = 1/(70\RR\abs{\kmin})$ which would only lose a constant in the final convergence rates. We show now how this is indeed a bound. Let $x$ be $\D \sqrt{\abs{\kmin}}$, for some $\D$ satisfying inequality \eqref{eq:aux_D_ineq} and let $S$ be the set of all such $x \geq 0$. Because we want $x^2\coth(x) \leq (46\RR\sqrt{\abs{\kmin}})^{-1}$ and the right hand side is upper bounded by $\leq 1/(23x)$, then by monotonicity it must be $S \subset [0, 1/4]$. It holds that for this interval the fourth derivative of $x^2 \coth(x) \leq 0$, which along with its third order Taylor expansion yields $\circled{1}$ below, so the points satisfying $\circled{3}$ below are in $S$ and we can use $\D = \frac{x}{\sqrt{\abs{\kmin}}} = \frac{1}{70\RR\abs{\kmin}} \leq \frac{3}{4\cdot 46\RR\abs{\kmin}}$ as our simple-to-compute bound:
\[
    x^2\coth(x) \circled{1}[\leq] x+\frac{x^3}{3} \circled{2}[\leq] \frac{4}{3}x \circled{3}[\leq] \frac{1}{46\RR\sqrt{\abs{\kmin}}}.
\] 
where in $\circled{2}$ we used $x < 1$ for all $x\in S$. Now, we can proceed to prove the theorem.

\begin{proof}\textbf{of \cref{thm:instantiation}}. \linkofproof{thm:instantiation}
    If $\D = 2\RR$, which is the case in which the condition in Line \ref{line:if_D_equal_R_one_single_Riemacon} of \cref{alg:instance_of_riemacon} is satisfied, then we just need to call \cref{alg:accelerated_gconvex} once in the corresponding ball $\ball(\xInit, \RR)$ and we obtain rates $\bigotilde{\zetar^2 \sqrt{\frac{\L}{\mu}}}$. So from now on we assume $\D < 2\RR$. Let $\TT = \lceil \frac{4\RR}{\D} \ln(\frac{\L\RR^2}{\epsilon}) \rceil$ and let $\epsilonp = \min\{\frac{\D\epsilon}{8\RR},\frac{\mu\RR^2}{2\TT^2}\}$. Since every time we call \cref{alg:accelerated_gconvex} we do it over a ball of diameter $\D$, we still use the notation $\zetad \defi \zeta{\D}$ to refer to the geometric constant associated to the sets $\X_k$, for every $k\geq 1$. Recall that we use $\zetar \defi \zeta{\RR} = \RR\sqrt{\abs{\kmin}}\coth(\RR\sqrt{\abs{\kmin}}) \in [\RR\sqrt{\abs{\kmin}}, \RR\sqrt{\abs{\kmin}}+1]$.

    By definition, it is $\D \leq (46\RR \abs{\kmin} \zetad)^{-1}$. Using $2\RR > \D$ and $\zetad \in [ \D\sqrt{\abs{\kmin}}, \D\sqrt{\abs{\kmin}}+1]$, we conclude $\D\leq 1/\sqrt[3]{46}\sqrt{\abs{\kmin}}\leq  1/\sqrt{\abs{\kmin}}$ and $\zetad \leq \D\sqrt{\abs{\kmin}} +1 \leq 2$.  Since $\zetad=\bigo{1}$, the subroutine in Line \ref{line:subroutine} takes $\bigotilde{1}$ gradient oracle calls by the analysis in \cref{sec:subroutine} and thus, Line \ref{line:riemacon_w_subroutine} of \cref{alg:instance_of_riemacon} takes $\bigotilde{\sqrt{\frac{\L}{\mu}}\log(\frac{1}{\epsilonp})}$ gradient oracle calls to optimize in the ball $\X_k$ of diameter $\D$, for any $k$.
Recall that we denote the global optimizer of $\f$ by $\xastg$. Define the g-convex combination 
\[
    \tilde{x}_{k} = \expon{x_{k-1}}\left(\frac{\D}{4\RR}\exponinv{x_{k-1}}(\xastg)\right) = \expon{x_{k-1}}\left((1-\frac{\D}{4\RR})x_{k-1} + \frac{\D}{4\RR}\xastg\right).
\] 
Since $\X_k$ is a ball of radius $\D/2$ and by \cref{prop:we_go_no_farther_than_2R}, it is $\dist(x_k, x^\ast) \leq 2\RR$, we have $\tilde{x}_{k} \in \X_k$. Consequently, we have
\[
    \f(x_{k}) \circled{1}[\leq] \f(\tilde{x}_{k}) + \epsilonp \circled{2}[\leq] (1-\frac{\D}{4\RR})\f(x_{k-1}) + \frac{\D}{4\RR} \f(\xastg) + \epsilonp,
\] 
where $\circled{1}$ is due to the guarantees of the optimization in the ball and the fact that $\tilde{x}_k \in \X_k$, $\circled{2}$ holds due to g-convexity. Subtracting $\f(\xastg)$ in both sides and rearranging, we obtain 
\[
\f(x_{k})-\f(\xastg) \leq (1-\frac{\D}{4\RR})(\f(x_{k-1}) - \f(\xastg)) + \epsilonp.
\] 
Applying this inequality recursively, we obtain
\begin{align*}
 \begin{aligned}
     \f(x_{\TT})-\f(\xastg) &\leq (1-\frac{\D}{4\RR})^{\TT}(\f(\xInit) - \f(\xastg)) + \epsilonp\sum_{i=0}^{\TT-1} (1-\frac{\D}{4\RR})^i \\
     &\circled{1}[\leq] \exp(-\frac{\D \TT}{4\RR})\frac{\L\RR^2}{2} + \frac{4\RR}{\D}\epsilonp\\
     &\leq \frac{\epsilon}{2} + \frac{\epsilon}{2} = \epsilon.
   \end{aligned}
\end{align*}
    Above, we used $1-x \leq \exp(-x)$, we used smoothness to bound $\f(\xInit) - \f(\xastg) \leq \frac{\L\dist(\xInit, \xastg)^2}{2}$, we bounded $\sum_{i=0}^{\TT-1} (1-\frac{\D}{4\RR})^i \leq \sum_{i=0}^{\infty} (1-\frac{\D}{4\RR})^i = \frac{4\RR}{\D}$ and we used the values of $\epsilonp$ and $\TT$.  Finally, we compute the complexity of this algorithm. We have $\TT$ iterations taking $\bigotilde{\sqrt{\frac{\L}{\mu}}}$ gradient oracle queries each. Using the value of $\TT$ and $\D$, we obtain that in total, we call the gradient oracle $\bigotilde{\frac{\RR}{\D} \sqrt{\frac{\L}{\mu}}} = \bigotilde{\RR^2\abs{\kmin}\sqrt{\frac{\L}{\mu}}} = \bigotilde{\zetar^2\sqrt{\frac{\L}{\mu}}}$ times, in both of the suggested initializations for $\D$, cf. \cref{alg:instance_of_riemacon}.

    We conclude by studying the case in which $\f$ is not strongly convex. Assume there is a global optimizer $\xastg$ and as before let $\RR \geq \dist(\xInit, \xastg)$. Given $\epsilon > 0$, we use the regularizer $r(x) = \frac{\epsilon}{2\RR^2}\Phi_{\xInit}(x) = \frac{\epsilon}{2\RR^2}\dist(\xInit, x)^2$. Let $x^\ast_{\epsilon}$ be the minimizer of $\f+r$. By \citep[Lemma 21]{martinez2020global}, we have $\dist(\xInit, x^\ast_{\epsilon}) \leq \dist(\xInit, \xastg) \leq \RR$.  We run \cref{alg:instance_of_riemacon} on $\f+r$, which satisfies that the iterates of the algorithm and the subroutine go no farther than $2\RR+\D/2 < 3\RR$ from $x^\ast_{\epsilon}$. Indeed, the centers of the balls $\X_k$ are at a distance at most $2\RR$ from $x^\ast_{\epsilon}$ by \cref{prop:we_go_no_farther_than_2R} and each ball has radius $\D/2$. Recall that we are still optimizing over a Hadamard manifold. So in $\ball(x^\ast_{\epsilon}, 3\RR)$, we have that $\f+r$ is strongly g-convex with constant $\frac{\epsilon}{\RR^2}$. Moreover, its smoothness constant is $\zeta{3\RR}\cdot\frac{\epsilon}{\RR^2} + \L = \bigo{\zetar\cdot\frac{\epsilon}{\RR^2} + \L} $, by \cref{fact:hessian_of_riemannian_squared_distance}. Hence, the algorithm finds an $\epsilon/2$ minimizer $x_{T'}$ of $\f+r$ after $T'=\bigotilde{\zetar^2\sqrt{\zetar + \frac{\L\RR^2}{\epsilon}}}$ queries to the gradient oracle. By definition, it is $\dist(\xInit, \xastg) \leq \RR$ so $r(\xastg) \leq \frac{\epsilon}{2\RR^2} \cdot \RR^2 = \frac{\epsilon}{2}$ and thus $x_{T'}$ is an $\epsilon$-minimizer of $\f$:
    \[
        \f(x_{T'}) \leq \f(x_{T'})+r(x_{T'}) \leq \f(\xastg)+r(\xastg) + \frac{\epsilon}{2} \leq \f(\xastg) + \epsilon.
    \] 

Finally, we note that the condition of the theorem requiring optimizing in $\ball(\xastg, 3\RR)$ can be relaxed to work in $\ball(\xastg, c_1\RR +c_2)$ for $c_1>1$ and $c_2 \in (0, 1/(70\RR\abs{\kmin}) )$, by solving the ball subproblems more accurately and in smaller balls.
\end{proof}

\subsection{Details of the subroutine chosen by \texorpdfstring{\cref{alg:instance_of_riemacon}}{Algorithm \ref{alg:instance_of_riemacon}}  for Line \ref{line:subroutine} of \texorpdfstring{\cref{alg:accelerated_gconvex}}{Algorithm \ref{alg:accelerated_gconvex}}}\label{sec:subroutine}

Given a constant $F$ such that $\norm{\nabla \curvtensor} \leq F$, in their Proposition 6.1, \citet{criscitiello2022negative} argue that given an $L'$-smooth and $\oldmu'$-strongly g-convex function in a ball of radius $r\leq \min\{\frac{\sqrt{\oldmu'}}{4\sqrt{L'}\abs{\kmin}}, \frac{\abs{\kmin}}{4F} \}$\footnote{This bound corresponds to the case of Hadamard manifolds. Their statement applies more generally to manifolds of bounded sectional curvature, in which case $\abs{\kmin}$ would be substituted by $\max\{\abs{\kmin}, \kmax\}.$},  the retraction of the function in the ball to the Euclidean space $\hat{h}(\cdot) \defi h\circ\expon{x_k}(\cdot)$ is strongly convex and smooth with condition number $\frac{3L'}{\oldmu'}$. Here, $\frac{1}{F}$ is interpreted as $+\infty$. They assume that the global minimizer is in this ball, but this fact is only used in order to use $L'$-smoothness to bound the Lipschitz constant of the function by $2rL'$. In our case, the global optimizer is at a distance at most $3\RR$ from any point in any of our balls $\X_k$, as argued in the previous section. However, we can bound the Lipschitz constant by other means. The functions we will apply this subroutine to have the form  $h(y) \defi \f(y) + \frac{1}{\lambda}\dist(x, y)^2$, where $x$ is a point such that $\dist(x, y) \leq 2D$ for all $y\in\X_k$, cf. Line \ref{line:subroutine} in \cref{alg:accelerated_gconvex} and \eqref{eq:x_k_is_not_far_from_X}. Here $\D=2r$ is the diameter of $\X_k$. Using the value of $\lambda$, we have that the smoothness of $g:y\mapsto\frac{1}{\lambda}\dist(x, y)^2$ in the ball $\X_k$ is $\L$ and the global minimizer of this function is at most a distance $2D=4r$. So we can estimate the Lipschitz constant of such an $h$ as 
\[
\max_{y\in \X_k} \norm{\nabla h(y)} \leq \max_{y\in \X_k} \norm{\nabla \f(y)} + \max_{y\in \X_k} \norm{\nabla g(y)} \leq 6\RR \L + 8r\L \leq 14\RR \L,
\] 
where the last inequality uses $r\leq \RR$ which holds by construction of \cref{alg:instance_of_riemacon}. Now, it is enough to satisfy the following inequality in Proposition 6.1 in \citep{criscitiello2022negative} in order to have that the Euclidean pulled-back function has condition number of the same order as $h$, which is $\bigo{\zetad}$ for $\X_k$:
\[
    \frac{7}{9}L'\abs{\kmin}r^2 + \frac{3}{2} \abs{\kmin} r \max_{y\in \X_k} \norm{\nabla h(y)} \leq \frac{\oldmu'}{2} = \frac{\mu + \L/\zeta{2D}}{2}.
\] 
Since $r \leq \RR$, $\zeta{2D} \leq 2\zeta{\D}$, $L' = 2\L$ and $\mu\geq 0$, it is enough to have $23Lr\RR \abs{\kmin}\leq \L/(4\zeta{\D})$. Note that in \cref{alg:instance_of_riemacon}, we ensure $r \leq (92\RR\abs{\kmin}\zeta{2r})^{-1}$ which satisfies the previous inequality and also the initial requirement in Proposition 6.1 in \citep{criscitiello2022negative}. 

After this result, we can use Euclidean machinery on $\hat{h}:\exponinv{x_{k-1}}(\X_k) \to \R$, namely \AGD{} \citep{nesterov2005smooth} with a warm start in order to satisfy the condition in Line \ref{line:subroutine} of \cref{alg:accelerated_gconvex}. The algorithm requires projecting onto the feasible set, and we note that in our case it is a Euclidean ball so the operation is very simple. Indeed, let $\hat{\X}_k \defi \exponinv{x_{k-1}}(\X_k)$ and let $\hat{x} \defi \exponinv{x_{k-1}}(x)$, where $x$ is the center of the prox defining $g$ above. By \citep[Proposition 15]{lin2017catalyst} we have that $\hat{x}' \defi \Pi_{\hat{\X}_k}(\Pi_{\hat{\X}_k}(\hat{x}) - \frac{1}{L'} \nabla \hat{h}(\Pi_{\hat{\X}_k}(\hat{x})))$ is a point that satisfies
\begin{equation}\label{eq:euclidean_warm_start}
    \hat{h}(\hat{x}') - \hat{h}(\hat{y}^\ast_h) \leq \frac{L'}{2} \norm{\hat{y}^\ast_h - \Pi_{\hat{\X}_k}(\hat{x})}^2 \leq \frac{L'}{2} \norm{\hat{y}^\ast_h - \hat{x}}^2,
\end{equation}
where $\hat{y}^\ast_h \defi \argmin_{\hat{y} \in \hat{\X}_k} \hat{h}(y)$ is the minimizer of $\hat{h}$, that is, the exact prox. By \citep{nesterov2005smooth}, the convergence rate of \AGD{} with $\hat{x}'$ as initial point is $\bigo{\sqrt{\frac{L'}{\oldmu'}}\log(\frac{\hat{h}(\hat{x}')-h(\hat{y}^\ast_h)}{\hat{\epsilon}})}$, where the accuracy we will require is ${\hat{\epsilon} \defi \Delta_{k'}\norm{\hat{y}^\ast_h - \hat{x}}_{x_{k-1}}^2/(78\lambda)}$, which is less than the $ \Delta_{k'}\dist(\hat{x}, \hat{y}^\ast_h)^2/(78\lambda)$ accuracy required by \cref{alg:accelerated_gconvex}. Here $k'$ is the internal counter for \cref{alg:accelerated_gconvex} and we used the reasoning above yielding that the condition number of $\hat{h}$ is $\bigo{\frac{L'}{\oldmu'}} =\bigo{\zetad}$. Using \eqref{eq:euclidean_warm_start}, we conclude that it is enough to run \AGD{} for $\bigo{\zetad^{\frac{1}{2}}\log(\frac{78\lambda L'}{2\Delta_{k'}})} = \bigotilde{\zetad^{\frac{1}{2}}}$ gradient oracle queries.

\begin{remark}\label{remark:bounded_operator_norm_of_curvature_tensor}
    We can make \cref{alg:instance_of_riemacon} work under a weaker assumption than \cref{assump:bounded_curvature_tensor} after a minor modification on the algorithm. Because the algorithm in \citep{criscitiello2022negative} can work with bounded $\norm{\nabla \curvtensor} \leq F$ for a constant $F$, we can use it as a subroutine in this more generic case. In such a case, the diameter of the balls $\X_k$ must be $\D \leq \frac{\abs{\kmin}}{2F}$, and it is enough to change the condition in Line \ref{line:first_if_in_instanced_alg} to $2\RR \leq \min\{ (46\RR \abs{\kmin} \zeta{2\RR})^{-1}, \abs{\kmin}/(2F)\}$ and if this condition is not satisfied, then after computing $\D$ in Line \ref{line:computing_D_in_instanced_alg}, we further update $\D \gets \min\{\D, \abs{\kmin}/(2F)\}$. In this way, the condition is satisfied and the geometric penalty is $\bigotilde{\frac{\RR}{\D}} = \bigotilde{\zetar^2 + \frac{\RR F}{\abs{\kmin}}}$ instead of $\bigotilde{\zetar^2}$.
\end{remark}

\section[Metric projections, RGD, and metric-projected RGD]{Metric projections, \texorpdfstring{\RGD{}}{RGD}, and metric-projected \texorpdfstring{\RGD{}}{RGD}}\label{sec:RGD_and_proj_RGD}

\begin{definition}[Metric projection operator]
    Let $\M$ be a Riemannian manifold and let $\X \subset \M$ be a closed g-convex subset of $\M$. A \emph{metric projection operator} onto $\X$ is a map $\newtarget{def:projection_operator}{\proj} : \M \to \X$ satisfying $\dist(x, \proj(x)) \leq \dist(x, y)$ for all $y \in \X$.
\end{definition}

A consequence of the definition in Hadamard manifolds is that the projection is single valued and non-expansive, the latter meaning $\dist(\proj(x), \proj(y)) \leq \dist(x, y)$, cf. \citep[Thm 2.1.12]{bacak2014convex}.

The following proposition establishes that computing a metric projection oracle for Riemannian balls has a simple expression, as long as all points involved are in a uniquely geodesic and g-convex set. This is a useful tool for our constrained \RGD{} in this section. More importantly, we also use it for a Riemannian warm start presented in next section, essential for the generic implementation of \hyperref[alg:accelerated_gconvex]{\color{black}Riemacon}'s subroutines.  

\begin{proposition}\label{prop:metric_projection_in_balls}
    Let $\NN\subset\M$ be a uniquely geodesic and g-convex set in a manifold $\M$ and let $\X \defi \expon{x}(\ball(0, r))\subset\NN$ be a Riemannian closed ball of radius $r$ contained in $\NN$. For $y\in\NN $, we have $\proj(y)=y$ if $y\in\X$ and $\proj(y) = \expon{x}(r\exponinv{x}(y)/\norm{\exponinv{x}(y)})$ otherwise. 
\end{proposition}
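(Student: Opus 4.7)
The plan is to identify $\proj(y)$ as the (unique) minimizer of the g-convex function $f(w) \defi \tfrac{1}{2}\dist(y, w)^2$ over $\X$, and then verify that the point claimed in the statement satisfies the first-order optimality condition. The case $y \in \X$ is immediate since $\dist(y,y) = 0$, so I focus on $y \notin \X$, which means $\dist(x,y) > r$. Let $z \defi \expon{x}\bigl(r \exponinv{x}(y)/\norm{\exponinv{x}(y)}\bigr)$. By construction $z$ lies on the (unique) geodesic segment from $x$ to $y$ at distance exactly $r$ from $x$, so $z \in \X$ and the tangent vectors $\exponinv{z}(x)$ and $\exponinv{z}(y)$ at $z$ are antiparallel.

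Next I invoke \cref{fact:hessian_of_riemannian_squared_distance} to get $\nabla f(w) = -\exponinv{w}(y)$ and g-convexity of $f$ on $\NN$. Since $\X$ is g-convex and $f$ is g-convex (in fact strongly g-convex in the Hadamard setting, hence strictly g-convex, giving uniqueness), it suffices to check the first-order condition
\[
    \innp{\nabla f(z),\, \exponinv{z}(u)}_z \;\geq\; 0 \quad \text{for all } u \in \X,
\]
because then g-convexity yields $f(u) \geq f(z) + \innp{\nabla f(z), \exponinv{z}(u)}_z \geq f(z)$, i.e., $\dist(y,u) \geq \dist(y,z)$. Using that $\exponinv{z}(x)$ is a positive scalar multiple of $-\exponinv{z}(y)$, this reduces to showing
\[
    \innp{\exponinv{z}(x),\, \exponinv{z}(u)}_z \;\geq\; 0 \quad \text{for all } u \in \X.
\]

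To establish this, I apply the Riemannian cosine-law inequality (\cref{lemma:cosine_law_riemannian}) to the triangle with vertices $x, z, u$ in the Hadamard/non-positive-curvature regime:
\[
    \dist(x,u)^2 \;\geq\; \dist(x,z)^2 + \dist(z,u)^2 - 2\innp{\exponinv{z}(x),\, \exponinv{z}(u)}_z.
\]
Since $u \in \X$ gives $\dist(x,u) \leq r = \dist(x,z)$, rearranging yields
\[
    \innp{\exponinv{z}(x),\, \exponinv{z}(u)}_z \;\geq\; \tfrac{1}{2}\dist(z,u)^2 \;\geq\; 0,
\]
which is exactly the needed first-order condition. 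Combined with g-convexity, this proves $z = \proj(y)$ and completes the argument.

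The main (and only real) obstacle is justifying the comparison inequality above in the generality stated; everything else is a routine application of the first-order optimality characterization for g-convex minimization. In the Hadamard setting of the paper the cosine-law inequality is standard and already recorded in the referenced lemma, so no additional curvature hypotheses are needed beyond those already in force.
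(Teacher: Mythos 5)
Your argument is correct in the Hadamard setting, but it takes a genuinely different route from the paper. The paper's proof is purely metric and synthetic: since $\proj(y)\in\X$ and the point $z$ on the geodesic from $x$ to $y$ at distance $r$ from $x$ is a candidate, one gets $\dist(y,\proj(y))\leq \dist(x,y)-r$ and $\dist(x,\proj(y))\leq r$, so the concatenation of the segments $y\to\proj(y)\to x$ has length at most $\dist(x,y)$; by unique geodesicity this concatenation must be \emph{the} geodesic from $y$ to $x$, forcing $\proj(y)=z$. You instead characterize $\proj(y)$ as the minimizer of $w\mapsto\tfrac12\dist(y,w)^2$ and verify first-order optimality at $z$ via the cosine-law inequality (\cref{lemma:cosine_law_riemannian}). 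Both are valid; your route is arguably more systematic (it is the standard variational characterization of projections), while the paper's is shorter and needs no differential structure.

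The one caveat worth flagging is generality. Your key step uses the cosine inequality in the form $\dist(x,u)^2\geq\dist(x,z)^2+\dist(z,u)^2-2\innp{\exponinv{z}(x),\exponinv{z}(u)}_z$, i.e.\ with the constant $\delta{D}=1$, which holds only under nonpositive curvature. The proposition, however, is stated for an arbitrary manifold $\M$ containing a uniquely geodesic g-convex set $\NN$, and the remark immediately following it in the paper explicitly invokes it for manifolds with $\kmax>0$ (with $\NN$ a small enough ball). In that regime $\delta{D}<1$ and your lower bound becomes $\innp{\exponinv{z}(x),\exponinv{z}(u)}_z\geq\tfrac{\delta{D}-1}{2}r^2+\tfrac12\dist(z,u)^2$, which can be negative, so the first-order condition is not established as written; strong g-convexity of the squared distance (hence uniqueness of the minimizer) also fails in general there. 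The paper's triangle-inequality argument is curvature-free and covers the full statement. So treat your proof as complete for Hadamard manifolds (which is where the paper actually uses the result in \cref{lemma:warm_start_riemannian_criterion_2} and the projected \RGD{} analysis), but not as a proof of the proposition in the stated generality.
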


\begin{proof}
    Due to the uniquely geodesic and g-convexity properties, a curve between two points in $\NN$ is a geodesic in $\NN$ if and only if it is globally distance-minimizing. By definition, a projection point $\proj(y)$ satisfies $\dist(y, \proj(y)) \leq \dist(y, z)$ for all $z\in\X$, so if $y$ is outside of the ball then $\proj(y)$ is on the border of the ball and $\dist(x, \proj(y)) = r$ and so the two geodesic segments from $y$ to $\proj(y)$ and from $\proj(y)$ to $x$ form a curve that is globally distance minimizing between $x$ and $y$ and thus this curve is the geodesic segment joining these points. Our expression for $\proj(y)$ is precisely this point if $y \not\in\X$. Trivially, $\proj(y) = y$ if $y\in\X$.
\end{proof}

We note that the unique geodesic and g-convexity conditions required in \cref{prop:metric_projection_in_balls} always hold for $\NN$ being a ball in a Hadamard manifold, such as $\NN = \expon{x}(\ball(0,\dist(x,y)))$. For other manifolds, for which the maximum sectional curvature is $\kmax > 0$, the condition is satisfied for $\NN = \expon{x}(\ball(0, \hat{R}))$ if $\hat{R} < \min\{\operatorname{inj}(x)/2,  \pi/(2\sqrt{\kmax}) \}$, where $\operatorname{inj}(x)$ is the injectivity radius of $x$ \citep[Thm. IX.6.1]{chavel2006riemannian}. 

As we mentioned before \cref{thm:psi_is_lyapunov}, to the best of our knowledge there is no convergence analysis for metric-projected \RGD{} in the smooth case. From an initial point $x_0$ and for a g-convex set $\X$, this algorithm applies the following update sequentially $x_{t+1} \gets \proj(\expon{x_t}(-\eta \nabla f(x_t)))$. We show linear convergence under smoothness and strong g-convexity for a set of certain constant diameter. Note that for $\X$ being a uniquely geodesic ball, we can use the projection operator in \cref{prop:metric_projection_in_balls} in order to implement this algorithm.

\begin{proposition}
    Let $\M$ be a Riemannian manifold of sectional curvature bounded in $[\kmin, \kmax]$ and let $D > 0$ be such that $\zetad \defi \zeta{D} < 2$. For an initial point $x_0$, and for a g-convex set $\X \defi \expon{x_0}(\ball(0, D/2))\subset\M$, let $f:\M \to \R$ be a differentiable function with a global minimizer $x^\ast$ in $\X$ and let $f$ be smooth and $\mu$-strongly g-convex in $\expon{x^\ast}\ball(0, 2D) \subset \M$. Then, metric-projected \RGD{} with learning rate $\eta = (2-\zetad)/L$ and updates $x_{t+1} = \proj(\expon{x_k}(-\eta\nabla f(x_k)))$ for $k \geq 1$ converges linearly, in the following sense:
    \[
    \dist(x_{t+1}, x^\ast)^2 \leq \left(1- \frac{2\mu(2-\zetad)}{\L}\right) \dist(x_t, x^\ast)^2.
    \] 
\end{proposition}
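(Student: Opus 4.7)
The plan is to combine the non-expansiveness of the metric projection with a Riemannian cosine-law bound derived from the Hessian upper bound on $\Phi_{x^\ast}(\cdot) = \tfrac{1}{2}\dist(x^\ast,\cdot)^2$ in \cref{fact:hessian_of_riemannian_squared_distance}. Hadamard manifolds admit non-expansive metric projections (and in our setting \cref{prop:metric_projection_in_balls} makes this explicit for balls), and since $\proj(x^\ast)=x^\ast$, we have $\dist(x_{t+1}, x^\ast) \leq \dist(y_t, x^\ast)$ where $y_t \defi \expon{x_t}(-\eta\nabla f(x_t))$, so it suffices to contract $\dist(y_t, x^\ast)^2$.

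To apply the smoothness and strong-convexity hypotheses, I first check that $y_t$ stays in the region where they hold. Since $x_t, x^\ast \in \X$ gives $\dist(x_t, x^\ast) \leq D$, and $L$-smoothness with $\nabla f(x^\ast) = 0$ gives $\norm{\nabla f(x_t)} \leq L\dist(x_t, x^\ast) \leq LD$, we have $\dist(x_t, y_t) = \eta\norm{\nabla f(x_t)} \leq (2-\zetad)D \leq D$ using $\zetad \geq 1$, which places $y_t$ inside $\ball(x^\ast, 2D)$. The Hessian upper bound on $\Phi_{x^\ast}$ along the geodesic from $x_t$ to $y_t$ then yields
\begin{equation*}
\dist(y_t, x^\ast)^2 \leq \dist(x_t, x^\ast)^2 + 2\eta\innp{\nabla f(x_t), \exponinv{x_t}(x^\ast)}_{x_t} + \zetad\,\eta^2\,\norm{\nabla f(x_t)}^2.
\end{equation*}
Adding two copies of $\mu$-strong g-convexity (at $x_t$ and at $x^\ast$, using $\nabla f(x^\ast) = 0$) gives $\innp{\nabla f(x_t), \exponinv{x_t}(x^\ast)} \leq -\mu\dist(x_t, x^\ast)^2$, and the descent lemma from $L$-smoothness gives $\norm{\nabla f(x_t)}^2 \leq 2L(f(x_t) - f(x^\ast))$. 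Plugging both bounds together with $\eta = (2-\zetad)/L$ and the identity $L\zetad\eta = \zetad(2-\zetad) \leq 1$ (which is just $(\zetad-1)^2 \geq 0$) makes the coefficient of $f(x_t) - f(x^\ast)$ non-positive, and absorbing the residual surplus via $f(x_t) - f(x^\ast) \geq \tfrac{\mu}{2}\dist(x_t, x^\ast)^2$ produces the claimed contraction factor.

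The main obstacle is producing the precise constant $2\mu(2-\zetad)/L$ rather than the weaker $\mu(2-\zetad)/L$ that emerges from the most naive substitution. The trick is to keep both halves of the strong-convexity surplus $\tfrac{\mu}{2}\dist(x_t,x^\ast)^2$ alive: one half enters through the linear-in-$\eta$ term as a direct $-\mu\dist^2$ contribution, while the other half is spent dominating the positive quantity left over from the $\zetad\eta^2\norm{\nabla f(x_t)}^2$ term once $\norm{\nabla f(x_t)}^2 \leq 2L(f(x_t)-f(x^\ast))$ is invoked, so that the two sources combine additively into $2\mu\eta$. A secondary technical point is that the pointwise Hessian bound on $\Phi_{x^\ast}$ along a geodesic reaching into $\ball(x^\ast, 2D)$ is, strictly speaking, controlled by $\zeta_{2D}$ rather than $\zetad$; reconciling this requires using monotonicity of $r\sqrt{\abs{\kmin}}\coth(r\sqrt{\abs{\kmin}})$ together with the hypothesis $\zetad < 2$ to argue the effective constant along the geodesic from $x_t$ to $y_t$ remains bounded by $\zetad$.
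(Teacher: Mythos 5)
Your proposal follows the same route as the paper's proof: reduce to the unprojected point $\tilde{x}_{t+1}$ via the projection, control $\dist(\tilde{x}_{t+1},x^\ast)^2$ with the Riemannian cosine inequality at constant $\zetad$, and feed in strong g-convexity and smoothness with the step size $\eta=(2-\zetad)/\L$; the only real difference is that you apply strong convexity twice plus the descent lemma separately, where the paper packages them into the single inequality $0\le f(\tilde{x}_{t+1})-f(x^\ast)$. The genuine gap is the constant. Writing $d=\dist(x_t,x^\ast)$, $\Delta=f(x_t)-f(x^\ast)$, your three ingredients give
\[
\dist(\tilde{x}_{t+1},x^\ast)^2 \;\le\; d^2+2\eta\Bigl(-\Delta-\tfrac{\mu}{2}d^2\Bigr)+2\L\zetad\eta^2\Delta \;=\;(1-\mu\eta)\,d^2-2\eta(\zetad-1)^2\Delta,
\]
and converting the leftover via $\Delta\ge\frac{\mu}{2}d^2$ yields the factor $1-\mu\eta\bigl(1+(\zetad-1)^2\bigr)$. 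Since $1+(\zetad-1)^2<2$ for $\zetad\in[1,2)$, the two sources do \emph{not} combine into $2\mu\eta$: the surplus $\frac{\mu}{2}d^2$ from each application of strong convexity can be spent once, either on the linear term or on absorbing the quadratic term, but not on both. So your argument proves a contraction of $1-\tfrac{\mu(2-\zetad)}{\L}\bigl(1+(\zetad-1)^2\bigr)$ at best, not $1-\tfrac{2\mu(2-\zetad)}{\L}$. You should be aware that the paper's own derivation of the factor $2$ rests on an algebra slip: multiplying the term $-\frac{\mu}{2}d^2$ of its auxiliary inequality by the factor $\frac{\zetad\eta}{1-\L\eta/2}=2\eta$ gives $-\mu\eta d^2$, not $-2\mu\eta d^2$, so that proof also only establishes $1-\mu(2-\zetad)/\L$; the ``weaker'' constant you mention is what both arguments actually deliver.

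The secondary point about $\zetad$ versus $\zeta{2D}$ is correctly diagnosed but your proposed repair does not work. Integrating the pointwise Hessian bound of \cref{fact:hessian_of_riemannian_squared_distance} along the geodesic from $x_t$ to $\tilde{x}_{t+1}$, whose points may be up to distance $2D$ from $x^\ast$, gives the constant $\zeta{2D}$, and monotonicity of $r\mapsto r\sqrt{\abs{\kmin}}\coth(r\sqrt{\abs{\kmin}})$ implies $\zeta{2D}\ge\zetad$, so monotonicity pushes in the wrong direction and the hypothesis $\zetad<2$ does not close the gap (with $\zeta{2D}$ in place of $\zetad$ the admissible step size $(2-\zeta{2D})/\L$ could even be negative). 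The correct tool is the tighter cosine inequality of \cref{remark:tighter_cosine_inequality}: in \cref{lemma:cosine_law_riemannian} the constant may be taken as $\zeta{\dist(p,x)}$, which here is $\zeta{\dist(x_t,x^\ast)}\le\zetad$ because it depends only on the side joining the vertex $x_t$, where the inner product is evaluated, to $p=x^\ast$, and not on the diameter of the whole triangle. This is exactly what the paper's first step invokes, and it is a comparison-theorem statement, not a consequence of the pointwise Hessian bound alone.
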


\begin{proof}
    Let $\tilde{x}_{t+1} \defi \expon{x_t}(-\eta \nabla f(x_t))$ be the unprojected point. By smoothness and $\dist(x_t, x^\ast) \leq D$ since $x_t, x^\ast \in \X$ we have $\norm{\nabla f(x_t)} \leq DL$. Given that $\eta \leq 1/L$, since $\zetad\geq 1$, we have $\dist(\tilde{x}_{t+1}, x^\ast) \leq 2D$. Thus, we can use the smoothness inequality below, where we also use strong g-convexity:
\begin{align}\label{ineq:aux_bubecks_notes}
 \begin{aligned}
    0 &\leq f(\tilde{x}_{t+1}) - f(x^\ast) = f(\tilde{x}_{t+1}) - f(x_t) + f(x_t)  - f(x^\ast) \\
    & \leq \innp{\nabla f(x_t), \tilde{x}_{t+1} - x_t} + \frac{\L}{2} \norm{\tilde{x}_{t+1} - x_t}_{x_t}^2 + \innp{\nabla f(x_t), x_t - x^\ast} - \frac{\mu}{2}\norm{x_t -x^\ast}_{x_t}^2\\
    & = \innp{\nabla f(x_t), \tilde{x}_{t+1} - x^\ast} + \frac{\L\eta^2}{2} \norm{\nabla f(x_t)}^2  - \frac{\mu}{2}\norm{x_t -x^\ast}_{x_t}^2 \\
    & = \innp{\nabla f(x_t), x_t - x^\ast} + (\frac{\L\eta^2}{2} - \eta) \norm{\nabla f(x_t)}^2  - \frac{\mu}{2}\norm{x_t -x^\ast}_{x_t}^2.
 \end{aligned}
\end{align}
Now, we have the following bound, bounding the distance to the minimizer, from which we will derive convergence rates for projected RGD: 
\begin{align}\label{ineq:aux_distance_bound_in_proj_RGD}
 \begin{aligned}
     \dist(\tilde{x}_{t+1}, x^\ast)^2 &\circled{1}[\leq] \zetad \dist(\tilde{x}_{t+1}, x_t)^2 + \dist(x^\ast, x_t)^2 - \innp{\exponinv{x_t}(\tilde{x}_{t+1}), \exponinv{x_t}(x^\ast)} \\
      &\circled{2}[=] (\zetad -1)\eta^2\norm{\nabla f(x_t)}^2 + \norm{x^\ast - \tilde{x}_{t+1}}_{x_t}^2 \\
     &\circled{3}[=] \norm{x^\ast -x_t}_{x_t}^2 + 2\eta \innp{\nabla f (x_t), x^\ast - x_t}_{x_t} + \zetad\eta^2 \norm{\nabla f(x_t)}^2\\
     &\circled{4}[\leq] \left( 2\eta - \frac{\zetad\eta}{1-\frac{\L\eta}{2}}\right) \innp{\nabla f(x_t), x^\ast -x_t}_{x_t} + \left(1 - \frac{\mu\zetad\eta}{1-\frac{\L\eta}{2}} \right) \norm{x^\ast-x_t}_{x_t}^2.
 \end{aligned}
\end{align}
where in $\circled{1}$ we used the Riemannian cosine law \cref{lemma:cosine_law_riemannian} and \cref{remark:tighter_cosine_inequality}, so the geometric constant is  $\zeta{\dist(x_t, x^\ast)} \leq \zetad$. In $\circled{2}$, we used the analogous Euclidean cosine theorem in $T_{x_t}\M$ along with $\dist(\tilde{x}_{t+1}, x_t) = \norm{\eta \nabla f(x_t)}$ which holds by the definition of $\tilde{x}_{t+1}$. Inequality $\circled{3}$  develops the square $\norm{x^\ast - \tilde{x}_{t+1}}_{x_t}^2 = \norm{x^\ast - x_k - \eta \nabla f(x_t)}_{x_t}^2$ and $\circled{4}$ uses \eqref{ineq:aux_bubecks_notes}, where the inequality has been multiplied by $-\zetad\eta^2(\L\eta^2/2-\eta)^{-1} = \frac{\zetad\eta}{1-\frac{\L\eta}{2}}$ which is non negative, since we chose $0 < \eta \leq 1/\L$.

    Now, since by g-convexity we have $\innp{\nabla f(x_t), x^\ast-x_t}_{x_t} \leq 0$, we want to make the factor alongside this term be $\geq 0$ in order to drop it. That means, it should be $2\eta - \frac{\zetad \eta}{1-\frac{\L\eta}{2}} \geq 0$ which is equivalent to $\eta \leq \frac{2-\zetad}{\L}$. By setting $\eta$ exactly to the value $\frac{2-\zetad}{\L}$ and using $\zetad < 2$ by assumption, we have $\frac{\zetad\eta}{1-\frac{\L\eta}{2}} = 2(2-\zetad)/\L$ and so we can conclude:
\[
    \dist(x_{t+1}, x^\ast)^2 \leq \dist(\tilde{x}_{t+1}, x^\ast)^2 \leq \left(1- \frac{2\mu(2-\zetad)}{\L}\right) \dist(x_t, x^\ast)^2.
\] 

\end{proof}

There are different analyses for regular \RGD{}. We include this curvature independent analysis since we could not find it in the literature. Recall that the iterates are defined as:
\begin{equation}\label{eq:unconstrained_RGD}
x_{t+1} \gets \expon{x_t}(x_t - \frac{1}{L} \nabla f(x_t)).
\end{equation}

\begin{theorem}\label{thm:rgd}
    For a Riemannian manifold $\M$, assume $f:\M\to\R$ is a g-convex and differentiable function with a unique minimizer $x^\ast$. Let $\mathcal{M}$ be a Riemannian manifold such that there are geodesics of minimum length between any point and $x^\ast$. Assume $f$ is $\L$-smooth in a uniquely geodesic set $\NN$ and let $\xInit\in\NN$ be an initial point. Assume $x_t$ is well defined and it is in $\NN$. Then, after $T$ time steps the \RGD{} algorithm given by \eqref{eq:unconstrained_RGD} satisfies: 
    \[
        f(x_T) - f(x^\ast) \leq O\left(\frac{\L R^2}{T}\right),
    \] 
    where $R =\max_{t\in\{1, \dots, T\}}\{\dist(x_t, x^\ast)\} \leq \max_{x:f(x)\leq f(x_0)} \norm{\exponinv{x^\ast}(x)}$. 
\end{theorem}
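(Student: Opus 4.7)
The plan is to mimic the classical Euclidean convergence proof for gradient descent on smooth convex functions, relying only on the Riemannian smoothness descent lemma and a Cauchy--Schwarz bound via g-convexity, both of which are curvature-free. The key insight is that one does not need to track $\dist(x_{t+1},x^\ast)^2$ relative to $\dist(x_t,x^\ast)^2$ (which is where curvature penalties enter via cosine-law inequalities); instead, one controls the suboptimality directly by the gradient norm and the maximum distance $R$ to the minimizer over the run.

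First, applying the $L$-smoothness inequality of \cref{def:g-convex_smooth} with $x\gets x_t$ and $y\gets x_{t+1}=\expon{x_t}(-\tfrac{1}{L}\nabla f(x_t))$, and using that both points lie in $\NN$ by assumption, I get the standard descent bound
\[
    f(x_{t+1}) \leq f(x_t) - \tfrac{1}{2L}\|\nabla f(x_t)\|_{x_t}^2.
\]
Next, g-convexity of $f$ along the (assumed to exist) minimizing geodesic from $x_t$ to $x^\ast$ yields $f(x^\ast)\ge f(x_t)+\innp{\nabla f(x_t),\exponinv{x_t}(x^\ast)}$, and applying Cauchy--Schwarz in $\Tansp{x_t}\M$ together with $\|\exponinv{x_t}(x^\ast)\|=\dist(x_t,x^\ast)\le R$ gives
\[
    f(x_t)-f(x^\ast) \leq \|\nabla f(x_t)\|_{x_t}\cdot \dist(x_t,x^\ast) \leq \|\nabla f(x_t)\|_{x_t}\cdot R.
\]

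Writing $\delta_t \defi f(x_t)-f(x^\ast)$, the two displays above combine into $\delta_{t+1} \leq \delta_t - \tfrac{\delta_t^2}{2LR^2}$. From here the rate follows by the well-known recursion trick: smoothness at $x^\ast$ gives $\delta_0 \leq \tfrac{L}{2}\dist(x_0,x^\ast)^2 \leq 2LR^2$, hence $\delta_t/(2LR^2)\le 1$, so that dividing the recursion by $\delta_t\delta_{t+1}$ and using $1/(1-x)\ge 1+x$ for $x\in[0,1)$ yields
\[
    \frac{1}{\delta_{t+1}} \geq \frac{1}{\delta_t} + \frac{1}{2LR^2}.
\]
Telescoping from $0$ to $T$ gives $1/\delta_T \geq T/(2LR^2)$, i.e.\ $\delta_T \leq 2LR^2/T = O(LR^2/T)$.

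The main subtlety is that the entire argument is curvature-independent only because we never invoke a Riemannian cosine-law-type comparison of squared distances along the trajectory; all we use is scalar descent in function value and a single Cauchy--Schwarz step in one tangent space. The hypotheses already ensure that (i) iterates remain in the uniquely geodesic set $\NN$ where smoothness is valid (so $x_{t+1}$ is well defined and the descent inequality applies), and (ii) a minimizing geodesic from each $x_t$ to $x^\ast$ exists, which is precisely what we need to instantiate g-convexity. The final remark $R\le \max_{x:f(x)\le f(x_0)}\dist(x,x^\ast)$ follows because the descent bound ensures $f(x_t)\le f(x_0)$ for all $t$, so every iterate lies in the initial sublevel set.
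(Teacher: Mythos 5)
Your proposal is correct and follows essentially the same route as the paper's proof: the same descent inequality from $L$-smoothness, the same Cauchy--Schwarz bound $\delta_t \le R\,\norm{\nabla f(x_t)}$ from g-convexity, and the same telescoping of $1/\delta_t$ to get the $2\L R^2/T$ rate (the paper closes the recursion by noting $\Delta_t/\Delta_{t+1}\ge 1$ rather than via the $1/(1-x)\ge 1+x$ bound, but these are interchangeable). The only cosmetic difference is your explicit check that $\delta_t/(2\L R^2)<1$ and your justification of the sublevel-set bound on $R$, both of which the paper leaves implicit.
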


\begin{proof}
    Note that because of convexity and the unique minimizer assumption, $R$ is finite. Let $\Delta_t \defi f(x_t) - f(x^\ast)$. For all $t\geq 0$, we have by g-convexity of $f$ and Cauchy-Schwarz
    \[
        \Delta_t = f(x_{t})-f(x^{\ast}) \leq\innp{\nabla f\left(x_{t}\right), -\exponinv{x_{t}}(x^{\ast})} \leq R \norm{\nabla f(x_{t})}.
    \] 
    Due to smoothness we have $\Delta_{t}-\Delta_{t+1} \geq \frac{1}{2 \L}\left\|\nabla f\left(x_{t}\right)\right\|^{2}$.
    Combining both inequalities one gets
    \[
    \Delta_{t}^{2} \leq 2 \L R^{2}\left(\Delta_{t}-\Delta_{t+1}\right) \Longrightarrow \frac{\Delta_{t}}{\Delta_{t+1}} \leq 2 \L R^{2}\left(\frac{1}{\Delta_{t+1}}-\frac{1}{\Delta_{t}}\right).
    \] 
    Since $\Delta_{t+1} < \Delta_t$, we have $\frac{1}{\Delta_{t+1}} - \frac{1}{\Delta_t} \geq \frac{1}{2\L R^2}$. Therefore,  we obtain by telescoping that at round $T$ we must have ${\frac{1}{\Delta_T} \geq \frac{T}{2 \L R^2}}$, which yields the result.
\end{proof}

\section{Another subroutine, better geometric penalties via another projection oracle}\label{sec:other_subroutine}

This section presents another subroutine for the proximal subproblems in Line \ref{line:subroutine} of \cref{alg:accelerated_gconvex} when $\f$ is $\L$-smooth and for $\lambda = \zeta{2\D}/\L$. This subroutine does not resort to pulling back the function to the tangent space of a point in order to use a Euclidean algorithm. Due to this, we can make the balls in \cref{alg:instance_of_riemacon} be of any size and reduce geometric penalties in our convergence rates, provided that the projection operator in \eqref{eq:other_projection_operator} is implemented. We show how for manifolds satisfying \cref{assump:bounded_curvature_tensor}, this operator results in a convex problem for Riemannian balls of some constant diameter and we show that it yields a $2$-dimensional problem in spaces of constant sectional curvature. 

We proceed to discuss and summarize the implications of the results in this section. For simplicity, we consider $\sqrt{\abs{\kmin}} = \bigo{1}$ in this discussion. We note that one can formally assume this condition without loss of generality by rescaling the manifold and changing $\RR$ accordingly. Indeed, scaling a manifold up makes distances to increase by a factor and the sectional curvature is reduced by the square of that factor, while $\RR\sqrt{\abs{\kmin}}$ remains constant. Also, we note that if $\RR = \bigo{1}$, our geometric penalties are constant, so here we discuss the case in which this condition is not satisfied. In the previous subroutine in \cref{sec:subroutine}, the Lipschitz constant of $\f$ in the balls of \cref{alg:instance_of_riemacon} was estimated to be $\bigo{\L \RR}$ which forced the balls to be of radius $\Theta(1/\RR)$ in order to guarantee the pulled-back function $\hk$ in the proximal subproblem has condition number of the same order as $\hk$. Consequently, our analysis for \cref{alg:instance_of_riemacon} incurs a geometric penalty that is $\bigo{\dist(x_0, x^\ast)/(1/\RR)} = \bigo{\RR^2} =\bigo{\zetar^2}$. The penalty comes from the ratio between $\dist(x_0, x^\ast)$ and the radius of the inexact ball optimization oracle times the constant $\zetad$ introduced by the outer loop of \cref{alg:accelerated_gconvex}, which is $\bigo{1}$ as long as the radius of the balls is $\bigo{1}$. If the subroutine in this section is used to optimize in balls of constant radius, then the geometric penalties of the algorithm are just $\bigo{\RR} = \bigo{\zetar}$. If we optimized in balls of larger radius $\hat{\RR} = \Omega(1)$ or in the extreme case, in only one ball of radius $\RR$ our penalties would still be $\bigo{\RR} = \bigo{\zetar}$ because in such a case, it is $\zetad = \Theta(\hat{\RR})$ so the constants are $\bigo{\RR\zetad/\hat{\RR} } = \bigo{\RR}$. So there is no improvement if the balls are of larger radius, except for one log factor coming from \cref{alg:instance_of_riemacon} that we could remove if we were only using one ball of radius $\RR$. This fact highlights the importance of having a subroutine that works in a ball of constant radius, that is, that obtains linear rates in ball constrained strongly g-convex smooth problems where the radius is $\Theta(1)$. We note that local algorithms, like the one \citep{criscitiello2022negative}, that optimize a function $f$ in a neighborhood whose radius is $O(\sqrt{\mu/\L})$ would not achieve acceleration under the boosted convergence provided by \cref{alg:instance_of_riemacon}. Indeed, because $\sqrt{\mu/\L} \leq 1$, it is $\zetad = \Theta(1)$, but the number of times that we would call the ball optimization oracle is $\bigo{\RR/(\sqrt{\mu/\L})}$, so for an accelerated algorithm that optimizes in the ball with rates $\bigotilde{\sqrt{\L/\mu}}$, up to geometric penalties, we would only be able to guarantee rates of $\bigotilde{\L/\mu}$, up to other geometric penalties. That is, this analysis would only prove unaccelerated convergence.

On the other hand, the subroutine in this section requires the implementation of the projection operator \eqref{eq:other_projection_operator} which is a non-convex problem in general. However, we show that under \cref{assump:bounded_curvature_tensor} or the more general assumption $\norm{\nabla \curvtensor} \leq F$, problem \eqref{eq:other_projection_operator} is convex for a ball of constant radius. Moreover, for the hyperbolic space it is just a $2$-dimensional problem. 

The subroutine in this section assumes access to the operation 
\begin{equation}\label{eq:other_projection_operator}
x_{t+1} = \argmin_{y\in\X}\{\innp{\nabla \f(x_t), y-x_t}_{x_t} + \frac{\L}{2}\dist(x_t, y)^2\} =\expon{x_t}(\argmin_{v \in\exponinv{x_t}(\X)} \norm{-\frac{1}{L}\nabla \f(x_t)- v }_{x_t}^2),
\end{equation}
and the subroutine is defined as the sequential application of this operation, after a warm start, cf. \cref{lemma:warm_start_riemannian_criterion_2}. In the Euclidean case, this subproblem is equivalent to the projection operator applied to the point $\tilde{x}_{t+1} = \expon{x_t}(-\eta \nabla \f(x_t))$. That is, if $\X$ is in the Euclidean space, it is $x_{t+1} = P_{\X}(\tilde{x}_{t+1}) = P_{\X}(x_t-\eta \nabla \f(x_t))$. However in the general Riemannian case, \eqref{eq:other_projection_operator} and the metric-projection operator $P_{\X}(\tilde{x}_{t+1})$ are two different things. 

\begin{proposition}\label{prop:convergence_of_alt_subroutine}
    Let $\M$ be a Riemannian manifold and let $\X \subset\M$ be closed and geodesically convex. Given a $\mu$-strongly g-convex and $\L$-smooth differentiable function $f:\M\to\R$ and $x_0\in\X$, iterating the rule in \eqref{eq:other_projection_operator} yields points that satisfy
    \[
    \f(x_{t+1})-\f(x^\ast) \leq \left(1-\frac{\mu}{2\L}\right)(\f(x_t)-\f(x^\ast)),
    \] 
    where $x^\ast \defi \argmin_{x\in\X}\f(x)$.
        
\end{proposition}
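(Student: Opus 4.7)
The plan is to carry out a direct Riemannian analogue of the classical gradient-mapping analysis for projected gradient descent under smoothness and strong convexity. Define
\[
m_t(y) \defi \innp{\nabla \f(x_t), \exponinv{x_t}(y)}_{x_t} + \frac{\L}{2}\dist(x_t, y)^2,
\]
so that the update rule in \eqref{eq:other_projection_operator} reads $x_{t+1} = \argmin_{y\in\X} m_t(y)$. By $\L$-smoothness at $x_t$ with comparison point $x_{t+1}$, we get the descent inequality $\f(x_{t+1}) \leq \f(x_t) + m_t(x_{t+1})$.

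For $\alpha \in [0,1]$, let $y_\alpha \defi \expon{x_t}(\alpha \exponinv{x_t}(x^\ast))$. Since $\X$ is g-convex and both $x_t, x^\ast \in \X$, $y_\alpha$ lies on the geodesic from $x_t$ to $x^\ast$, hence $y_\alpha \in \X$. Then by optimality of $x_{t+1}$,
\[
m_t(x_{t+1}) \leq m_t(y_\alpha) = \alpha \innp{\nabla \f(x_t), \exponinv{x_t}(x^\ast)}_{x_t} + \frac{\L\alpha^2}{2}\dist(x_t, x^\ast)^2,
\]
using $\exponinv{x_t}(y_\alpha) = \alpha \exponinv{x_t}(x^\ast)$ and $\dist(x_t, y_\alpha) = \alpha \dist(x_t, x^\ast)$. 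Next, $\mu$-strong g-convexity applied at $x_t$ against $x^\ast$ yields $\innp{\nabla \f(x_t), \exponinv{x_t}(x^\ast)}_{x_t} \leq \f(x^\ast) - \f(x_t) - \frac{\mu}{2}\dist(x_t, x^\ast)^2$.

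Chaining the three bounds and subtracting $\f(x^\ast)$ from both sides gives
\[
\f(x_{t+1}) - \f(x^\ast) \leq (1-\alpha)(\f(x_t) - \f(x^\ast)) + \frac{\alpha(\L\alpha - \mu)}{2}\dist(x_t, x^\ast)^2.
\]
Choosing $\alpha = \mu/\L \in (0,1]$ annihilates the last term and gives contraction by $1 - \mu/\L$, which is strictly sharper than the stated $1 - \mu/(2\L)$ and therefore implies it.

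No substantive obstacle arises: the only Riemannian facts used are that along a geodesic the log map and the distance scale linearly in the parameter, and that g-convexity of $\X$ keeps $y_\alpha$ feasible. In particular, no curvature constants $\zetad$ or $\deltad$ are needed, no Hadamard hypothesis is exploited, and the subtle gap between \eqref{eq:other_projection_operator} and the metric projection is irrelevant, because the comparison point $y_\alpha$ is explicitly placed inside $\X$ rather than obtained by projecting from outside.
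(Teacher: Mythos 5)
Your proof is correct and follows essentially the same route as the paper's: both bound $\f(x_{t+1})$ by the model's value at a point $y_\alpha$ on the geodesic from $x_t$ to $x^\ast$ (feasible by g-convexity of $\X$) and then invoke strong g-convexity, with no curvature constants needed. Your variant, which applies strong convexity directly to the linear term so that the choice $\alpha=\mu/\L$ cancels the quadratic exactly, even yields the sharper contraction factor $1-\mu/\L$, which implies the stated $1-\mu/(2\L)$.
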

\begin{proof}
We have 
\begin{align*}
 \begin{aligned}
     \f(x_{t+1}) &\circled{1}[\leq] \min_{x\in\X}\left\{\f(x_t) + \innp{\nabla \f(x_t), x - x_t}_{x_t} + \frac{\L}{2}\dist(x, x_t)^2 \right\} \\
     &\circled{2}[\leq] \min_{x\in\X} \left\{\f(x) + \frac{\L}{2}\dist(x, x_t)^2 \right\} \\
     & \circled{3}[\leq] \min_{\alpha \in [0,1]} \left\{\alpha \f(x^\ast) + (1-\alpha) \f(x_t) + \frac{\L\alpha^2}{2}\dist(x^\ast, x_t)^2 \right\} \\
     & \circled{4}[\leq] \min_{\alpha \in [0,1]} \left\{\f(x_t) - \alpha\left(1-\alpha\frac{\L}{\mu}\right)\left(\f(x_t)-\f(x^\ast)\right) \right\} \\
     &\circled{5}[=] \f(x_t) - \frac{\mu}{2\L}(\f(x_t)-\f(x^\ast)).
 \end{aligned}
\end{align*}
    Above, $\circled{1}$ holds by smoothness and \eqref{eq:other_projection_operator}. The g-convexity of $f$ implies $\circled{2}$. 
Inequality $\circled{3}$ results from restricting the minimum to the geodesic segment between $x^\ast$ and $x_t$ so that ${x = \expon{x_t}(\alpha x^\ast + (1-\alpha)x_t)}$. We also use g-convexity of $f$. In $\circled{4}$, we used strong convexity of $f$ to bound $\frac{\mu}{2} \dist(x^\ast, x_t)^2 \leq \f(x_t)-\f(x^\ast)$. Finally, in $\circled{5}$ we substituted $\alpha$ by the value that minimizes the expression, which is $\mu/2\L$. The result follows by subtracting $\f(x^\ast)$ to the inequality above. 
\end{proof}

As we require for the subroutine in Line \ref{line:subroutine} of \cref{alg:accelerated_gconvex}, we have linear convergence, and this subroutine can be used with our algorithm provided that we can find a suitable warm start. Below, we show how this is done and we note that any other Riemannian constrained algorithm with natural linearly convergent rates can be used to solve this step, given that it is initialized with our warm start. The warm start allows to know when to stop the subroutine at the same time that guarantees fast convergence. One should think about this lemma as being applied to $\hk(\cdot) \defi \f(\cdot) + \frac{1}{2\lambda} \dist(\cdot, \xk)^2$. Also, note that in that case we can compute the gradient of $h$ at any point $y\in\X$ as $\nabla h(y) = \nabla \f(y) - \frac{1}{\lambda} \exponinv{y}(\xk)$.

\begin{lemma}[Warm start]\label{lemma:warm_start_riemannian_criterion_2}
    Let $\M$ be a Riemannian manifold, let $x \in \M$, $\X\subset\M$ be a uniquely g-convex set of diameter $\D$ and $h:\M\to\R$ a g-convex and $L'$-smooth function in $\X$. Assume access to a projection operator $\proj$ on $\X$. Let $x' \defi \proj(x)$, $x^{+}\defi \expon{x'}(-\frac{1}{L'}\nabla h(x'))$, $p_0\defi \proj(x^+)$, and $D'\defi \dist(x^{+}, x') =\norm{\nabla h (x')}/L'$. We have that, for all $p\in \X$:
    \[
        h(p_0)-h(p) \leq \frac{\zeta{D'} L'}{2}\dist(x', p)^2 \leq \frac{\zeta{D'} L'}{2}\dist(x, p)^2. 
    \] 
\end{lemma}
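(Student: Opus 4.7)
The plan is to first settle the second inequality by non-expansiveness of the metric projection in a Hadamard manifold: since $p\in\X$ implies $\proj(p)=p$, we immediately have $\dist(x',p)=\dist(\proj(x),\proj(p))\le\dist(x,p)$.

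For the first inequality, I would follow a Riemannian analogue of the Euclidean projected-gradient descent lemma. Introduce the quadratic model of $h$ at $x'$,
\begin{equation*}
\psi(v) \defi h(x') + \innp{\nabla h(x'),\exponinv{x'}(v)} + \tfrac{L'}{2}\dist(x',v)^2,
\end{equation*}
so that $L'$-smoothness of $h$ gives $h(p_0)\le\psi(p_0)$. Since $\exponinv{x'}(x^+)=-\nabla h(x')/L'$ and $\dist(x',x^+)=D'$, the Hadamard (CAT(0)) cosine inequality on the geodesic triangle $(x',x^+,v)$ rearranges into $\psi(v)-h(x')\le\tfrac{L'}{2}(\dist(x^+,v)^2-D'^2)$ for every $v$. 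Specializing to $v=p_0$ and using the metric-projection property $\dist(x^+,p_0)\le\dist(x^+,p)$ gives
\begin{equation*}
h(p_0)\le \psi(p_0) \le h(x')+\tfrac{L'}{2}\bigl(\dist(x^+,p)^2-D'^2\bigr).
\end{equation*}

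It remains to convert $\dist(x^+,p)^2-D'^2$ back into a first-order expression in $\exponinv{x'}(p)$ plus a controlled quadratic error. For this I would invoke the upper Hessian bound on $\Phi_{x^+}(\cdot)\defi\tfrac12\dist(x^+,\cdot)^2$ from \cref{fact:hessian_of_riemannian_squared_distance}, together with $\nabla\Phi_{x^+}(x')=-\exponinv{x'}(x^+)=\nabla h(x')/L'$, to obtain
\begin{equation*}
\tfrac{L'}{2}\bigl(\dist(x^+,p)^2-D'^2\bigr)\le \innp{\nabla h(x'),\exponinv{x'}(p)}+\tfrac{\zeta{D'}L'}{2}\dist(x',p)^2.
\end{equation*}
Combining this with g-convexity $h(x')+\innp{\nabla h(x'),\exponinv{x'}(p)}\le h(p)$ closes the argument and yields $h(p_0)\le h(p)+\tfrac{\zeta{D'}L'}{2}\dist(x',p)^2$.

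The main obstacle is selecting the correct pair of Riemannian comparison inequalities so that the cross terms cancel. In the Euclidean case, the cosine law is an equality and both bounds become exact, reducing the proof to the standard projected-gradient identity. In the Hadamard case one must invoke the CAT(0) cosine inequality (a lower bound on $\dist(x^+,v)^2$) on the triangle involving $p_0$, and the $\zeta{D'}$-smoothness of $\Phi_{x^+}$ (an upper bound) on the triangle involving $p$, pulling in opposite directions. Aligning these so that the $D'^2$ and $\dist(x',p_0)^2$ contributions cancel and only $\zeta{D'}\dist(x',p)^2$ survives is the crux; the geometric penalty $\zeta{D'}$ enters precisely as the Hessian-scale distortion of the squared distance from $x^+$ over the gradient leg of length $D'$.
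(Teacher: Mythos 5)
Your proof is correct and takes essentially the same route as the paper's: the smoothness quadratic at $x'$, the CAT(0) lower cosine inequality to replace the tangent-space distance $\norm{x^{+}-p_0}_{x'}$ by $\dist(x^{+},p_0)$, the projection property of $p_0$, the upper cosine inequality on the triangle $(x',x^{+},p)$, and finally g-convexity. The one point to be careful about is that the constant $\zeta{D'}$ (rather than one indexed by the diameter of the whole triangle) comes from the tighter cosine law of \cref{remark:tighter_cosine_inequality}, in which the geometric constant is indexed by the side $\dist(x',x^{+})=D'$ opposite the $\dist(x',p)^2$ term; this is exactly what the paper invokes, so your appeal to the Hessian bound on $\Phi_{x^{+}}$ should be read as that inequality rather than as a naive integration of \cref{fact:hessian_of_riemannian_squared_distance} along the segment from $x'$ to $p$, which would only yield a larger constant.
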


\begin{proof}
    
    With the notation of the lemma, we have, by smoothness of $h$, that the following quadratic $Q:\Tansp{x'}\M\to\R$, $v \mapsto h(x') +\frac{L'}{2}\norm{x^{+}-v}_{x'}^2 -\frac{L'}{2}\norm{x^{+}-x'}_{x'}^2$ induces an upper bound on $h$ in $\X$, via $\expon{x'}(\cdot)$. Thus, we have
\begin{align*}
 \begin{aligned}
     -\frac{\zeta{D'} L'}{2}\dist(x, p)^2 + h(p_0) &\circled{1}[\leq] -\frac{\zeta{D'} L'}{2}\dist(x',p)^2 + h(p_0)  \\
     &\circled{2}[\leq] -\frac{\zeta{D'} L'}{2}\dist(x',p)^2 + Q(\exponinv{x'}(p_0)) \\
     &\circled{3}[\leq] -\frac{\zeta{D'} L'}{2}\dist(x',p)^2 + \pa{h(x') + \frac{L'}{2}\dist(x^+, p_0)^2 - \frac{L'}{2}\dist(x^{+},x')^2 }\\
     &\circled{4}[\leq] -\frac{\zeta{D'} L'}{2}\dist(x',p)^2 + \pa{h(x') + \frac{L'}{2}\dist(x^+, p)^2 - \frac{L'}{2}\dist(x^{+},x')^2 }\\
     &\circled{5}[\leq] -L'\innp{\exponinv{x'}(p), \exponinv{x'}(x^{+})} + h(x')\\
     &\circled{6}[=] -L'\innp{\exponinv{x'}(p), -\frac{1}{L'}\nabla h(x')} + h(x')\\
     &\circled{7}[\leq] h(p).
 \end{aligned}
\end{align*}
    We used the projection property of $x' = \proj(x)$ in $\circled{1}$. We used smoothness in $\circled{2}$. In $\circled{3}$, we used the first part of \cref{corol:moving_quadratics:exact_approachment_recession} with $\delta{D'} = 1$, $r=1$, $x\gets x'$, $y\gets x^{+}$, $p\gets p_0$ to bound the estimated distance $\norm{x^+ - p_0}_{x'}$ by the actual distance $\dist(x^+, p_0)$. We used the projection property of $p_0=\proj(x^{+})$ in $\circled{4}$. In $\circled{5}$, we used the version of \cref{lemma:cosine_law_riemannian} in \cref{remark:tighter_cosine_inequality}. We used the definition of $x^{+}$ in $\circled{6}$, and we conclude in $\circled{7}$ by using g-convexity of $h$.
\end{proof}

\begin{remark}
In the proof above, we actually only used that $h$ is star convex and star smooth at $x'$. That is, for all $p \in \X$, we have:
\begin{equation}\label{eq:start_convexity_and_smoothness}
    r(p) \defi h(x') + \innp{\nabla h(x'), p-x'}_{x'}  \leq  h(p) \leq h(x') + \innp{\nabla h(x'), p-x'}_{x'} + \frac{L'}{2}\dist(x, p)^2 \defi s(p).
\end{equation}
    Note that $s(p)$ is essentially the quadratic $Q$, but it is defined in the manifold. A consequence of only requiring \eqref{eq:start_convexity_and_smoothness} is that the applicability of the warm start is even wider. As an example, the warm start lemma can also be applied to the lower bound yielded by g-convexity when computing the gradient of a g-convex function. Similarly to the upper bound yielded by smoothness. Or in other words, one can apply the lemma to the functions $r(p)$ and $s(p)$ defined in \eqref{eq:start_convexity_and_smoothness}, even though these functions are not g-convex or $L'$-smooth.
\end{remark}

\begin{remark}[General linearly convergent subroutine]\label{remark:RGD_can_be_used_as_subroutine}
    Assume we have an unaccelerated algorithm $\mathcal{A}$ that takes a function $h:\M \to \R$ with minimizer at $y^\ast$ when restricted to $\X\subset\M$ and that is $\oldmu'$-strongly g-convex and $L'$-smooth in $\X$, where $\M$ is a Hadamard manifold of bounded sectional curvature and $\X$ is a geodesically-convex compact set with diameter $\D$. Suppose $\mathcal{A}$ returns a point $p_t$ satisfying $h(p_t)-h(y^\ast) \leq \epsilonp$ after querying a gradient oracle for $h$ and a metric-projection oracle $\proj$ for $\X$ for at most $t=\bigo{(\zetad+\frac{L'}{\oldmu'})\log(\frac{(\hk(p_0)-\hk(y^\ast))+L'\dist(p_0, y^\ast)^2}{\epsilonp})}$ times.
    If we apply this algorithm to the objective in the problem of Line \ref{line:subroutine} of \cref{alg:accelerated_gconvex}: $h\gets \hk(y) \defi \f(y) + \frac{1}{2\lambda} \dist(\xk, y)^2$, we have $y^\ast \gets \ykast$, $L' \gets 2\L $ and $\oldmu' \gets \L /\zeta{2\D}$, so the condition number is $L'/\oldmu' = O(\zeta{2\D})=O(\zetad)$. This is computed taking into account that $\f$ is $\L $-smooth and $0$-strongly g-convex and using the $\zeta{2\D}/\lambda$-smoothness and $1/\lambda$-strong g-convexity of the second summand, which is given by \cref{fact:hessian_of_riemannian_squared_distance} and \eqref{eq:x_k_is_not_far_from_X}. If we initialize the method with $p_0 \defi \proj(\expon{\xkp}(-\frac{1}{L'}\nabla \hk(\xkp)))$, where $\newtarget{def:iterate_xp}{\xkp} \defi \proj(\xk)$, then using ($\L /\zeta{2\D}$)-strong g-convexity of $\hk$ to bound $L'\dist(p_0, \ykast)^2 \leq 4\zeta{2\D}(\hk(p_0)-h(\ykast))$, using \cref{lemma:warm_start_riemannian_criterion_2} with $x \gets \xk$, $p\gets \ykast$, and using the guarantees on $\mathcal{A}$, we have that we find a point $\yk$ satisfying $\hk(\yk)-\hk(\ykast) \leq \frac{\Deltak \dist(\xk, \ykast)^2}{78\lambda}$ in $\bigotilde{\zetad}$ queries to the gradient and projection oracles. 
    
   Indeed, let $D'' \defi (L_{\f, \X}+2\L\D/\zeta{2\D})/L'$, where $L_{\f, \X}$ is the Lipschitz constant of $\f$ in $\X$. In \cref{lemma:warm_start_riemannian_criterion_2}, we have 
    \begin{align*}
   \begin{aligned}
       D' &\gets \norm{\nabla \hk(x')}/L' \leq (\norm{\nabla \f(x')} + L\norm{\exponinv{\xk}(x')}/\zeta{2\D})/L' \leq (L_{\f, \X}+2\L\D/\zeta{2\D})/L' = D'',
   \end{aligned}
\end{align*}
The number of queries to the gradient oracle is given by
\begin{align*}
   \begin{aligned}
         &\bigol{\zeta{2\D} \log \frac{ (\hk(p_0)-\hk(\ykast))  + L'\dist(p_0, \ykast)^2}{\Deltak \dist(\xk, \ykast)^2/(78 \zeta{2\D}/\L)}} =\bigol{\zetad \log \frac{ 78\zetad \cdot (1+4\zeta{2\D})(\zeta{D'} L'/2)\dist(\xk, \ykast)^2}{\L\Deltak \dist(\xk, \ykast)^2}} \\
         &\quad \quad \quad = \bigol{\zetad \log \left(\frac{\zetad\cdot\zeta{D'}}{\Deltak}}\right) =\bigol{\zetad \log \left(\frac{\zetad\cdot\zeta{D''}}{\Deltak}}\right).
   \end{aligned}
\end{align*}
    Note that we know that on the one hand we can stop the algorithm after $\bigo{\zetad \log (\frac{\zetad\cdot\zeta{D'}}{\Deltak})}$ iterations which is a value we can compute, including constants, since we can compute $D'$. On the other hand  the worst-case complexity can be expressed as $\bigo{\zetad \log (\frac{\zetad\cdot\zeta{D''}}{\Deltak})}$ but we do not need to have access to $L_{\f,\X}/L'$. Note that if there is a point $\xast\in\X$ such that $\nabla \f(\xast)=0$, then we have by smoothness that $L_{\f,\X} = O(\L\D)$ and therefore $D'' = O(\D)$. Also, for \cref{alg:instance_of_riemacon}, it is always $L_{\f,\X} = \bigo{\L(\RR+\D)}$ and so $D'' = \bigo{\RR + \D}$. Also, recall that $\zeta{D''} \leq D'' + 1$.
\end{remark}

\begin{algorithm}
    \caption{Alternative subroutine with projection oracle \eqref{eq:other_projection_operator}}
    \label{alg:other_subroutine}

\begin{algorithmic}[1] 
    \REQUIRE $\hk(y)  = f(y) + \frac{1}{2\lambda} \dist(\xk, y)^2$ from the problem in Line \ref{line:subroutine} of \cref{alg:accelerated_gconvex}.
    \vspace{0.1cm}
    \State $L' \gets 2L $ \Comment{Smoothness of $\hk$}
    \State $x_k' \gets \proj(\xk)$
    \State $p_0 \gets \proj(\expon{x'}(-\frac{1}{L'}\nabla h(x')))$
    \State $D' \gets \norm{\nabla h(x')}/L'$
    \State $T \gets T$ in \cref{remark:RGD_can_be_used_as_subroutine} $= \bigo{\zetad \log \frac{\zetad\cdot\zeta{D'}}{\Deltak}}$
    \hrule
    \vspace{0.1cm}
    \FOR {$k = 1 \textbf{ to } \TT$}
    \State $p_{k} = \argmin_{p\in\X}\{\innp{\nabla h(p_{k-1}), p-p_{k-1}}_{p_{k-1}} + \frac{L'}{2}\dist(p_{k-1}, p)^2\}$ \Comment{Operator \eqref{eq:other_projection_operator}} 
    \ENDFOR
    \State \textbf{return} $p_{\TT}$.
\end{algorithmic}
\end{algorithm}

\begin{remark}[Implementing the projection oracle in \eqref{eq:other_projection_operator}]
    As we discussed at the beginning of this section, it is enough for our purposes to run the subroutine given by the sequential iteration of \eqref{eq:other_projection_operator} in balls $\X_k$ of constant diameter $\D$. Similarly, we assume without loss of generality that $\kmin = -1$. By \cref{fact:hessian_of_riemannian_squared_distance}, the function $\Phi_{\xk}(x) = \frac{1}{2}\dist(\xk, x)^2$ has condition number bounded by $\zeta{4D}$ in $\X_k'\defi\expon{\xk}(\ball(0, 2D))$. In fact, in this case one can see that the condition number is bounded by $\zeta{2D}$, because that is the maximum distance of any point in $\X_k'$ to the minimizer. For any $x\in\X_k$, by \citep{criscitiello2022negative}, the pullback function $v \mapsto \Phi_{\xk}(\expon{x}(v))$ restricted to $v\in\ball(0, \D)\subseteq \exponinv{x}(\X_k')$ is strongly convex with condition number $\leq 3\zeta{2D}$, if $D \leq 1/(4\sqrt{3\zeta{2D}})$ under \cref{assump:bounded_curvature_tensor}, which is satisfied for all $D \leq 1/5$. Thus, the level set $\{v \in T_{x}\M\  | \ \dist(\xk, \expon{x}(v)) \leq \D/2 \} = \exponinv{x}(\X_k)$ is a (strongly) convex set and the operation \eqref{eq:other_projection_operator} consists of a Euclidean projection on this strongly convex Euclidean set. The general case $\norm{\nabla \curvtensor} \leq F$, for $F>0$ is analogous.

    Also, we note that in spaces of constant curvature this problem is a $2$-dimensional problem as we argue in the following. Due to the symmetry of these spaces, the subspace $\expon{x_t}(P)$ is another Riemannian manifold of constant sectional curvature,  where $P = \operatorname{span}\{\exponinv{x_t}(x_k), \exponinv{x_t}(-\frac{1}{L}\nabla f(x_t))\}$. The symmetry of Riemannian balls, makes the intersection of this $2$-dimensional space and the Riemannian ball be another Riemannian ball and the projection lives in this subspace. Indeed, the projection is unique because it is the projection onto a strongly convex set. If the projection in $T_{x_t}\M$ lived outside of this plane $P$, then its symmetric with respect to the plane would also be a projection point, which is a contradiction. 
    
\end{remark}

\section{Geometric lemmas}

In this section, we state and prove \cref{lemma:moving_hyperplanes:exact_approachment_recession}, which is used in the proof of \cref{thm:g_convex_acceleration} to show that the lower bound given by $\f(\ykast) + \innp{\vtky, x-\ykast}$ that is affine if pulled back to $\Tansp{\ykast}$ can be bounded by another function, that is affine if pulled back to $\Tansp{\xk}$. We also include and prove, with some generalizations, some known Riemannian inequalities that are used in Riemannian optimization methods and that we also use. The second part of the following lemma appeared in \citep{kim2022accelerated}. Similarly with the second part of the corollary that follows.

In this section, unless otherwise specified, $\M$ is an $n$-dimensional Riemannian manifold of bounded sectional curvature.

\begin{lemma}\label{lemma:moving_quadratics:inexact_approachment_recession}
    Let $x, y, p \in \M$ be the vertices of a uniquely geodesic triangle $\Tri$ of diameter $D$, and let $z^x \in \Tansp{x}\M$, $z^y \defi \Gamma{x}{y}(z^x) + \exponinv{y}(x)$, such that $y = \expon{x}(rz^x)$ for some $r\in[0,1)$. If we take vectors $a^y\in \Tansp{y}\M$, $a^x \defi \Gamma{y}{x}(a^y) \in \Tansp{x}\M$, then we have the following, for all $\xiOnly \geq \zeta{D}$:
    
\begin{align*}
 \begin{aligned}
     \norm{&z^y+a^y-\exponinv{y}(p)}_y^2 + (\delta{D}-1)\norm{z^y+a^y}_y^2 \\
     &\geq \norm{z^x+a^x-\exponinv{x}(p)}_x^2 + (\delta{D}-1)\norm{z^x+a^x}_x^2 - \frac{\xiOnly-\delta{D}}{2}\left(\frac{r}{1-r}\right)\norm{a^x}_x^2,
 \end{aligned}
\end{align*}
and
\begin{align*}
 \begin{aligned}
     \norm{&z^y+a^y-\exponinv{y}(p)}_y^2 + (\xiOnly-1)\norm{z^y+a^y}_y^2 \\
     &\leq \norm{z^x+a^x-\exponinv{x}(p)}_x^2 + (\xiOnly-1)\norm{z^x+a^x}_x^2 + \frac{\xiOnly-\delta{D}}{2}\left(\frac{r}{1-r}\right)\norm{a^x}_x^2.
 \end{aligned}
\end{align*}
\end{lemma}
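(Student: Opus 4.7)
I'll prove both inequalities in parallel, since they share essentially the same structure. The first move is to transfer the $\Tansp{y}\M$ expressions on the left to $\Tansp{x}\M$ via parallel transport. Since $y = \expon{x}(r z^x)$, it follows that $\exponinv{y}(x) = -r\,\Gamma{x}{y}(z^x)$, so $z^y = (1-r)\,\Gamma{x}{y}(z^x)$ and $z^y + a^y = \Gamma{x}{y}(\beta)$ with $\beta \defi (1-r)z^x + a^x = \alpha - r z^x$ and $\alpha \defi z^x + a^x$. Writing $u^x \defi \exponinv{x}(p)$ and $\tilde u \defi \Gamma{y}{x}(\exponinv{y}(p))$, isometry of parallel transport gives $\|z^y+a^y\|_y = \|\beta\|_x$, $\innp{z^y+a^y,\exponinv{y}(p)}_y = \innp{\beta,\tilde u}_x$, and $\|\exponinv{y}(p)\|_y^2 = \dist(y,p)^2$. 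Applying $\delta{\D}$-strong g-convexity of $\Phi_p \defi \tfrac12\dist(\cdot,p)^2$ at $x$ evaluated at $y$ (\cref{fact:hessian_of_riemannian_squared_distance}) bounds $\dist(y,p)^2 - \dist(x,p)^2 \geq \delta{\D}\, r^2\|z^x\|^2 - 2r\innp{u^x, z^x}$. Substituting this into the expanded difference and simplifying via $\alpha = \beta + r z^x$, routine algebra collapses the first inequality to showing
\[
    2\innp{\beta,\; u^x - \tilde u - \delta{\D}\, r\, z^x} \;\geq\; -\tfrac{\xi-\delta{\D}}{2}\tfrac{r}{1-r}\|a^x\|^2.
\]

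The main obstacle will be proving this bound. I plan to use the Riemannian Taylor identity $u^x - \tilde u = \int_0^r H_s\, z^x\,\mathrm{d}s$, where $H_s$ is the Hessian of $\Phi_p$ at $\gamma(s) \defi \expon{x}(sz^x)$ parallel-transported back to $\Tansp{x}\M$ along $\gamma$; this follows from $\nabla \Phi_p = -\exponinv{\cdot}(p)$ and the fundamental theorem of calculus applied to the parallel-transported gradient field. By \cref{fact:hessian_of_riemannian_squared_distance}, $\delta{\D}\,\Id \preceq H_s \preceq \zeta{\D}\,\Id$, so $G_s \defi H_s - \delta{\D}\,\Id$ is a positive semidefinite bilinear form with $G_s \preceq (\zeta{\D}-\delta{\D})\,\Id$. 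Splitting $\beta = (1-r)z^x + a^x$,
\[
    \innp{\beta,\; u^x - \tilde u - \delta{\D} r z^x} \;=\; (1-r)\int_0^r G_s[z^x, z^x]\,\mathrm{d}s + \int_0^r G_s[z^x, a^x]\,\mathrm{d}s,
\]
whose first summand is non-negative. For the cross term I apply Cauchy--Schwarz for the PSD form $G_s$ pointwise and then for the integral to obtain $\bigl|\int_0^r G_s[z^x, a^x]\,\mathrm{d}s\bigr| \leq \sqrt{r(\zeta{\D}-\delta{\D})}\,\|a^x\|\sqrt{Q}$ with $Q \defi \int_0^r G_s[z^x, z^x]\,\mathrm{d}s$. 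The argument closes by AM--GM with $A = 2(1-r)Q$ and $B = \tfrac{(\zeta{\D}-\delta{\D})r}{2(1-r)}\|a^x\|^2$, since $2\sqrt{AB} = 2\sqrt{rQ(\zeta{\D}-\delta{\D})}\|a^x\| \leq A + B$. The hypothesis $\xi \geq \zeta{\D}$ then upgrades $\zeta{\D}-\delta{\D}$ to $\xi-\delta{\D}$ in the final error term.

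The second inequality admits a completely parallel argument: one replaces $\delta{\D}$ by $\xi$ in the expansion, uses $\zeta{\D}$-smoothness of $\Phi_p$ to upper bound $\dist(y,p)^2 - \dist(x,p)^2 \leq \zeta{\D}\, r^2\|z^x\|^2 - 2r\innp{u^x, z^x}$, and reduces the statement to an upper bound on $2\innp{\beta,\, u^x - \tilde u - \xi\, r\, z^x}$. This last quantity equals $-2\int_0^r K_s[z^x, \beta]\,\mathrm{d}s$, where $K_s \defi \xi\,\Id - H_s$ is PSD (since $\xi \geq \zeta{\D} \succeq H_s$) and satisfies $K_s \preceq (\xi-\delta{\D})\,\Id$. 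The same Cauchy--Schwarz/AM--GM scheme, this time with $B = \tfrac{(\xi-\delta{\D})r}{2(1-r)}\|a^x\|^2$, produces the claimed matching upper bound.
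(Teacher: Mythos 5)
Your proof is correct, and it reaches the stated bounds by a route that is organized quite differently from the paper's, even though both ultimately rest on the same ingredient: the eigenvalue bounds $\delta{D}\,\Id \preceq \Hess\Phi_p \preceq \zeta{D}\,\Id$ from \cref{fact:hessian_of_riemannian_squared_distance} applied along the geodesic from $x$ to $y$. The paper introduces the potential $w(t)=\norm{\exponinv{\gamma(t)}(p)-V(t)}^2$ for the parallel field $V(t)$ with $\norm{V(t)}=\norm{a^x+(1-t)z^x}$, differentiates it, applies a \emph{pointwise} Young's inequality with the $t$-dependent weight $\tfrac{1}{1-t}$, and integrates the resulting differential inequality; the error term appears as $\tfrac{\xiOnly-\delta{D}}{2}(-\log(1-r))\norm{a^x}^2$, which is then bounded by $\tfrac{\xiOnly-\delta{D}}{2}\tfrac{r}{1-r}\norm{a^x}^2$. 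You instead first reduce everything algebraically (transporting to $\Tansp{x}\M$ and absorbing $\dist(y,p)^2-\dist(x,p)^2$ via strong g-convexity/smoothness of $\Phi_p$) to a single inner product $2\innp{\beta,\,u^x-\tilde u-\delta{D} r z^x}$, represent $u^x-\tilde u$ by the fundamental theorem of calculus as $\int_0^r H_s z^x\,\mathrm{d}s$, and control the lone cross term with Cauchy--Schwarz for the PSD forms $G_s$ (resp.\ $K_s$) followed by one global AM--GM. What your version buys: the weight $\tfrac{r}{1-r}$ comes out in one shot rather than via the logarithm bound, and the first inequality actually holds with the slightly sharper constant $\zeta{D}-\delta{D}$ in place of $\xiOnly-\delta{D}$ (you only need $\xiOnly\geq\zeta{D}$ to weaken it to the stated form). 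What the paper's version buys: the differential-inequality formulation handles both directions symmetrically with a single sign parameter $\beta\in\{-1,1\}$ and extends naturally to the limiting case $r\to 1$ used in \cref{corol:moving_quadratics:exact_approachment_recession}, whereas in your scheme that limit must be taken separately (your error term still vanishes as $r\to1$ only when $a^x=0$, exactly as in the paper). Both proofs are sound; no gap.
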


\begin{proof}
    Let $\gamma$ be the unique geodesic in $\Tri$ such that $\gamma(0) = x$ and $\gamma(r) = y$. We have $\gamma'(0) = z^x$. Along $\gamma$, we define the vector field $V(t) = \Gamma{0}{t}(\gamma)(z^x - t\gamma'(0))$. Then, it is $V'(t) = -\gamma'(t)$, and $\norm{V(t)} = \norm{a + (1-t)z^x}$. We will make use of the potential $w:[0,r] \to \R$ defined as $w(t) = \norm{\exponinv{\gamma(t)}(x)-V(t)}^2$.
    We can compute
\begin{align}\label{ineq:aux1_quadratics}
 \begin{aligned}
     \frac{d}{dt}w(t) &= 2\innp{D_t(\exponinv{\gamma(t)}(x)-V(t)), \exponinv{\gamma(t)}(x)-V(t)} \\
        &= 2\innp{D_t\exponinv{\gamma(t)}(x), \exponinv{\gamma(t)}(x)} - 2\innp{D_t\exponinv{\gamma(t)}(x), V(t)} \\
        &\quad\quad - 2\innp{D_t V(t), \exponinv{\gamma(t)}(x)} + 2\innp{D_t V(t), V(t)}\\
        &= - 2\innp{D_t(\exponinv{\gamma(t)}(x), V(t)} + 2\innp{D_t V(t), V(t)}.\\
 \end{aligned}
\end{align}
    Now, we bound the first summand. We use that for the function $\Phi_p(x) = \frac{1}{2}\dist(x, p)^2$ it holds, for every $\xiOnly \geq \zeta{D}$:
    \[
        -\frac{\xiOnly-\delta{D}}{2}\norm{v}^2 \leq \innp{\Hess \Phi_p(\gamma(t))[v] -\frac{\xiOnly+\delta{D}}{2}v, v} \leq \frac{\xiOnly-\delta{D}}{2}\norm{v}^2,
    \] 
    due to \cref{fact:hessian_of_riemannian_squared_distance}. So for $\beta \in \{-1, 1\}$ we obtain the following bound:
    \begin{align*}
     \begin{aligned}
         -2\beta &\innp{D_t\exponinv{\gamma(t)}(x), V(t)} = 2\beta  \innp{\Hess \Phi_p(\gamma(t))[\gamma'(t)], V(t)}\\
         & = 2\beta  \innp{(\ \ \Hess \Phi_p(\gamma(t))-\frac{\xiOnly+\delta{D}}{2}I \ \ )[\gamma'(t)], V(t)} + \beta \innp{(\xiOnly+\delta{D})\gamma'(t), V(t)}\\
         & \leq 2\norm{\Hess \Phi_p(\gamma(t))-\frac{\xiOnly+\delta{D}}{2}I}\cdot\norm{\gamma'(t)}\cdot\norm{V(t)} +\beta \innp{(\xiOnly+\delta{D})\gamma'(t), V(t)}\\
         & \leq 2\frac{\xiOnly-\delta{D}}{2}\norm{\gamma'(t)}\cdot\norm{V(t)} +\beta \innp{(\xiOnly+\delta{D})\gamma'(t), V(t)}\\
         & \circled{1}[=] 2\frac{\xiOnly-\delta{D}}{2}\norm{z^x}\cdot\norm{a + (1-t)z^x} +\beta (\xiOnly+\delta{D})\innp{z^x, a + (1-t)z^x}\\
     \end{aligned}
    \end{align*}
    Gauss lemma is used in the last summand of $\circled{1}$. Now, if $\beta = -1$, we have
    \begin{align}\label{ineq:aux2_quadratics}
     \begin{aligned}
         -2 &\innp{D_t\exponinv{\gamma(t)}(x), V(t)} \geq  -2\frac{\xiOnly-\delta{D}}{2}\norm{z^x}\cdot\norm{a + (1-t)z^x} + (\xiOnly+\delta{D})\innp{z^x, a + (1-t)z^x}\\
         & \circled{1}[\geq] -\frac{\xiOnly-\delta{D}}{2(1-t)} (\norm{(1-t)z^x}^2+\norm{a + (1-t)z^x}^2) +(\xiOnly-\delta{D})\innp{z^x, a + (1-t)z^x} -2\delta{D}\innp{-z^x, a+(1-t)b}\\
         & \geq -\frac{\xiOnly-\delta{D}}{2(1-t)} (\norm{a}^2 +2\innp{a +(1-t)z^x}) +(\xiOnly-\delta{D})\innp{z^x, a} -2\delta{D}\innp{-z^x, a+(1-t)b}\\
         & \geq -\frac{\xiOnly-\delta{D}}{2(1-t)} \norm{a}^2  -2\delta{D}\innp{D_t V(t), V(t)}.\\
     \end{aligned}
    \end{align}
    On the other hand, analogously, if $\beta = 1$, we have
    \begin{align}\label{ineq:aux3_quadratics}
     \begin{aligned}
         -2 &\innp{D_t\exponinv{\gamma(t)}(x), V(t)} \leq  2\frac{\xiOnly-\delta{D}}{2}\norm{z^x}\cdot\norm{a + (1-t)z^x} + (\xiOnly+\delta{D})\innp{z^x, a + (1-t)z^x}\\
         & \circled{1}[\leq] \frac{\xiOnly-\delta{D}}{2(1-t)} (\norm{(1-t)z^x}^2+\norm{a + (1-t)z^x}^2) -(\xiOnly-\delta{D})\innp{z^x, a + (1-t)z^x} -2\xiOnly\innp{-z^x, a+(1-t)b}\\
         & \leq \frac{\xiOnly-\delta{D}}{2(1-t)} (\norm{a}^2 +2\innp{a +(1-t)z^x}) -(\xiOnly-\delta{D})\innp{z^x, a} -2\xiOnly\innp{-z^x, a+(1-t)b}\\
         & \leq \frac{\xiOnly-\delta{D}}{2(1-t)} \norm{a}^2  -2\xiOnly\innp{D_t V(t), V(t)},
     \end{aligned}
    \end{align}
    where $\circled{1}$ is Young's inequality $2cd \leq c^2 + d^2$. Combining \eqref{ineq:aux1_quadratics}, \eqref{ineq:aux2_quadratics}, \eqref{ineq:aux3_quadratics}, we obtain
    \[
        -\frac{\xiOnly-\delta{D}}{2(1-t)} \norm{a}^2 -2(\delta{D}-1)\innp{D_t V(t), V(t)} \leq \frac{d}{dt}w(t) \leq \frac{\xiOnly-\delta{D}}{2(1-t)} \norm{a}^2 -2(\xiOnly-1)\innp{D_t V(t), V(t)}. \\
    \] 
    Integrating between $0$ and $r<1$, it results in
    \begin{align*}
     \begin{aligned}
         \frac{\xiOnly-\delta{D}}{2}&\log(1-r) \norm{a}^2 -(\delta{D}-1)(\norm{V(r)}^2-\norm{V(0)}^2) \leq w(r) -w(0)\\
         &\leq -\frac{\xiOnly-\delta{D}}{2}\log(1-r) \norm{a}^2 -(\xiOnly-1)(\norm{V(r)}^2-\norm{V(0)}^2). \\
     \end{aligned}
    \end{align*}
    Using the bound $-\log(1-r) \leq \frac{r}{1-r}$ for $r\in [0,1)$ and using the values of $w(\cdot)$ and $V(\cdot)$, we obtain the result.
\end{proof}

\begin{corollary}\label{corol:moving_quadratics:exact_approachment_recession}
    Let $x, y, p \in \M$ be the vertices of a uniquely geodesic triangle of diameter $D$, and let $z^x \in \Tansp{x}\M$, $z^y \defi \Gamma{x}{y}(z^x) + \exponinv{y}(x)$, such that $y = \expon{x}(rz^x)$ for some $r\in[0,1)$. Then, the following holds
    \[
        \norm{z^y-\exponinv{y}(p)}^2 + (\delta{D}-1)\norm{z^y}^2 \geq \norm{z^x-\exponinv{x}(p)}^2 + (\delta{D}-1)\norm{z^x}^2,
    \] 
and
    \[
         \norm{z^y-\exponinv{y}(p)}^2 + (\zeta{D}-1)\norm{z^y}^2 \leq \norm{z^x-\exponinv{x}(p)}^2 + (\zeta{D}-1)\norm{z^x}^2.
    \] 
\end{corollary}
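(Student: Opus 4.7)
The plan is to obtain this corollary as the immediate specialization of \cref{lemma:moving_quadratics:inexact_approachment_recession} to the case $a^y = 0$. Under this substitution, $a^x = \Gamma{y}{x}(a^y) = 0$, so the term $\frac{\xiOnly - \delta{D}}{2}\left(\frac{r}{1-r}\right)\norm{a^x}_x^2$ that appears on the right-hand sides of both inequalities in the lemma vanishes identically. The two inequalities of the lemma then collapse, respectively, to the two inequalities stated in the corollary.

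Concretely, for the lower-bound inequality of the corollary I would apply the first inequality of \cref{lemma:moving_quadratics:inexact_approachment_recession} with any admissible $\xiOnly \geq \zeta{D}$; the choice is immaterial because its coefficient multiplies $\norm{a^x}^2 = 0$. For the upper-bound inequality of the corollary I would apply the second inequality of the lemma with the extremal admissible choice $\xiOnly = \zeta{D}$, which simultaneously matches the coefficient $(\zeta{D}-1)$ appearing on both sides of the claim and kills the already-zero error term.

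The only conditions to verify before invoking the lemma are those of its hypothesis, all of which are already present in the statement of the corollary: $x, y, p$ form a uniquely geodesic triangle of diameter $D$; the vector $z^y$ is defined as the parallel transport of $z^x$ plus $\exponinv{y}(x)$; and $y = \expon{x}(r z^x)$ with $r \in [0,1)$. Thus no further geometric estimate is needed here.

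There is no real obstacle, since the substantive analytic work (bounding the derivative of the potential $w(t) = \norm{\exponinv{\gamma(t)}(x) - V(t)}^2$ along the geodesic from $x$ to $y$ using the Hessian bounds in \cref{fact:hessian_of_riemannian_squared_distance}) is already carried out in the proof of the parent lemma. The corollary is essentially a bookkeeping observation that isolating the pure ``translation of quadratic'' phenomenon, without the additional vectors $a^x, a^y$, eliminates the geometric penalty term and yields the clean asymmetric pair of bounds with constants $\delta{D}$ and $\zeta{D}$.
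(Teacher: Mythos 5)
Your proposal is correct and is essentially identical to the paper's own proof, which likewise obtains the corollary by invoking \cref{lemma:moving_quadratics:inexact_approachment_recession} with $a^y = 0$ (hence $a^x = 0$), killing the penalty term and taking $\xiOnly = \zeta{D}$ for the second inequality. The only thing the paper adds beyond your argument is a remark that the conclusion extends to $r=1$ by continuity, which is not needed for the statement as written but is used later when deriving the cosine-law inequalities.
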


\begin{proof}
    Use \cref{lemma:moving_quadratics:inexact_approachment_recession} with $a^y = 0$. Note that this corollary allows $r=1$ as well. We obtain this result, by continuity, by taking a limit when $r \to 1$.
\end{proof}

The following is a lemma that is already known and is used extensively in Riemannian first-order optimization. It turns out it is a special case of \cref{corol:moving_quadratics:exact_approachment_recession}.
\begin{corollary}[Cosine-Law Inequalities]\label{lemma:cosine_law_riemannian}
    For the vertices $x, y, p \in \M$ of a uniquely geodesic triangle of diameter $D$, we have
    \[
        \innp{\exponinv{x}(y), \exponinv{x}(p)} \geq \frac{\delta{D}}{2} \dist(x,y)^2 + \frac{1}{2}\dist(p, x)^2 - \frac{1}{2}\dist(p, y)^2.
    \] 
    and
    \[
        \innp{\exponinv{x}(y), \exponinv{x}(p)} \leq \frac{\zeta{D}}{2} \dist(x,y)^2 + \frac{1}{2}\dist(p, x)^2 - \frac{1}{2}\dist(p, y)^2
    \] 
\end{corollary}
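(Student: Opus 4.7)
The plan is to obtain both inequalities as a direct specialization of Corollary~\ref{corol:moving_quadratics:exact_approachment_recession}, by choosing $z^x \defi \exponinv{x}(y)$. With this choice, the condition $y = \expon{x}(r z^x)$ is satisfied at $r = 1$, which is the limiting case that the statement of Corollary~\ref{corol:moving_quadratics:exact_approachment_recession} explicitly allows.

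The key simplification comes from identifying $z^y$ under this specialization. Since $z^x = \exponinv{x}(y)$ is precisely (up to normalization) the initial tangent vector of the unique geodesic from $x$ to $y$, parallel transporting it along that same geodesic yields the terminal tangent vector, which is $-\exponinv{y}(x)$. Hence
\[
    z^y = \Gamma{x}{y}(z^x) + \exponinv{y}(x) = -\exponinv{y}(x) + \exponinv{y}(x) = 0,
\]
so both $(\delta{D} - 1)\norm{z^y}^2$ and $(\zeta{D}-1)\norm{z^y}^2$ drop out of the two inequalities of Corollary~\ref{corol:moving_quadratics:exact_approachment_recession}, while $\norm{z^y - \exponinv{y}(p)}^2 = \dist(y,p)^2$.

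It then suffices to expand the remaining squared norm on the right-hand side as
\[
    \norm{z^x - \exponinv{x}(p)}^2 = \dist(x,y)^2 - 2\innp{\exponinv{x}(y), \exponinv{x}(p)} + \dist(x,p)^2,
\]
combine the $-\dist(x,y)^2$ term with $(\delta{D}-1)\dist(x,y)^2$ (resp.\ $(\zeta{D}-1)\dist(x,y)^2$) to produce $\delta{D} \dist(x,y)^2$ (resp.\ $\zeta{D}\dist(x,y)^2$) after collecting terms, and rearrange to isolate $\innp{\exponinv{x}(y), \exponinv{x}(p)}$ on the left. The lower bound of Corollary~\ref{corol:moving_quadratics:exact_approachment_recession} yields the $\delta{D}$-inequality and the upper bound yields the $\zeta{D}$-inequality. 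The only step requiring any thought is the parallel-transport identity $\Gamma{x}{y}(\exponinv{x}(y)) = -\exponinv{y}(x)$; everything else is bookkeeping on squared-norm expansions.
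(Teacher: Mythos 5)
Your proposal is correct and is essentially the paper's own proof: the paper likewise specializes \cref{corol:moving_quadratics:exact_approachment_recession} at $r=1$ with $z^x = \exponinv{x}(y)$ (hence $z^y = 0$), expands $\norm{z^x - \exponinv{x}(p)}^2$, and rearranges. Your only addition is to spell out the parallel-transport identity $\Gamma{x}{y}(\exponinv{x}(y)) = -\exponinv{y}(x)$ justifying $z^y = 0$, which the paper asserts without comment.
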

\begin{proof}
    This is \cref{corol:moving_quadratics:exact_approachment_recession} for $r=1$. Indeed, given $y\in\Tri$ we can use \cref{corol:moving_quadratics:exact_approachment_recession} with $z^x = \exponinv{x}(y)$. Note that in such a case we have $\norm{z^x} =\dist(x,y)$ and $z^y = 0$. Using $\norm{\exponinv{y}(p)} = \dist(y, p)$ and 
\begin{align*}
 \begin{aligned}
     \norm{z^x-\exponinv{x}(p)} &= \norm{z^x}^2 - \innp{z^x, \exponinv{x}(p)} + \norm{\exponinv{x}(p)}^2 \\
     &= \dist(x, y)^2 -  2\innp{\exponinv{x}(y), \exponinv{x}(p)} + \dist(p, x)^2,
 \end{aligned}
\end{align*}
    we obtain the result.
\end{proof}
\begin{remark}\label{remark:tighter_cosine_inequality}
    Actually, in Hadamard manifolds, if we substitute the constants $\delta{D}$ and $\zeta{D}$ in the previous \cref{lemma:cosine_law_riemannian} by the tighter constants $\delta{\dist(p,x)}$ and $\zeta{\dist(p,x)}$, the result also holds. See \citep{zhang2016first}.
\end{remark}

We now proceed to prove a lemma that intuitively says that solving the exact proximal point problem can be used to lower bound $\f$. Compare the result of the following lemma with the Euclidean equality $\innp{g, p-y} = \innp{g, p-x} + \norm{g}^2$, for $g = x-y$ and $x, y, p \in \R^n$. 

\begin{lemma}\label{lemma:moving_hyperplanes:exact_approachment_recession}
    Let $x, y, p \in \M$ be the vertices of a uniquely geodesic triangle of diameter $D$. Define the vectors $g\defi \exponinv{y}(x) \in \Tansp{y}\M$ and $g^x = \Gamma{y}{x}(g) = -\exponinv{x}(y)\in \Tansp{x}\M$. Then we have
    \[
        \innp{g, \exponinv{y}(p)} \geq \innp{g^x, \exponinv{x}(p)} + \delta{D} \norm{g}^2,
    \] 
    and 
    \[
        \innp{g, \exponinv{y}(p)} \leq \innp{g^x, \exponinv{x}(p)} + \zeta{D} \norm{g}^2.
    \] 
\end{lemma}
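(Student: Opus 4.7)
The plan is to derive both inequalities directly from the two cosine-law inequalities of \cref{lemma:cosine_law_riemannian}, applied once at the vertex $y$ (to extract $\innp{g,\exponinv{y}(p)}$) and once at the vertex $x$ (to extract $\innp{\exponinv{x}(y),\exponinv{x}(p)}=-\innp{g^x,\exponinv{x}(p)}$). The key observation is that, when one applies the lower cosine-law bound to one inner product and the upper cosine-law bound to the other, the two quadratic distance terms $\tfrac12\dist(p,y)^2$ and $\tfrac12\dist(p,x)^2$ appear with opposite signs in both expressions and therefore cancel upon subtraction, leaving only the geometric constant times $\norm{g}^2$.

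Concretely, for the first inequality I would first apply the lower bound of \cref{lemma:cosine_law_riemannian} with the roles of $x$ and $y$ interchanged (so that the triangle has apex $y$ with legs to $x$ and $p$), yielding
\[
    \innp{g, \exponinv{y}(p)}
    \;=\;\innp{\exponinv{y}(x), \exponinv{y}(p)}
    \;\geq\; \tfrac{\delta{D}}{2}\norm{g}^2 + \tfrac12\dist(p,y)^2 - \tfrac12\dist(p,x)^2.
\]
Then I would apply the lower cosine-law bound at $x$ in its original form and negate,
\[
    \innp{g^x, \exponinv{x}(p)}
    \;=\;-\innp{\exponinv{x}(y), \exponinv{x}(p)}
    \;\leq\; -\tfrac{\delta{D}}{2}\norm{g}^2 - \tfrac12\dist(p,x)^2 + \tfrac12\dist(p,y)^2,
\]
and subtract the second display from the first; the distance terms cancel and one obtains exactly $\innp{g, \exponinv{y}(p)} - \innp{g^x, \exponinv{x}(p)} \geq \delta{D}\norm{g}^2$.

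For the second inequality the argument is symmetric: apply the upper cosine-law bound at $y$ (with $x$ and $y$ interchanged) to $\innp{g,\exponinv{y}(p)}$, apply the upper cosine-law bound at $x$ to $\innp{\exponinv{x}(y),\exponinv{x}(p)}$ and negate to get a lower bound on $\innp{g^x,\exponinv{x}(p)}$, and subtract. The same cancellation yields $\innp{g,\exponinv{y}(p)} - \innp{g^x,\exponinv{x}(p)} \leq \zeta{D}\norm{g}^2$. There is no real obstacle here once one notices that $g$ and $g^x$ are parallel transports of each other along the geodesic from $y$ to $x$, so $\norm{g}=\norm{g^x}=\dist(x,y)$, and that both legs of the triangle used by the cosine law have the same length $\dist(x,y)$ regardless of which vertex we expand from; the proof is essentially a bookkeeping combination of two applications of \cref{lemma:cosine_law_riemannian}.
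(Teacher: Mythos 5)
Your proposal is correct and takes essentially the same route as the paper: both apply \cref{lemma:cosine_law_riemannian} twice, once at the vertex $y$ and once at the vertex $x$ (the $\delta{D}$ bounds for the first inequality, the $\zeta{D}$ bounds for the second), and combine so that the $\tfrac12\dist(p,x)^2$ and $\tfrac12\dist(p,y)^2$ terms cancel, using $\norm{g}=\norm{g^x}=\dist(x,y)$. The only difference is cosmetic bookkeeping — you subtract the two cosine-law displays, while the paper substitutes one into the other.
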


\begin{proof}\linkofproof{lemma:moving_hyperplanes:exact_approachment_recession}
    Using the definition of $g$, we have $\circled{1}$ below, by the first part of \cref{lemma:cosine_law_riemannian}:
    \begin{align*}
     \begin{aligned}
         \innp{g, \exponinv{y}(p)} &\circled{1}[\geq] \frac{\delta{D}}{2}\norm{g}^2 + \frac{\dist(y, p)^2}{2} - \frac{\dist(x, p)^2}{2} \\
         & \circled{2}[\geq] \innp{g^x, \exponinv{x}(p)} +\delta{D}\norm{g^x}^2,
     \end{aligned}
    \end{align*}
    and in $\circled{2}$ we used \cref{lemma:cosine_law_riemannian} again but with a different choice of vertices so we have $\frac{\dist(y, p)^2}{2} \geq \frac{\delta{D}}{2} \norm{g^x}^2 + \frac{\dist(x, p)^2}{2} + \innp{g^x, \exponinv{x}(p)}$.

    The proof of the second part is analogous: using the definition of $g$, we have $\circled{1}$ below, by the second part of \cref{lemma:cosine_law_riemannian}:
    \begin{align*}
     \begin{aligned}
         \innp{g, \exponinv{y}(p)} &\circled{1}[\leq] \frac{\zeta{D}}{2}\norm{g}^2 + \frac{\dist(y, p)^2}{2} - \frac{\dist(x, p)^2}{2} \\
         & \circled{2}[\leq] \innp{g^x, \exponinv{x}(p)} +\zeta{D}\norm{g^x}^2,
     \end{aligned}
    \end{align*}
    and in $\circled{2}$ we used \cref{lemma:cosine_law_riemannian} again but with a different choice of vertices so we have $\frac{\dist(y, p)^2}{2} \leq  \frac{\zeta{D}}{2} \norm{g^x}^2+ \frac{\dist(x, p)^2}{2} + \innp{g^x, \exponinv{x}(p)}$.
\end{proof}

\end{document}